\documentclass[12pt]{amsart}

\usepackage[usenames,dvipsnames,svgnames,table]{xcolor}

\usepackage{xargs,url,hyperref}
\usepackage[colorinlistoftodos,prependcaption,textsize=tiny,linecolor=red,backgroundcolor=red!25,bordercolor=red]{todonotes}
\setlength{\marginparwidth}{2cm}
\newcommandx{\kolja}[2][1=]{\todo[linecolor=brown,backgroundcolor=red!25,bordercolor=red,#1]{#2 ---Kl}}
\newcommandx{\caro}[2][1=]{\todo[linecolor=purple,backgroundcolor=brown!25,bordercolor=brown,#1]{#2 ---C}}
\newcommandx{\todocite}[2][1=]{\todo[linecolor=green,backgroundcolor=green!25,bordercolor=green,#1]{cite: #2}}


\newtheorem{theorem}{Theorem}
\newtheorem{proposition}[theorem]{Proposition}
\newtheorem{observation}[theorem]{Observation}
\newtheorem{question}[theorem]{Question}
\newtheorem{corollary}[theorem]{Corollary}
\newtheorem{lemma}[theorem]{Lemma}
\theoremstyle{definition}
\newtheorem{remark}[theorem]{Remark}
\newtheorem{example}[theorem]{Example}
\newtheorem{definition}[theorem]{Definition}
\newtheorem{conjecture}[theorem]{Conjecture}
\newtheorem{problem}[theorem]{Problem}


\frenchspacing


\usepackage{amssymb}
\usepackage{amsmath}
\usepackage{amsthm}
\usepackage{amsfonts}
\usepackage{amscd}
\usepackage{mathtools}
\usepackage{mathrsfs}
\usepackage[mathscr]{eucal}

\usepackage{listings, lstautogobble}
\lstset{basicstyle=\color{ForestGreen}\small\ttfamily, autogobble=True}
\lstnewenvironment{sagecode}{}{}

\usepackage{vmargin}
\setpapersize{USletter}
\setmargrb{2.5cm}{2cm}{2.5cm}{2.cm}
\hfuzz1.5pc

\usepackage{version}
\usepackage{cite}
\excludeversion{sagecode}

\usepackage[inline]{enumitem}

\usetikzlibrary{arrows,shapes}


\newcommand{\Gr}{\mathrm{Gr}}

\newcommand{\rank}{\operatorname{rank}}

\definecolor{cof}{RGB}{219,144,71}
\definecolor{pur}{RGB}{186,146,162}
\definecolor{greeo}{RGB}{91,173,69}
\definecolor{greet}{RGB}{52,111,72}

\newcommandx\conv{\operatorname{conv}}

\title[LPMs and quotients]{Lattice path matroids and quotients}

\author[Benedetti, Knauer]{Carolina Benedetti-Vel\'asquez \and Kolja Knauer}

\address[C. Benedetti]{
Departamento de Matemáticas, Universidad de los Andes, Bogotá, Colombia. Email: \href{mailto:c.benedetti@uniandes.edu.co}{c.benedetti@uniandes.edu.co}}

\address[K. Knauer]{
  Departament de Matem\`atiques i Inform\`atica -- Universitat de Barcelona
  \& Centre de Recerca Matemàtica -- Spain \\
  LIS, Aix-Marseille Universit\'e, CNRS, and Universit\'e de Toulon -- France.
  Email: \href{mailto:kolja.knauer@ub.edu}{kolja.knauer@ub.edu}
}
    
\begin{document}

\begin{abstract}
We characterize the quotients among lattice path matroids (LPMs) in terms of their diagrams. This characterization allows us to show that ordering LPMs by quotients yields a graded poset, whose rank polynomial has the Narayana numbers as coefficients. 

Furthermore, we study full lattice path flag matroids and show that -- contrary to 
arbitrary positroid flag matroids -- they correspond to points in the nonnegative flag variety. At the basis of this result lies an identification of certain intervals of the strong Bruhat order with lattice path flag matroids. 

A recent conjecture of Mcalmon, Oh, and Xiang states a characterization of quotients of positroids. We use our results to prove this conjecture in the case of LPMs.

\end{abstract}

\maketitle
\section{Introduction}

Matroids, introduced independently by Whitney~\cite{W34} and Nakasawa~\cite{NK09}, around 1930, are an abstraction of the concept of linear independence from linear algebra, carried to other settings such as graphs, systems of distinct representatives, transcendental extensions of fields, etc. This paper focuses on a class of matroids called \emph{representable} as defined in Section~\ref{subsec:matroids}. The family of representable matroids we are particularly interested in are  \emph{positroids}. Positroids appear in the work of da Silva from the perspective of oriented matroids (see \cite{dS87}, \cite{ARW17}), then by Blum~\cite{B07} in terms of Koszulness of rings associated to a matroid. Finally, Postnikov~\cite{P06} introduced positroids via a stratification of the totally nonnegative Grassmannian. This latter point of view is  the one that has spiked most of the research related to positroids, in particular, since part of the work of Postnikov includes several combinatorial characterizations of them. 

A categorical view point on matroids leads to the notion  of quotients, see~\cite{HP18,H68,Bry-86}. Matroid quotients are part of standard text books such as~\cite{O11} and natural appearances can be found in linear algebra and graph theory.
For instance, out of a graph one can construct a quotient after identifying some vertices. 
Despite the several ways that there are to define quotients, it can be very difficult to determine the quotients of a general matroid, and even worse, to characterize quotients for a given family of matroids.

The present paper focuses on a family of positroids called \emph{lattice path matroids}, LPMs for short. We provide an answer to the question: \begin{quote}
 Given two lattice path matroids $M$ and $M'$ on the same ground set, how can we determine combinatorially if $M$ is a quotient of $M'$?
                                                                                                                                                                                                                                                    \end{quote}
 
Any LPM  can be thought of as a diagram in the plane grid as in Figure~\ref{fig:xmpl}. Such a diagram is bounded above by a monotone lattice path $U$ and below by a non-crossing monotone lattice path $L$. Any monotone lattice path from the bottom left to the upper right corner inside this diagram is identified with a set $B$, where $i\in B$ if and only if the $i$th step of the path is  North. Now, the set $\mathcal{B}$ of these sets forms the set of bases of a matroid called the LPM $M[U,L]$. As a special case, a matroid is \emph{Schubert} if it is an LPM $M[U,L]$, where $U$ does all its North steps first. In particular, \emph{uniform} matroids are LPMs where furthermore $L$ does all its North steps last. Compare this with the definition of LPMs in terms of the Gale order, see Definition~\ref{def:lpm}. 

\begin{figure}[htp]
    \centering
    \includegraphics[width=.3\textwidth]{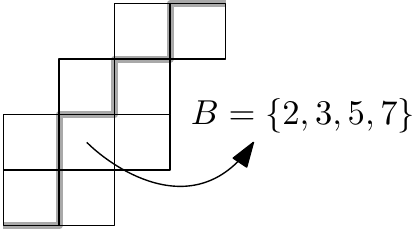}
    \caption{A basis in the diagram representing the LPM $M[1246,3568]$.}\label{fig:xmpl}
\end{figure}

LPMs were introduced by Bonin, de Mier, and Noy~\cite{Bon-03}, where fundamental properties were established. Many different aspects of lattice path matroids have been studied:  excluded minor characterizations~\cite{Bon-10}, representations over finite fields~\cite{padro2023efficient}, algebraic geometry notions~\cite{Del-12,Sch-10,Sch-11}, the Tutte polynomial~\cite{Bon-07,KMR,Mor-13}, the associated basis polytope in connection with its facial structure~\cite{An-17,Bid-12}, specific decompositions in relation with Lafforgue's work
~\cite{Cha-11,BKV23}, as well as its Ehrhart polynomial~\cite{Kna-18,Bid-12,BKV23,FJS22,Fer22}. 

The study of LPMs as a subclass of positroids, including analyzing quotients of these, is mostly novel apart from~\cite{DeM-07}, where certain quotients of LPMs related to the tennis ball problem are explored.  

One of the main contributions of this paper provides a way to determine all the quotients of a given LPM (Theorem~\ref{thm:lpm_quotients}). The advantage of this characterisation is that it allows to tell the quotients of an LPM purely based on its diagram. As a consequence of this result, we are able to build a graded poset $\mathcal P_n$ whose elements are LPMs ordered by quotients. Some enumerative results regarding $\mathcal P_n$ are stated in Corollary~\ref{cor:unimodular}, where it is shown that the rank function of $\mathcal P_n$ has as coefficients the  Narayana numbers.

A maximal sequence of distinct matroids on the same ground set, where each matroid is a quotient of the next, is a \emph{full flag matroid}, see~\cite{BGW}.
We can view full flag matroids consisting of LPMs as maximal chains in $\mathcal P_n$. Our interest in these flags, called lattice path flag matroids (LPFMs), arises from thinking of LPMs as positroids. See Section~\ref{sec:background} for the necessary background and motivation.

Positroids can be thought of as cells of the nonnegative Grassmannian. On the other hand, points in  the nonnegative flag variety $\mathcal F\ell _n^{\geq0 }$ can be thought of as certain full positroid flag matroids (PFMs)~\cite{TW15,KW15}. However, not every PFM arises this way (see Example~\ref{xmpl:postroids}). Moreover, in~\cite{TW15,KW15} the authors prove 
that the points in $\mathcal F\ell _n^{\geq0 }$  correspond to intervals in the (strong) Bruhat order. Our second main result shows that every LPFM corresponds to an interval in the Bruhat order and thus, a point in $\mathcal F\ell _n^{\geq0 }$ (Theorem~\ref{thm:iso_posets} and Corollary~\ref{cor:lpmint_bruhatint}). Moreover we characterize those intervals in the Bruhat order that come from LPFMs (Theorem~\ref{thm:quotients_asgoodpairs}). In particular, Proposition~\ref{thm:cubes_are_lpm} shows that cubes in the (right) weak Bruhat order are instances of these intervals.

%


Combining our description of LPM quotients with the fact that LPFMs are points in $\mathcal F\ell _n^{\geq0 }$ we achieve our final result Theorem~\ref{thm:suho}: the (realizable) quotient relation among LPMs can be expressed in terms of certain objects called CCW arrows in~\cite{MOX19}. This confirms a conjecture of Mcalmon, Oh, Xiang in the case of LPMs.

We finish with some structural questions on the poset structure of the set of LPMs ordered by quotiens, diagram representations of LPFMs as suggested by de Mier~\cite{DeM-07}, and Higgs lifts and the weak order on LPMs.

%
%
%

\section{Preliminaries}\label{sec:background}

%
%

\subsection{Matroids, positroids and the (real) Grassmannian}\label{subsec:matroids}

There are several equivalent ways to define matroids, see~\cite{O11}. For our purposes we say that a \emph{matroid} $M=(E,\mathcal{B})$ is a pair consisting of a finite set $E$ and a non-empty collection  $\mathcal{B}$ of subsets of $E$ that satisfies:
\begin{center}
if $A,B\in \mathcal{B}$ and $a\in A\setminus B$, then there is $b\in B\setminus A$ such that $(A\setminus \{a\})\cup \{b\}\in \mathcal{B}$.
\end{center}
In this context, we refer to the set $E$ as the ground set of $M$ and the collection $\mathcal B$ as the \emph{set of bases} of $M$. Also, an element $A\in\mathcal B$ is said to be a \emph{basis} of $M$.
Since the set $E$ has cardinality $n$, for some $n\geq 0$, we will identify it with the set $[n]:=\{1,\dots,n\}$. The \emph{uniform matroid} of rank $k$ over $[n]$, denoted $U_{k,n}$, is the matroid whose bases are all the subsets of size $k$ of $[n]$.

Given a matroid $M=([n],\mathcal B)$, it is known that elements of $\mathcal{B}$ have all the same cardinality, say $k\geq 0$, just as bases of a finite dimensional vector space have the same size.  In this case, we say that \emph{the rank} of $M$ is $k$, and we denote this as $r(M)=k$. A matroid $M=([n],\mathcal B)$ of rank $k$ is said to be \emph{representable} (over $\mathbb R$) if there exists a collection of vectors $S=\{ u_1,\dots, u_n\}\subseteq \mathbb R^k$ such that   $\dim(span(S))=k$ and $\{ i_1,\dots,i_k\}\in\mathcal B$ if and only if $\{ u_{i_1},\dots,u_{i_k}\}$ is a basis of $span(S)$. In this case, the $k\times n$ matrix whose columns are the set $S$ is said to be a \emph{(matrix) representation of $M$}. Although almost all matroids are non representable~\cite{N18}, in this paper the matroids we are interested in are the ones that are representable over $\mathbb R$. We will in the following elaborate on one of the many reasons why this class is important.


The (real) \emph{Grassmannian $\Gr_{k,n}$} consists of all the $k$-dimensional vector subspaces $V$ of $\mathbb R^n$. Let $V\in \Gr_{k,n}$ and let $\{v_1,\dots,v_k \}$ be a basis of $V$. Then the $k\times n$ matrix $A_V$ whose rows are $\{v_1,\dots,v_k \}$ gives rise to a representable matroid $M=([n], \mathcal B)$ of rank $k$ such that $B=\{ i_1,\dots,i_k\}\in\mathcal B$ if and only if $\Delta_B\neq 0$, where $\Delta_B$ is the $k\times k$ determinant of $A_V$ obtained from the columns indexed by the set $B$. 
Now let us talk about the \emph{nonnegative Grassmannian $\Gr_{k,n}^{\geq 0}$}. As a set, $\Gr_{k,n}^{\geq 0}$ consists of those $V\in \Gr_{k,n}$ for which there exists a full rank $k\times n$ matrix $A_V$, whose rows span $V$, such that every maximal minor of $A_V$ is nonnegative. The representable matroid $M=([n], \mathcal B)$ arising from such $V\in\Gr_{k,n}^{\geq 0}$, as explained before, is exactly what is called a \emph{positroid}. Note that for all maximal minors to be nonnegative, the ordering of the columns is essential, which is why a positroid is a matroid on $[n]$, where the (natural) ordering of the ground set is part of the input. Let us clarify this with an: 

\begin{example}
The matroid $P= ([4],\mathcal B)$ where $\mathcal B=\{13,14,23,24 \}$, is a positroid, since the matrix  $A_V=\begin{pmatrix}1&1&0&0\\0&0&1&1  \end{pmatrix}$ is such that each of the maximal minors indexed by the sets  $\{13,14,23,24 \}$ is positive and the remaining maximal minors are 0.  Notice that we are writing $ij$ to denote the subset $\{i,j\}$, as long as there is no confusion.  In particular, this example allows us to conclude that the subspace $V=span\langle (1,1,0,0), (0,0,1,1) \rangle$ is an element of $\Gr_{2,4}^{\geq 0}$. On the other hand, the matroid $M=([4],\mathcal B)$ where $\mathcal B=\{12,14,23,34 \}$ is representable but is not a positroid. We leave this as an exercise to the reader. Notice that the matroid $M$ corresponds to a relabelling of the elements of $P$, thus as remarked above being a positroid depends strongly on the ordering of the ground set. That is, being a positroid is in general not preserved under matroid isomorphisms.
\end{example}

We already mentioned several instances where positroids have appeared. For our purposes, the importance of positroids is that they contain the family of lattice path matroids, as will be defined in Section \ref{sec:LPMs}.
Although our treatment in the present paper is purely combinatorial, we want to emphasize that our initial interest for developing this project started from the connection between geometry and matroid theory via the Grassmanian (and its relatives), and representable matroids. 

Going back to our discussion above, let us scratch the surface of the connection that interests us between geometry and matroid theory. Several decompositions of the Grassmannian have been studied and many of them give rise to different families of representable matroids. In order to mention them we will denote by ${[n] \choose k}$ the collection of subsets $A\subset [n]$ such that $|A|=k$. 


\begin{definition}\label{def:i_gale}  
Let $A, B\in {[n] \choose k}$. We say that $A$ \emph{is smaller than }$B$ in the {Gale order} if, for every $r$ it holds that $a_r\leq b_r$, where $A=\{a_1<\cdots<a_k \}$ and $B=\{b_1<\cdots<b_k \}$. We denote this by by $A\leq_G B$.
\end{definition}

 We have discussed the cells of the Grassmanian and the particular positroid cells. Let us present two further specializations of positroid cells in terms of the Gale order. Note that these are equivalent with our definition from the Introduction and Definition~\ref{def:lpm}.

\begin{itemize}
\item \emph{Schubert cell} $\Omega_I$: Let $I\in {[n] \choose k}$. A generic point $U\in\Omega_I$ gives rise to a representable matroid $M_I=([n], \mathcal B)$ such that $B\in\mathcal B$ if and only if $I\leq_G B$. We call the matroid $M_I$ a Schubert matroid. For example, the matroid $M=([4],\{13,14,23,24,34 \})$ arises from the generic point $A=\begin{pmatrix}1&\star&\star&\star\\0&0&1&\star  \end{pmatrix}\in\Omega_{13}$, where the $\star$'s are generically chosen real numbers. That is, every pair of columns of $A$, except for $12$, is a basis of the column space of $A$.
\item \emph{Richardson cell} $\Omega_I^J$:  Let $I,J\in {[n] \choose k}$ such that $I\leq_G J$.  A generic point $U\in\Omega_I^J$ gives rise to a representable matroid $M_I^J=([n], \mathcal B)$ such that $B\in\mathcal B$ if and only if $I\leq_G B\leq_G J$. A matroid $M_I^J$ arising this way is known as a \emph{lattice path matroid} which will be denoted $M[I,J]$. In particular, every Schubert matroid is a lattice path matroid. For example, the Schubert matroid $M$ given above comes from a generic point in $\Omega_{13}^{34}$.  
\end{itemize}

\begin{remark}
Our definition of Schubert matroids is not closed
under isomorphism since it dependes on an ordering of the ground set. Definitions that do not depend on the ordering can be found in~\cite[Definition 7.5]{EHL23} and~\cite[Definition 2.20]{BS22}
These and isomorphic matroids are also known as “generalized Catalan matroids”, “shifted
matroids”, “nested matroids” and “freedom matroids” (see the discussion in~\cite[Section
4]{Bon-06}.
\end{remark}

The main goal of this paper is to describe combinatorially \emph{quotients} of lattice path matroids as will be defined shortly. The link with the previous discussion will be made via the \emph{flag variety}.

\subsection{Quotients of matroids and flag matroids}

As it goes with many concepts in matroid theory, the concept of quotient of matroids has many equivalent definitions. The interested reader is encouraged to consult, for instance, \cite{am2018flag,Bry-86,Kun86}. The definition we provide here is as follows.

\begin{definition}\cite[Prop. 7.4.7]{Bry-86}\label{def:quotient}
Consider two matroids $M$ and $M'$ on the ground set $[n]$ with base sets $\mathcal B$ and $\mathcal B'$, respectively. We say that $M'$ is a \emph{quotient} of $M$ if  for all $B\in\mathcal{B}, p\notin B$ there is $B'\in\mathcal{B'}$ such that $B'\subseteq B$ and if $B'\cup\{p\}\setminus\{q\}\in\mathcal{B'}$ then $B\cup\{p\}\setminus\{q\}\in\mathcal{B}$, for all $q\in B'$. We denote this by $M'\leq_Q M$.
\end{definition}

For example, as the reader may check we have that $U_{r,n}\leq_Q U_{s,n}$ for all $0\leq r\leq s\leq n$. Moreover, in \cite{BCT} the authors give a combinatorial way to determine some families of positroids that are a quotient of $U_{k,n}$, for any $0\leq k\leq n$. 
Observe that if $M'\leq_QM$ then $r(M')\leq r(M)$. In particular, $ r(M)= r(M')$ implies $M=M'$. On the other hand, Definition \ref{def:quotient} can be restated as follows. 
\begin{lemma}\label{lem:quotient_def}
Consider two matroids $M$ and $M'$ on the ground set $[n]$ with base sets $\mathcal B$ and $\mathcal B'$, respectively. Given $B\in\mathcal B$ and $p\in[n]\setminus B$ we set 
\begin{equation}\label{eq:f_circ}
    B_p:=\{q\in B\mid B+p-q\in\mathcal{B}\}.
\end{equation}
 Then we obtain that $M'\leq_Q M$ if and only if for all $B\in\mathcal{B}$ and $p\in[n]\setminus B$ there is $B'\in\mathcal{B'}$ such that $B'\subseteq B$ and $B'_p\subseteq B_p$. 
\end{lemma}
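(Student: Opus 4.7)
The plan is to verify that the restated condition is just a repackaging of Definition~\ref{def:quotient}; no extra matroid machinery is needed. I would fix matroids $M=([n],\mathcal{B})$ and $M'=([n],\mathcal{B}')$ and, for each pair $(B,p)$ with $B\in\mathcal{B}$ and $p\in[n]\setminus B$, compare the two conditions element by element on a candidate $B'\in\mathcal{B}'$ with $B'\subseteq B$.

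First I would unpack the two sets at play: by definition, $B'_p=\{q\in B'\mid B'\cup\{p\}\setminus\{q\}\in\mathcal{B}'\}$ and $B_p=\{q\in B\mid B\cup\{p\}\setminus\{q\}\in\mathcal{B}\}$, so $B'_p\subseteq B'$ and $B_p\subseteq B$. Next I would translate the implication in Definition~\ref{def:quotient}: the clause ``for all $q\in B'$, if $B'\cup\{p\}\setminus\{q\}\in\mathcal{B}'$ then $B\cup\{p\}\setminus\{q\}\in\mathcal{B}$'' is, by the definitions above, exactly ``for all $q\in B'$, $q\in B'_p$ implies $B\cup\{p\}\setminus\{q\}\in\mathcal{B}$''. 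The last implication has hypothesis forcing $q\in B'\subseteq B$, so its conclusion is equivalent to $q\in B_p$; hence the clause reads ``$q\in B'_p$ implies $q\in B_p$ for every $q\in B'$'', i.e.\ $B'_p\subseteq B_p$.

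Putting the two directions together, the existence of a $B'\in\mathcal{B}'$ with $B'\subseteq B$ satisfying the implication in Definition~\ref{def:quotient} is, for each pair $(B,p)$, the same as the existence of a $B'\in\mathcal{B}'$ with $B'\subseteq B$ and $B'_p\subseteq B_p$. Quantifying over all $B\in\mathcal{B}$ and $p\in[n]\setminus B$ then yields the claimed equivalence $M'\leq_Q M$ iff the reformulated condition holds.

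There is essentially no obstacle here beyond careful bookkeeping: the main point to make explicit is that the hypothesis $B'\subseteq B$ is what allows the clause ``$q\in B'$ and $B\cup\{p\}\setminus\{q\}\in\mathcal{B}$'' to be repackaged as $q\in B_p$ without any case distinction. Since both formulations use the same witness $B'$ and quantify over the same pairs $(B,p)$, the equivalence is a direct logical unfolding rather than a combinatorial argument.
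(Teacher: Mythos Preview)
Your proof is correct: the lemma is a direct logical unfolding of Definition~\ref{def:quotient}, and you have carried out that unfolding carefully, in particular making explicit that $B'\subseteq B$ is what lets you rewrite ``$q\in B'$ and $B\cup\{p\}\setminus\{q\}\in\mathcal{B}$'' as $q\in B_p$. The paper itself does not supply a proof for this lemma, treating it as an immediate restatement of the definition, so your approach is exactly the intended one.
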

Although Definition \ref{def:quotient} seems tricky to work with, as one may suspect, the notion of matroid quotient is better understood for certain families of matroids. For example, for $k\leq n$, given a full rank $k\times n$ matrix $A$ let $M_A$ be the realizable matroid on $[n]$ of rank $k$ that $A$ gives rise to. Now let $A'$ be the $i\times n$ submatrix obtained from $A$ by deleting its bottom $k-i$ rows, for some $i\in[k-1]$. Then the representable matroid $M_{A'}$ that $A'$ gives rise to, is a quotient of the matroid $M_A$. What we are interested in is a handy and combinatorial way to determine when two lattice path matroids $M'$ and $M$ on $[n]$ are such that $M'\leq_Q M$. Note that $B_p$ is the fundamental circuit of the pair of basis $B$ and element $p$ and that circuits of LPMs where characterizaed in \cite[Theorem 3.9]{Bon-06}, so in view of Lemma~\ref{lem:quotient_def} there might be another approach. However, we pursue a differented strategy. In fact, we care about giving a characterisation of \emph{flags} of LPMs. Flag matroids, their polytopes and the positivity of such flags are combinatorial objects which are in focus of mathematicians and physicists at the moment, see in particular~\cite{Bor22,BEW22,black2022flag,corey2022initial,jarra2022flag,Joswig_2023}.

\begin{definition}[\cite{BGW}]\label{def:flag_matroid}
A \emph{flag matroid} is a sequence $\mathcal F=(M_0,M_1,\ldots,M_k)$  of distinct matroids on the ground set $[n]$ such that $M_i$ is a quotient of $M_{i+1}$ for $i\in \{0,1,\dots,k-1 \}$. 
If $k=n$, then we say that $\mathcal F$ is a \emph{full} flag matroid. Each of the $M_i$'s is said to be a \emph{constituent of $\mathcal F$}. If $B_0\subseteq\cdots \subseteq B_k$ is a sequence where $B_i$ is a basis of $M_i$, we refer to it as a \emph{flag of bases in $\mathcal F$}.
If every $M_i$ is a positroid we say that  $\mathcal F$ is a  \emph{positroid flag matroid} (PFM). If every $M_i$ is an LPM  we say that  $\mathcal F$ is a   
\emph{lattice path flag matroid} (LPFM).
\end{definition}

From Definition \ref{def:flag_matroid} we remark that if $\mathcal F=(M_0,M_1,\ldots,M_k)$ is a flag matroid then $r(M_0)<r(M_1)<\cdots <r(M_k)$. In particular if the flag $\mathcal F$ is a full flag, then $M_0$ is the matroid $U_{0,n}$ and $M_n=U_{n,n}$.  For our purposes, we will only focus on full flag matroids either if in the PFM or the LPFM case.

Now we want to extend the dictionary between   $\Gr_{k,n}$ and representable matroids, given so far. 
The \emph{(real) full flag variety $\mathcal F\ell_n$} consists of sequences (flags) of vector spaces $\mathcal F: \{\mathbf{0}\}=V_0\subset V_1\subset V_2\subset\cdots\subset V_n=\mathbb R^n$ such that $V_i\in \Gr_{i,n}$ for $i=1,\dots, n$. Thus each such $\mathcal F$ can be thought of as a full rank $A_{n\times n}$ matrix whose top $j$ rows give rise to a representable matroid $M_j$ of rank $j$. Therefore, the point  $\mathcal F\in\mathcal F\ell_n$ gives rise to the full flag matroid $\mathcal F=(M_0,M_1,\ldots,M_n)$.
In this case $\mathcal F$ is said to be a \emph{representable flag matroid} (over $\mathbb{R}$), and $A$ \emph{represents} the flag matroid $\mathcal F$.
However, even if two representable matroids $M$ and $M'$ are such that $M'\leq_Q M$, they do not necessarily form (part of) a representable  flag matroid. This is, there may be no matrix $A$ that gives rise to both of them, simultaneously (see~\cite[Section 1.7.5]{BGW} or~\cite[Example 6.9]{am2018flag}).

Finally, the \emph{nonnegative full flag variety $\mathcal F\ell_{n}^{\geq 0}$} consists of sequences  $\mathcal F: \{0\}=:V_0\subset V_1\subset V_2\subset\cdots\subset V_n=\mathbb R^n$ of vector spaces such that $\mathcal F$ can be given by a full rank $A_{n\times n}$ matrix whose top $j$ rows span $V_j$ as a point in $\Gr_{j,n}^{\geq 0}$ for each $j\in[n]$. That is, $A$ is such that each submatrix $A_j$ has nonnegative maximal minors and its row-space spans $V_j$, for each $j\in[n]$. In this case we say that $\mathcal F$ is \emph{nonnegatively representable}.
The following problems are in order:

\begin{itemize}
\item[(P1)] Does every full positroid flag matroid $\mathcal F$ come from a point in $\mathcal F\ell_{n}^{\geq 0}$? 
\item[(P2)] Does every full lattice path flag matroid $\mathcal F$ come from a point in $\mathcal F\ell_{n}^{\geq 0}$? 
\item[(P3)] Can we describe the family of flag matroids coming from points in $\mathcal F\ell_{n}^{\geq 0}$?
\end{itemize}


From now on when we refer to a flag matroid (of any kind) we mean a full flag matroid. Thus, LPFMs refer to full flags of LPMs, and similarly for PFM. Now, if the answer to problem P1 were affirmative, then P2 would be as well, since the family of  LPFMs is a subset of the family of  PFMs.  The discussion we have conveyed here is summarized in  Table~\ref{table:grass_matroid}. 

In this paper, we will see that the answer to problem P2 is yes. Now let us illustrate why the answer to P1 is negative. This makes P3 relevant as one may be misled into thinking that points in $\mathcal F\ell_{n}^{\geq 0}$? are precisely flags of positroids.

\begin{table}[h]
\begin{tabular}{|c|c|}
\hline
\textbf{Geometry} & \textbf{Matroids }  \\
\hline
\hline
Point $V$ in $\Gr_{k,n}$ & Representable matroid $M=([n],\mathcal B)$ of rank $k$\\
\hline
Richardson cell $\Omega_I^J$ & Lattice path matroid $M[I,J]$\\
\hline
Point $V$ in $\Gr^{\geq 0}_{k,n}$  & Positroid $M=([n],\mathcal B)$ of rank $k$\\
\hline
\hline
Flag $F:V_0\subset\cdots\subset V_n$ in $\mathcal F\ell_n$  & Representable flag matroid $M_0\leq_Q\cdots\leq_Q M_n$\\
\hline

Flag $F:V_0\subset\cdots\subset V_n$ in $\mathcal F\ell^{\geq 0}_n$&  (P3)\\
\hline

(P2) & lattice path flag matroid $M_0\leq_Q M_1\leq_Q\dots \leq_Q M_n$\\
\hline
(P1)  & positroid flag matroid $M_0\leq_Q M_1\leq_Q\cdots\leq_Q M_n$\\
\hline
\end{tabular}
\caption{\label{table:grass_matroid}Bridge between geometry and realizable matroids.}
\end{table}

\begin{example}\label{xmpl:postroids} 
Let $M_1$ be the positroid on $[3]$ whose set of bases is $\mathcal B_1=\{ 1,3\}$ and let $M_2=U_{2,3}$ be the uniform matroid of rank 2 on $[3]$. That is, the bases of  $M_2$ are $\mathcal B_2=\{12,13,23 \}$. We leave to the reader the task to check that $M_1$ and $M_2$ are positroids\footnote{In fact every uniform matroid is a positroid.} and that $M_1\leq_Q M_2$. Thus the flag $\mathcal F:U_{0,3}\leq_Q M_1\leq_Q M_2\leq_Q U_{3,3}$ is a PFM. If $\mathcal F$ came from an element in $\mathcal F\ell_n^{\geq 0}$ then there would be a $3\times 3$ matrix  $$
A=\begin{pmatrix}
a&0&b\\
c&d&e\\
f&g&h.
\end{pmatrix}$$ such that $\det A>0$ and also the submatrices $\begin{pmatrix}a&0&b\end{pmatrix}$ and $\begin{pmatrix}a&0&b\\c&d&e\end{pmatrix}$ would be a representation of the positroids $M_1$ and $M_2$, respectively. This forces $a>0$ and $b>0$ since $\mathcal B_1=\{ 1,3\}$. On the other hand, since $12\in\mathcal B_2$ then $ad>0$ and thus $d>0$. Similarly, since $23\in\mathcal B_2$ then $-bd>0$ and thus $d<0$ which is a contradiction. Thus, we are not able to obtain the PFM $\mathcal F:U_{0,3}\leq_Q M_1\leq_Q M_2\leq_Q U_{3,3}$ as coming from a point in $\mathcal F\ell_n^{\geq 0}$. 
\end{example}

It is known that every uniform matroid $U_{k,n}$ is an LPM and, as mentioned above, in \cite{BCT} the authors give a partial characterization of positroids $M$ such that $M\leq_Q U_{k,n}$. Here, we are interested in particular in a description of those LPMs $M'$ such that given an LPM $M$ it follows that $M'\leq_Q M$. Thus, a complete answer to this question, which we will give, does not imply the aforementioned result in \cite{BCT} since some quotients of $U_{k,n}$ are not LPMs.

To our knowledge it is open whether every positroid flag matroid corresponds to a point in the flag-variety. 

\section{Quotients of LPMs}\label{sec:LPMs}
Let $B',B\in {[n] \choose k}$. We say that $B'$ \emph{is smaller than }$B$ in the Gale order if $b_i'\leq b_i$, for all $i\in[k]$, where $B'=b'_1<\cdots< b'_k$ and $B=b_1<\cdots< b_k$, for some $k\leq n$. We denote this by $B'\leq_G B$.
In view of this, let us recall the definition of lattice path matroid. 

\begin{definition}\label{def:lpm}
Let $0\leq k\leq n$ and let $U,L\in {[n] \choose k}$ be such that $U\leq_G L$. The \emph{lattice path matroid $M[U,L]$} is the matroid over the set $[n]$ whose collection of bases is given by $\mathcal B=\{B\in {[n] \choose k}\,|\, U\leq_G B\leq_G L \}$. 
\end{definition}

Setting $M=M[U,L]$ in Definition \ref{def:lpm} it follows that $M$ has rank $k$. In particular $U$ and $L$ are bases of $M$. Generally we fix notation by setting $U=\{u_1<\cdots<u_k \}$ and $L=\{\ell_1<\cdots<\ell_k \}$. Then $U$ corresponds to the lattice path from $(0,0)$ to $(k,n-k)$ whose North steps are labelled by $U$, and similarly for $L$. Thus, if $B$ is any basis of $M$ then $B$ corresponds to a lattice path from $(0,0)$ to $(n-k,k)$ whose labels are in $B$ and $B$ lies between $U$ and $L$ since $U\leq_G B\leq_G L$ . See Figure~\ref{fig:xmpl}, where the basis $\{2,3,5,7\}$ of $M[1246,3568]$ is represented in the diagram.
The following is derived from the very definition of $\leq_G$.

\begin{observation}\label{obs:weak} Let $0\leq k\leq n$ and let $M[U,L]$ be an LPM of rank $k$ over $[n]$. The Gale order endows the set of bases of $M[U,L]$ with the poset structure of the interval $[U,L]_G$ of the bases of $U_{k,n}$ ordered by $\leq_G$. 
\end{observation}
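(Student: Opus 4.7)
The plan is to observe that the statement is essentially an unpacking of Definition~\ref{def:lpm} together with the definition of an interval in the Gale poset. By Definition~\ref{def:lpm}, the bases of $M[U,L]$ are by construction the set
\[
\mathcal B = \{ B \in \tbinom{[n]}{k} \mid U \leq_G B \leq_G L \}.
\]
Since every $k$-subset of $[n]$ is a basis of the uniform matroid $U_{k,n}$, the poset of bases of $U_{k,n}$ ordered by $\leq_G$ is nothing other than $\binom{[n]}{k}$ equipped with the Gale order. By the very definition of the interval $[U,L]_G$ in this poset, one has $[U,L]_G = \mathcal B$ as sets.

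It remains to check the claim at the level of posets, namely that the restriction of $\leq_G$ from $\binom{[n]}{k}$ to $\mathcal B$ agrees with the induced subposet structure on $[U,L]_G$. This is immediate from the general fact that for any poset $P$ and any interval $[x,y] \subseteq P$, the subposet $[x,y]$ (ordered by the restriction of $\leq_P$) is exactly the induced subposet on the underlying set $\{z \in P \mid x \leq z \leq y\}$.

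Thus the proof is in essence a one-line identification of two objects defined by the same condition. The only ``obstacle'' is purely notational: making sure that the Gale order on bases of $U_{k,n}$, the Gale order on $\binom{[n]}{k}$, and the Gale interval $[U,L]_G$ are all identified consistently. No combinatorial argument beyond this is needed.
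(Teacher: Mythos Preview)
Your proposal is correct and matches the paper's treatment: the paper does not give a proof at all, stating only that the observation ``is derived from the very definition of $\leq_G$.'' Your unpacking of Definition~\ref{def:lpm} together with the identification of the bases of $U_{k,n}$ with $\binom{[n]}{k}$ is exactly the tautological check the authors had in mind.
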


Observation~\ref{obs:weak} in particular yields that ordering the bases of an LPM by $\leq_G$ endows the set $\mathcal{B}$ of bases with a distributive lattice structure, that has been characterized in~\cite{Kna-18}. See Figure~\ref{fig:lattice} for an example.

\begin{figure}[htp]
    \centering
    \includegraphics[width=.6\textwidth]{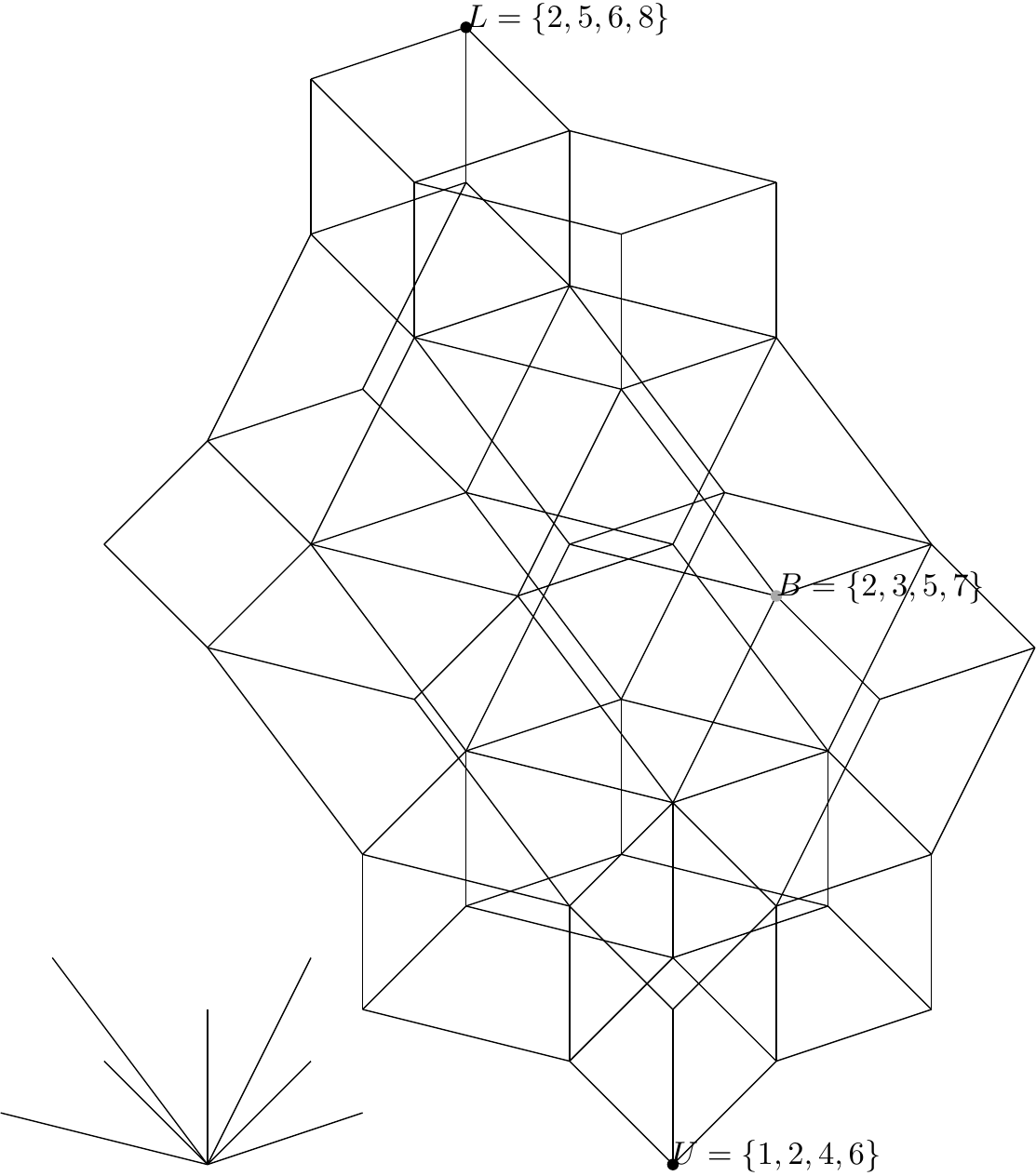}
    \caption{The lattice of bases of $M[1246,3568]$.}\label{fig:lattice}
\end{figure}

In what remains for this section we intend to describe combinatorially quotients of LPMs. In particular, we will determine when $M[U\setminus\{u\},L\setminus\{\ell\}]$ is a quotient of $M[U,L]$, where $u\in U$ and $\ell\in L$. Let us start by gathering some more intuition. Given $A\in{[n]\choose k}$ we denote its elements using lower case as $A=\{a_1<\cdots<a_k \}$.

If $M=M[U,L]$, it is not true in general that $M[U\setminus\{u_j\},L\setminus\{\ell_i\}]$ is a quotient of $M$, for any choice of $i,j\in[k]$. As an example  let  $M=M[1357,3578]$ and take the basis $B=1467$, also take $j=4$ and $i=1$. Using the notation from (\ref{eq:f_circ}) we have that  $B_5=\{q\in B\mid B+5-q\in M\}=\{4,6\}$. Moreover, for any $i\leq r\leq j$ one can check that in the matroid $M[135,578]$ we obtain  $(B\setminus\{b_r\})_5=B\setminus\{b_r\}\not\subseteq B_5$. 

We also point out that not every quotient of an LPM is an LPM. Indeed, \emph{every} matroid on $[n]$ is a quotient of the LPM $U_{n,n}$. As an another example, the \emph{($i$th-)truncation} of an LPM $M$, i.e., with base set given by $\mathcal{B}_i:=\{X\in{[n] \choose r-i}\mid \exists B\in\mathcal{B}: X\subseteq B\}$, is a quotient of $M$, although it may not be an LPM. A particular example of this situation comes from taking the LPM given by the direct sum $M=U_{1,2}\oplus U_{1,2}\oplus U_{1,2}$, see Figure~\ref{fig:u12x3}. Its first truncation is not an LPM, although it is a positroid, see~\cite{Bon-06}. 

\begin{figure}[htp]
    \centering
    \includegraphics[width=.15\textwidth]{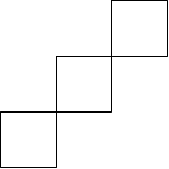}
    \caption{A diagram representing $M[135,246]=U_{1,2}\oplus U_{1,2}\oplus U_{1,2}$.}\label{fig:u12x3}
\end{figure}

The following will be essential for our results and is illustrated in  Figure~\ref{fig:Bp} as a visual aid.
\begin{lemma}\label{lem:Bp}
 Let $M=M[U,L] $ of rank $k$ and let $B=\{b_1<\cdots<b_k\}$ be a basis of $M$ and set $b_0=0$ and $b_{k+1}=n+1$. Let $p\in[n]$ such that $p\notin B$ and take the $x\in[k+1]$ such that $b_{x-1}<p<b_x$. Then $B_p=\{b_s< \ldots< b_t\}$ where 
 \begin{itemize}
 \item[(a)]   $1\leq s\leq x$ and $b_{s+1}\leq\ell_{s}, \ldots, b_{x-1}\leq\ell_{x-2}, p\leq \ell_{x-1}$,
 \item[(b)] $x-1\leq t\leq k$ and $p\geq u_x, b_x\geq u_{x+1}, \ldots, b_{t-1}\geq u_t.$
 \end{itemize}
\end{lemma}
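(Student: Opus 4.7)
The plan is to compute $B_p$ directly from the Gale characterization of the bases of $M[U,L]$: namely, $B+p-b_r\in\mathcal{B}$ if and only if $U\leq_G (B\setminus\{b_r\})\cup\{p\}\leq_G L$. I write out the sorted form of $B':=(B\setminus\{b_r\})\cup\{p\}$ explicitly, and then compare it component-wise against $U$ and $L$. The index $x$ singled out in the hypothesis tells me exactly where $p$ lands in this sorted order, which splits the analysis into two dual regimes.

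First, suppose $r<x$, so $b_r<p$. Then $p$ slides into slot $x-1$ of $B'$, while $b_{r+1},\dots,b_{x-1}$ each shift one slot to the left; slots below $r$ and at or above $x$ are unchanged. Hence the inequalities $U\leq_G B$ and $B\leq_G L$ remain intact outside positions $r,\dots,x-1$. At the affected positions, the lower bounds $u_i\leq b'_i$ are automatic, since $u_i\leq b_i<b_{i+1}$ handles the shifted entries and $u_{x-1}\leq b_{x-1}<p$ handles the new entry $p$. The upper bounds $b'_i\leq\ell_i$ become exactly
\[
b_{r+1}\leq\ell_r,\; b_{r+2}\leq\ell_{r+1},\; \dots,\; b_{x-1}\leq\ell_{x-2},\; p\leq\ell_{x-1}.
\]
Dually, if $r\geq x$ then $p<b_x\leq b_r$; so $p$ lands in slot $x$ of $B'$, and $b_x,\dots,b_{r-1}$ each shift one slot to the right. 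Now the upper bounds $b'_i\leq\ell_i$ are automatic (using $p<b_x\leq\ell_x$ and $b_{i-1}<b_i\leq\ell_i$), and the binding inequalities are
\[
p\geq u_x,\; b_x\geq u_{x+1},\; \dots,\; b_{r-1}\geq u_r.
\]

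The punchline is the nesting of these chains. In the first regime, the system of inequalities associated with $r$ strictly contains that associated with $r+1$, so membership in $B_p$ propagates upward in $r$: the set $\{r<x\mid b_r\in B_p\}$ is either empty or an interval $\{s,s+1,\dots,x-1\}$, where $s$ is the least index for which the whole chain in (a) holds. Symmetrically, $\{r\geq x\mid b_r\in B_p\}$ is either empty or an interval $\{x,\dots,t\}$, where $t$ is the largest index for which the chain in (b) holds. Gluing the two intervals — with the convention that an empty first piece corresponds to $s=x$ and an empty second piece to $t=x-1$ — yields $B_p=\{b_s,\dots,b_t\}$ together with exactly the inequalities asserted in (a) and (b).

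The main obstacle is purely notational: one has to keep careful track of how the position indices in $B'$ shift relative to those in $B$ once a single element is removed and $p$ is inserted. With that bookkeeping in place, each Gale inequality falls out automatically, and the chain (nesting) structure of the "new" constraints is immediate, since moving from $r$ to $r\pm 1$ only drops or adds one extra inequality at the boundary. The edge cases $x=1$ and $x=k+1$ (i.e., $p$ below all of $B$ or above all of $B$) collapse one of the two regimes and match the boundary conventions $s\leq x$, $t\geq x-1$.
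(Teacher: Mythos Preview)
Your argument is correct and is essentially the same idea as the paper's, just carried out algebraically rather than pictorially: the paper argues via the lattice-path picture that the valid exchanges are exactly the North steps between the last contact of $B$ with $L$ before $p$ and the next contact with $U$ after $p$, while you verify the identical fact by writing down the sorted sequence of $B'=(B\setminus\{b_r\})\cup\{p\}$ and reading off which of the Gale inequalities $U\leq_G B'\leq_G L$ are new. Your version is more explicit and handles the index bookkeeping and the nesting of constraints in full, whereas the paper's proof relies on Figure~\ref{fig:Bp} for these details.
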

\begin{proof}
 For the proof consider Figure~\ref{fig:Bp}, where the basis $B$ is a monotone path $P$ in the LPM diagram. Since $p\notin B$, it corresponds to a horizontal segment of $P$. Now, $B_p$ consists of those vertical segments $q$ of $P$ that can be made horizontal such that after making $p$ vertical, the path $Q$ corresponding to $B\setminus\{q\}\cup\{p\}$ remains within the boundaries of the diagram. These segments are (a) between the last time $B$ touched $L$ before arriving at $p$ and $p$ itself or (b) after $p$ and the next time $B$ touches $U$. This is what is expressed through indices in the statement of the lemma. 
\end{proof}

\begin{figure}[htp]
    \centering
    \includegraphics[width=.5\textwidth]{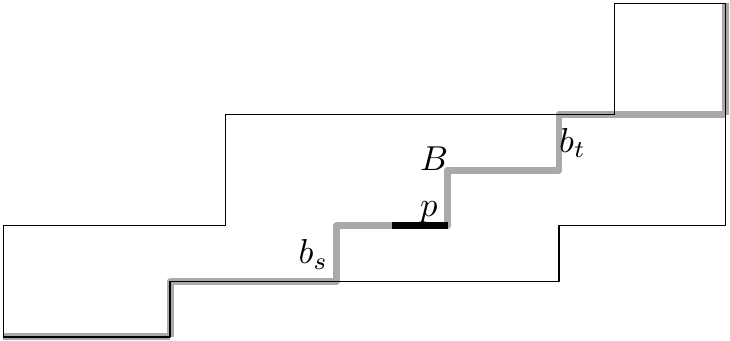}
    \caption{An illustration of Lemmas~\ref{lem:quotient_def} and~\ref{lem:Bp}: $B_p$ are the North steps in $B$ that can be turned into East steps, such that if $p$ is turned into a North step, then the resulting path is valid. They are precisely the North steps between $b_s$ and $b_t$.}\label{fig:Bp}
\end{figure}

\begin{definition}\label{def:good_pair}
Let $M=M[U,L]$ be an LPM where $U=\{u_1<\cdots< u_k\}$, $L=\{\ell_1<\cdots <\ell_k\}$. Let $1\leq i,j\leq k$. We say that $(\ell_i,u_j)$ is a \emph{good pair of $M$} if \begin{enumerate}
            \item $i\leq j$,
            \item $u_j-\ell_i\leq j-i$.
\end{enumerate}
Otherwise, we say that $(\ell_i,u_j)$ is a \emph{bad pair of $M$}. 
\end{definition}

We point out that Definition~\ref{def:good_pair} is equivalent to saying that $(\ell_i,u_j)$ is a good pair of $M$ if and only if $\max\{0,u_j-\ell_i\}\leq j-i$.
Graphically, being a good pair can be visualized as follows. The step $u_j$ is such that its northern vertex $(a,b)$ determines the closed region bounded below by $L$, and lies in the halfspaces $x\geq a$ and $y\leq b$. Then the pair  $(\ell_i,u_j)$ is a good pair if  $\ell_i$ lies in this region.  Figure~\ref{fig:nonquotient} depicts a bad pair $(\ell_i,u_j)$. Every good pair $(\ell_i,u_j)$ allows us to characterize LPMs of rank $k-1$ that are a quotient of a given LPM $M=M[U,L]$ of rank $k$ as the upcoming result (which will turn out to be an equivalence) shows.  

\begin{proposition}\label{prop:goodpair}
Let $M=M[U,L]$ be such that $r(M)=k$ and let $(\ell_i,u_j)$ be a good pair of $M$. Then the matroid $M'=M[U\setminus\{u_j\},L\setminus\{\ell_i\}]$ of rank $k-1$ is a quotient of $M$.
\end{proposition}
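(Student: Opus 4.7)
The plan is to apply Lemma~\ref{lem:quotient_def}: given a basis $B=\{b_1<\cdots<b_k\}$ of $M$ and an element $p\in[n]\setminus B$, I will exhibit $B'\in\mathcal{B}'$ with $B'\subseteq B$ and $B'_p\subseteq B_p$. The candidate is $B':=B\setminus\{b_r\}$ for an index $r\in\{i,i+1,\ldots,j\}$ to be determined.

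The first step is routine and uses only condition (1) of Definition~\ref{def:good_pair}. Reindex $B'$ as $b'_s=b_s$ for $s<r$ and $b'_s=b_{s+1}$ for $s\geq r$, and similarly write $\ell'_s=\ell_{s+\mathbf{1}_{s\geq i}}$ and $u'_s=u_{s+\mathbf{1}_{s\geq j}}$. A short case split on the index $s\in[k-1]$ relative to $i,j,r$, combined with the inequalities $u_s\leq b_s\leq\ell_s$, yields $U'\leq_G B'\leq_G L'$; hence $B'\in\mathcal{B}'$ for any $r\in[i,j]$. Notably, condition (2) of Definition~\ref{def:good_pair} is not yet needed.

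The core step is to choose $r$ so that $B'_p\subseteq B_p$. Let $x$ satisfy $b_{x-1}<p<b_x$ (with the conventions $b_0=0$, $b_{k+1}=n+1$), and use Lemma~\ref{lem:Bp} to describe $B_p=\{b_s,\ldots,b_t\}$ via the $L$-constraints below $p$ and the $U$-constraints above $p$. Applying the same lemma to $B'$ in $M'$ gives an analogous description $B'_p=\{b'_{s'},\ldots,b'_{t'}\}$ in terms of $U',L'$. The idea is to split on the position of $x$ relative to $[i,j]$: take $r=i$ if $x\leq i$, take $r=j$ if $x>j$, and take $r\in\{x-1,x\}$ in the middle case $i<x\leq j$, the exact choice being dictated by whether the binding side of $B_p$ is the $L$- or the $U$-side. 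In each case, a direct inequality calculation using the shift formulas for $\ell'$ and $u'$ shows that $b'_{s'}$ and $b'_{t'}$, regarded as elements of $B$, lie in $[b_s,b_t]$, so that $B'_p\subseteq B_p$.

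The main obstacle is the middle case $i<x\leq j$, where both $L'$ and $U'$ differ from $L$ and $U$ near $p$; this is precisely where condition (2) of Definition~\ref{def:good_pair}, $u_j-\ell_i\leq j-i$, enters. Rewritten as $u_j-j\leq\ell_i-i$, it places the head vertex of the step $u_j$ in $U$ weakly to the left of the head vertex of $\ell_i$ in $L$, which is exactly what keeps the shifted bounds defining $B'_p$ from escaping the interval $[s,t]$. To see this condition is genuinely essential, consider $M[\{1,5\},\{2,5\}]$ with the (non-good) pair $(\ell_1,u_2)=(2,5)$: for $B=\{1,5\}$ and $p=3$ one finds $B_p=\emptyset$ while for either choice $B'=\{1\}$ or $B'=\{5\}$ of a basis of $M'=U_{1,5}$ we have $B'_p\neq\emptyset$, so no admissible $B'$ exists.
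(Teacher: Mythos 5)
Your strategy --- take $B'=B\setminus\{b_r\}$ for some $r\in[i,j]$ and use Lemma~\ref{lem:Bp} to compare $B'_p$ with $B_p$ --- is the same as the paper's, and your first step (that any such $B'$ is a basis of $M'$) is correct. The gap is in the core step: the case split must be made on the indices $s,t$ with $B_p=\{b_s,\ldots,b_t\}$, not on $x$. The index $x$ only records where $p$ sits inside $B$; it does not record where $B$ last touches $L$ below $p$ (which determines $s$) or first touches $U$ above $p$ (which determines $t$), and it is these that govern the correct choice of $r$.

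Concretely, take $M=M[\{1,2,5,6\},\{3,4,7,8\}]$, the good pair $(\ell_1,u_4)=(3,6)$, so $M'=M[\{1,2,5\},\{4,7,8\}]$, the basis $B=\{3,4,5,6\}$, and $p=1$. Here $x=1\leq i=1$, so your rule gives $r=i=1$ and $B'=\{4,5,6\}$; but $B_p=\{3,4\}$ while $B'_p=\{4,5,6\}$, so $B'_p\not\subseteq B_p$. What went wrong is that $t=2<j=4$: removing the low element $b_1$ shifts the indexing against $U'$ and lets $B'_p$ grow above $b_t$. The correct choice here is $r=j=4$. Dually, for the same $M$, $B$ and good pair but $p=8$, one has $x=5>j=4$, so your rule gives $r=j=4$ and $B'=\{3,4,5\}$; but $B_p=\{5,6\}$ and $B'_p=\{3,4,5\}\not\subseteq B_p$, and here $s=3>i=1$ with the correct choice being $r=i=1$. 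In short, $x\leq i$ forces $s\leq i$ but says nothing about $t$, and $x>j$ forces $t\geq j$ but says nothing about $s$. The correct split (as in the paper) is: if $s>i$ take $r=i$; if $t<j$ take $r=j$; if $s\leq i$ and $t\geq j$ any $r\in[i,j]$ works; and condition (2) of Definition~\ref{def:good_pair} is used precisely to rule out $s>i$ and $t<j$ occurring simultaneously.
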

\begin{proof}
 Let $B\in\mathcal{B}(M)$ and let $(\ell_i,u_j)$ be a good pair of $M$ for some $1\leq i\leq j\leq k$. For any $r\in [k]$
it holds that $B\setminus\{b_r\}\subseteq B$. Furthermore, since $U\leq_G B\leq_G L$ and if $i\leq r\leq j$  we have $U\setminus\{u_j\}\leq_G U\setminus\{u_r\} \leq_G B\setminus\{b_r\}\leq_G L\setminus\{\ell_r\}\leq_G L\setminus\{\ell_i\}$. Therefore $B\setminus\{b_r\}$ is a basis of $M'$. 

Now let $p\notin B$. We will show that if $\max(0,u_j-\ell_i)\leq j-i$, then $r$ can be chosen such that $(B\setminus\{b_r\})_p\subseteq B_p$. We use the description of $B_p$ provided by Lemma~\ref{lem:Bp}.
%
%
We want to choose $i\leq r\leq j$ such that for $L'=L\setminus\{\ell_i\}$, $B'=B\setminus\{b_r\}$, $U'=U\setminus\{u_j\}$ and the correspondingly defined $s',t'$ we have that $b_s\leq b'_{s'}$ and $b_t\geq b'_{t'}$.

\emph{Case 1:} Let $i<s$ and $t<j$. In this situation it holds that $\ell_i\leq\ell_{s-1}<b_s<\cdots<p<\cdots<b_t<u_{t+1}\leq u_j$. Thus, $u_j-\ell_i>t-s+2\geq j-i$, which contradicts our assumption on $(\ell_i,u_j)$ being a good pair. Hence, we cannot have $i<s$ and $t<j$ simultaneously.

\emph{Case 2:} If $s>i$, then we set $r=i$. We get either   $\ell'_{s-2}\leq\ell_{s-1}<b_s=b'_{s-1}\leq b'_{s'}$ or $s-1=1\leq s'$. Since $r\leq j\leq t$, one can see that either $u'_{t}=u_{t+1}>b_t\geq b'_{t-1}\geq b'_{t'}$ or $t=r\geq t'$.

\emph{Case 3:} Similarly, if $t<j$, then we set $r=j$ and we obtain that either $u'_{t+1}\geq u_{t+1}>b_t=b'_t\geq b'_{t'}$ or $t=r\geq t'$. By the above we have $s\leq i\geq r$, we compute either $\ell'_{s-1}=\ell_{s-1}<b_s\leq b'_s\leq b'_{s'}$ or $s=1\leq s'$.

\emph{Case 4:} If $s\leq i$ and $t\geq j$ any choice of $i\leq r \leq j$ yields a good $B'$. Indeed, as above we will get $u'_{t}=u_{t+1}>b_t\geq b'_{t-1}\geq b'_{t'}$ and $\ell'_{s-1}=\ell_{s-1}<b_s\leq b'_s\leq b'_{s'}$.
\end{proof}

\begin{figure}[htp]
    \centering
    \includegraphics[width=.35\textwidth]{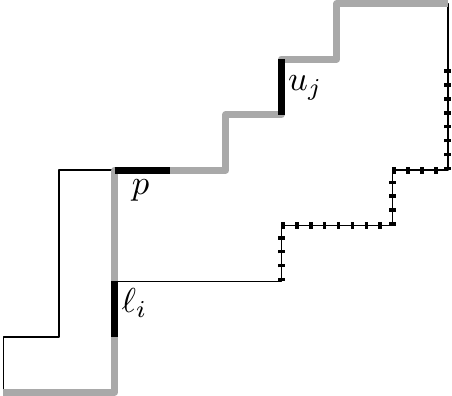}
    \caption{An LPM with a bad pair $(\ell_i, u_j)$ . The gray basis $B$ has $(B\setminus\{b_r\})_p\not\subseteq B_p$ for all $r$. Exactly those $\ell\in L$ on the dotted path yield good pairs with $u_j$.}\label{fig:nonquotient}
\end{figure}

\begin{lemma}\label{lem:containment} Let $M'=M[U',L']$ and $M=M[U,L]$.
 If $M'\leq_Q M$, then $U'\subseteq U$ and $L'\subseteq L$.
\end{lemma}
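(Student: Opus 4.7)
The plan is to invoke the standard equivalent formulation of matroid quotients in terms of rank functions: for two matroids on a common ground set $[n]$, we have $M'\leq_Q M$ if and only if $f(A):=r_M(A)-r_{M'}(A)$ is monotone non-decreasing in $A$ (with respect to set inclusion). This equivalence with Definition~\ref{def:quotient} is classical; see e.g.~\cite[Ch.~7]{Bry-86} or~\cite{O11}.

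The key LPM-specific input is the following pair of closed-form rank values for initial and terminal intervals. For $M=M[U,L]$ of rank $k$, writing $U=\{u_1<\cdots<u_k\}$ and $L=\{\ell_1<\cdots<\ell_k\}$, I would show
\[
r_M([j]) \;=\; |U\cap[j]|, \qquad r_M(\{j+1,\ldots,n\}) \;=\; k - |L\cap[j]|.
\]
Both identities are immediate from the diagrammatic/Gale description: the intersection $|B\cap[j]|$ of a basis with an initial segment of $[n]$ is maximized by the Gale-smallest basis $U$ and minimized by the Gale-largest basis $L$.

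To deduce $U'\subseteq U$, I would apply monotonicity of $f$ to the chain $[j]\subset[j+1]$ and use the first rank formula to obtain
\[
\mathbf{1}[j+1\in U'] \;-\; \mathbf{1}[j+1\in U] \;=\; f([j])-f([j+1]) \;\leq\; 0,
\]
so every element of $U'$ lies in $U$. For $L'\subseteq L$, I would apply monotonicity instead to the chain $\{j+2,\ldots,n\}\subset\{j+1,\ldots,n\}$ (with the smaller set on the left) and expand using the second rank formula; after rearrangement this yields $\mathbf{1}[j+1\in L']\leq \mathbf{1}[j+1\in L]$, and hence $L'\subseteq L$. The argument is short, and there is no real obstacle: the only external fact invoked is the rank-monotonicity characterization of quotients, and the only LPM content is the two one-line rank formulas above.
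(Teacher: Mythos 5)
Your proof is correct, and it takes a genuinely different route from the paper. The paper argues at the level of bases: it uses Lemma~\ref{lem:quotient_def} (the fundamental-circuit restatement of Definition~\ref{def:quotient}) together with the diagrammatic description of $B_p$ from Lemma~\ref{lem:Bp}, choosing the smallest element $p \in U' \setminus U$, examining the fundamental circuits $U_p$ and $B'_p$ for a suitable basis $B' \subseteq U$ of $M'$, and deriving a contradiction; the containment $L' \subseteq L$ is then obtained by a symmetric argument. You instead invoke the classical rank-function characterization of quotients, namely that $M' \leq_Q M$ if and only if $r_M - r_{M'}$ is monotone non-decreasing on $2^{[n]}$ (this is indeed standard; cf.\ Oxley, Prop.~7.3.6, or Brylawski), combined with the closed-form rank values $r_M([j]) = |U \cap [j]|$ and $r_M(\{j+1,\ldots,n\}) = k - |L \cap [j]|$. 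Both rank formulas are correct: the Gale-minimality of $U$ (resp.\ Gale-maximality of $L$) gives $|B \cap [j]| \leq |U \cap [j]|$ (resp.\ $|B \cap [j]| \geq |L \cap [j]|$) for every basis $B$, and the bounds are attained. The telescoping of the monotonicity inequality along $[j] \subset [j+1]$ and along $\{j+2,\ldots,n\} \subset \{j+1,\ldots,n\}$ then yields $U' \subseteq U$ and $L' \subseteq L$ directly. Your argument is shorter and self-contained (it does not need Lemma~\ref{lem:Bp}), at the cost of importing the rank-monotonicity characterization of quotients, which the paper does not otherwise use; the paper's version has the advantage of reusing machinery (Lemma~\ref{lem:Bp}) that is already needed for Proposition~\ref{prop:goodpair} and keeps the whole development in the language of Definition~\ref{def:quotient}.
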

\begin{proof}
 We only show $U'\subseteq U$, the proof that $L'\subseteq L$ is analogous. Suppose, by contradiction, that  $U'\not\subseteq U$ and choose the smallest $p\in U'\setminus U$. By Lemma~\ref{lem:Bp} (also see Figure~\ref{fig:Bp}) we know that $U_p$ consists of all North steps in $U$ that can be made East in order to yield a valid path when $p$ is made North. Since $U$ is the upper path this yields $u<p$ for all $u\in U_p$.
 
 Now, following Lemma~\ref{lem:Bp} we take the $x$ such that $u_{x-1}<p<u_x$. Let now $B'=\{b_1< \cdots< b_k\}$ be a basis of $M'$ such that $B'\subseteq U$ and $B'_p\subseteq U_p$. Such $B'$ exists since $U$ is a basis of $M$ and $M'\leq_Q M$, by Lemma~\ref{lem:quotient_def}. Since  $p$ is the smallest element in $U'\setminus U$, we have $b_i\geq u_i$ for all $i<x$. Since $p\in U'$ and $B'\subseteq U$ we have $b_x>p=u_x$. Since $u<p$ for all $u\in U_p$, we have $b_x\notin U_p$. However, Lemma~\ref{lem:Bp} yields $b_x\in B'_p$, because $p$ is a North step in $U'$ but not in $B'$, but $b_x$ is the next North step in $B'$ after $p$. This leads to a contradiction with Lemma~\ref{lem:quotient_def}.
\end{proof}
%

\begin{lemma}\label{lem:greedyprimal}  Let $M'=M[U',L']$ and $M=M[U,L]$, where $U'=\{u'_1<\cdots< u'_{k'}\}$, $L'=\{\ell_1'<\cdots <\ell_{k'}'\}$, $U=\{u_1<\cdots< u_{k}\}$, $L=\{\ell_1<\cdots <\ell_{k}\}$. Denote $U\setminus U'=\{u_{i_1}<\cdots <u_{i_z}\}$ and $L\setminus L'=\{\ell_{j_1}<\cdots < \ell_{j_z}\}$.
If $M'\leq_Q M$, then $\{{j_1}<\cdots <{j_z}\}\leq_G\{{i_1}<\cdots <{i_z}\}$.
\end{lemma}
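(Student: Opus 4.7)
The plan is to reduce the Gale comparison $\{j_1<\cdots<j_z\}\leq_G\{i_1<\cdots<i_z\}$ to the equivalent family of counting inequalities
\[
A_m := |\{s: i_s\leq m\}| \;\leq\; |\{s: j_s\leq m\}| =: C_m, \qquad m\in\{0,1,\ldots,k\},
\]
and then to derive each $A_m\leq C_m$ from the quotient hypothesis (via Lemma~\ref{lem:quotient_def}) applied to a carefully chosen basis of $M$.

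For each $m$, I will use the hybrid basis $B_m:=\{u_1,\ldots,u_m,\ell_{m+1},\ldots,\ell_k\}$, which is well-defined since $u_m<u_{m+1}\leq\ell_{m+1}$ and is a basis of $M$ because $U\leq_G B_m\leq_G L$. Picking any $p\in[n]\setminus B_m$ (in the boundary case $n=k$ one has $U=L=[n]$, and the conclusion of the lemma is exactly the standing LPM hypothesis $U'\leq_G L'$), the quotient property yields $B'\in\mathcal{B}(M')$ with $B'\subseteq B_m$. Writing $z_U$ for the number of elements of $\{u_1,\ldots,u_m\}$ lying in $B_m\setminus B'$, the first $m-z_U$ elements of $B'$ sit in $\{u_1,\ldots,u_m\}$ (so are $\leq u_m$), while the remaining $k-m-z+z_U$ sit in $\{\ell_{m+1},\ldots,\ell_k\}$ (so are $\geq \ell_{m+1}$).

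The heart of the argument is a two-sided squeeze on $z_U$. Setting $a_m:=m-A_m$ and $c_m:=m-C_m$, the Gale inequality $U'\leq_G B'$ read at index $a_m+1$ forces $z_U\geq A_m$: otherwise $u'_{a_m+1}\geq u_{m+1}$ would exceed $b'_{a_m+1}\leq u_m$. Symmetrically, $B'\leq_G L'$ read at index $m-z_U+1$ forces $z_U\leq C_m$ whenever this index lies in $[1,k-z]$, for otherwise $\ell'_{m-z_U+1}\leq \ell_m$ would be smaller than $b'_{m-z_U+1}\geq \ell_{m+1}$. When both bounds apply, I obtain $A_m\leq z_U\leq C_m$ at once.

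The only subtlety is the degenerate situation $m-z_U=k-z$, in which $B'$ has no lower elements and the upper-bound step becomes vacuous. There $z_U=m+z-k$, so combined with the lower bound we only get $A_m\leq m+z-k$; this is already $\leq C_m$ by the elementary observation that at most $k-m$ of the indices $j_s$ can exceed $m$, giving $C_m\geq z-(k-m)=m+z-k$. Either way $A_m\leq C_m$, completing the argument. The main obstacle I anticipate is the bookkeeping—correctly tracking the cut-off indices $a_m$, $c_m$, $m-z_U$ in the two Gale chains and isolating the degenerate case where one side of the squeeze collapses—rather than any serious conceptual difficulty.
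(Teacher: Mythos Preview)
Your proof is correct and follows essentially the same strategy as the paper: both arguments use the hybrid basis $B_m=\{u_1,\ldots,u_m,\ell_{m+1},\ldots,\ell_k\}$, extract a basis $B'\subseteq B_m$ of $M'$ from the quotient hypothesis, and then read off the Gale comparisons $U'\leq_G B'\leq_G L'$ at a well-chosen index---the paper does this by contradiction at the single critical value $m=i_w$ (the first place where $i_w<j_w$) and index $i_w-w+1$, while you establish the equivalent counting inequality $A_m\leq C_m$ for every $m$ directly via the squeeze $A_m\leq z_U\leq C_m$. One tiny parallel omission: your lower-bound step at index $a_m+1$ also has a degenerate boundary case $a_m=k-z$ (so $u'_{a_m+1}$ does not exist), and it is resolved exactly as you resolve the upper one---then $A_m=m+z-k$ and $z_U\geq A_m$ follows immediately from $z_L\leq k-m$.
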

\begin{proof}
We argue by contradiction.  Suppose that $J:=\{{j_1}<\cdots <{j_z}\}\not\leq_G\{{i_1}<\cdots <{i_z}\}$ and let $w$ be the smallest index such that $i_w<j_w$. The choice of $w$ yields 
$j_1<\cdots <j_{w-1}\leq i_{w-1}<i_w<j_w$ and in particular $i_w\notin J$. Then we have $\ell_{i_w}=\ell_{i_w-w+1}'$. That is, the ${i_w}$-th North step of $L$ is also a North step of $L'$, but appears $w-1$ North steps earlier. Similarly, we have $u_{i_w}<u'_{i_w-w+1}$.
Consider now the set $B=\{u_1,\ldots,u_{i_w},\ell_{i_{w}+1},\ldots,\ell_k\}$, which is a basis of $M$, since one can view it as following first $U$, then passing all to the East until hitting $L$ and then continuing $L$ until the end. By the quotient relation there is a set $Z$ of size $z$ such that $U'\leq_G B'\leq_G L'$ where $B':=B\setminus Z$. 
 
 Now, since  $U'\leq_G B'$ we have $u_{i_w}<u'_{i_w-w+1}\leq b'_{i_w-w+1}$ which by the shape of $B$ implies  $b'_{i_w-w+1}\geq \ell_{i_w+1}$. With $\ell_{i_w+1}>\ell_{i_w}=\ell_{i_w-w+1}'$ this yields $b'_{i_w-w+1}>\ell_{i_w-w+1}'$ and contradicts $B'\leq_G L'$. 
 \end{proof}
 
 If $M=M[U,L]$ is an LPM on the ground set $[n]$ then its dual matroid $M^*$ is such that $M^*=M[\overline{L},\overline{U}]$ where $\overline A:=[n]\setminus A$ for $A\subseteq [n]$, see e.g.~\cite{Bon-06}. Then, Lemma \ref{lem:greedyprimal} can be stated in terms of $M^*$ and $M'^*$ since $M'\leq_Q M$ if and only if $M^*\leq_Q {M'}^*$ , see~\cite[Proposition 7.4.7]{Bry-86} and $U\setminus U'=\overline{U'}\setminus\overline{U}$. Thus, by duality we obtain the following result. We leave the details of the proof to the reader.

\begin{lemma}\label{lem:greedydual}
 Let $M'^*=M[\overline{L'},\overline{U'}]$ and $M^*=M[\overline{L},\overline{U}]$, where $\overline{U'}=\{\overline{u'}_1<\cdots< \overline{u'}_{n-k'}\}$, $\overline{L'}=\{\overline{\ell}'_1<\cdots <\overline{\ell}'_{n-k'}\}$, $\overline{U}=\{\overline{u}_1<\cdots< \overline{u}_{n-k}\}$, $\overline{L}=\{\overline{\ell}_1<\cdots <\overline{\ell}_{n-k}\}$. Denote $\overline{U'}\setminus\overline{U}=\{\overline{u'}_{i_1},\ldots, \overline{u'}_{i_z}\}$ and $\overline{L'}\setminus\overline{L}=\{\overline{\ell}_{j_1}',\ldots, \overline{\ell}_{j_z}'\}$.
If $M^*\leq_Q M'^*$, then $\{{i_1},\ldots, {i_z}\}\leq_G \{{j_1},\ldots, {j_z}\}$.
\end{lemma}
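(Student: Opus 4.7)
The plan is to derive Lemma~\ref{lem:greedydual} directly from Lemma~\ref{lem:greedyprimal} via the matroid-duality dictionary that the paper just recalled, namely $M^*=M[\overline{L},\overline{U}]$ and $M'\leq_Q M\iff M^*\leq_Q M'^*$. So the entire argument is one application of Lemma~\ref{lem:greedyprimal} after a careful relabelling.

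First I would rename: set $N:=M'^*=M[\overline{L'},\overline{U'}]$ and $N':=M^*=M[\overline{L},\overline{U}]$. The hypothesis $M^*\leq_Q M'^*$ is, by definition, the quotient relation $N'\leq_Q N$ between two LPMs whose upper paths are $U_N=\overline{L'},\;U_{N'}=\overline{L}$ and whose lower paths are $L_N=\overline{U'},\;L_{N'}=\overline{U}$. In particular we are precisely in the situation to apply Lemma~\ref{lem:greedyprimal} to the pair $(N,N')$.

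Next I would match up the index sets carefully, since this is the only subtle point. The upper-path set difference $U_N\setminus U_{N'}=\overline{L'}\setminus\overline{L}$ is, by the very notation of the statement, indexed inside $U_N=\overline{L'}$ by $j_1<\cdots<j_z$; the lower-path set difference $L_N\setminus L_{N'}=\overline{U'}\setminus\overline{U}$ is indexed inside $L_N=\overline{U'}$ by $i_1<\cdots<i_z$. Thus, compared with Lemma~\ref{lem:greedyprimal}, the roles of the ``$i$-indices'' and ``$j$-indices'' are swapped: the $j$'s of Lemma~\ref{lem:greedydual} are the ``upper path indices'' of Lemma~\ref{lem:greedyprimal}, and the $i$'s of Lemma~\ref{lem:greedydual} are the ``lower path indices''.

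Finally, Lemma~\ref{lem:greedyprimal} asserts that the lower-path index family is $\leq_G$ the upper-path index family, which in our relabelled notation reads $\{i_1<\cdots<i_z\}\leq_G\{j_1<\cdots<j_z\}$; this is exactly the conclusion of Lemma~\ref{lem:greedydual}. The only possible obstacle is bookkeeping around the duality $M^*=M[\overline{L},\overline{U}]$, which swaps upper and lower paths; once the four identifications $U_N=\overline{L'}$, $L_N=\overline{U'}$, $U_{N'}=\overline{L}$, $L_{N'}=\overline{U}$ are recorded, no further work is required and the proof is complete.
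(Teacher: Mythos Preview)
Your proposal is correct and is exactly the duality argument the paper intends: the paper states that Lemma~\ref{lem:greedydual} follows from Lemma~\ref{lem:greedyprimal} by using $M'\leq_Q M\iff M^*\leq_Q M'^*$ and $U\setminus U'=\overline{U'}\setminus\overline{U}$, and leaves the details to the reader. You have supplied precisely those details, correctly tracking that under the dual identification $N=M'^*$, $N'=M^*$ the upper/lower paths swap so that the $j$-indices of Lemma~\ref{lem:greedydual} play the role of the upper-path indices in Lemma~\ref{lem:greedyprimal} and the $i$-indices that of the lower-path indices, giving $\{i_1,\ldots,i_z\}\leq_G\{j_1,\ldots,j_z\}$ as required.
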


 The following definition is an extension of Definition \ref{def:good_pair}. Given LPMs $M'\leq_Q M$ it will allow us to provide a sequence $M_1,\dots,M_{z-1}$ of LPMs  of ranks $k-z+1,\dots,k-1$, respectively, such that $M'\leq_Q M_1\leq_Q\cdots\leq_Q M_{z-1}\leq_Q M$.

\begin{definition}[Pairings]\label{def:good_pairing}
Let $M=M[U,L]$ be an LPM where $U=\{u_1<\cdots< u_k\}$, $L=\{\ell_1<\cdots <\ell_k\}$. Let $\widetilde{U}=\{u_{i_1}<\cdots <u_{i_z}\}$ and $\widetilde{L}=\{\ell_{j_1}<\cdots < \ell_{j_z}\}$ be subsets of $U$ and $L$, respectively.
\begin{itemize}
    \item[(a)] Given $\pi:[z]:\rightarrow\{j_1<\cdots <j_z\}$  and $\psi:[z]\rightarrow\{i_1<\cdots <i_z\}$ bijections, the sequence $((\ell_{\pi(1)},u_{\psi(1)})),\dots, (\ell_{\pi(z)},u_{\psi(z)}))$ is called a \emph{pairing} of $(\widetilde{L}, \widetilde{U})$. 
    \item[(b)] A pairing is \emph{good} if  $(\ell_{\pi(r)},u_{\psi(r)})$ is a good pair of the LPM $M[U',L']$ where $U'=U\setminus\{u_{\psi(1)},\dots,u_{\psi(r-1)} \}$ and $L'=L\setminus\{\ell_{\pi(1)},\dots,\ell_{\pi(r-1)} \}$, for $1\leq r\leq z-1$.
    \item[(c)] A pairing is \emph{greedy} if $\pi$ and $\psi$ are order-preserving. That is, if it is of the form $((\ell_{j_1},u_{i_1}),\ldots, (\ell_{j_z},u_{i_z}))$. 
\end{itemize}  
\end{definition}



\begin{lemma}\label{lem:greedypairing}
Let $M'=M[U',L']$ and $M=M[U,L]$ be such that $M'\leq_Q M$ and $r(M)=k$. Let
$U\setminus U'=\{u_{i_1}<\cdots <u_{i_z}\}$ and $L\setminus L'=\{\ell_{j_1}<\cdots < \ell_{j_z}\}$. Then the greedy pairing $((\ell_{j_1},u_{i_1}),\ldots, (\ell_{j_z},u_{i_z}))$ of $(L\setminus L', U\setminus U')$ is good.
\end{lemma}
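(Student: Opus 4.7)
The strategy is to verify, for every $r \in [z]$, that the pair $(\ell_{j_r}, u_{i_r})$ satisfies the two conditions of Definition~\ref{def:good_pair} in the LPM
\[
M^r := M\bigl[\, U\setminus\{u_{i_1},\ldots,u_{i_{r-1}}\},\ L\setminus\{\ell_{j_1},\ldots,\ell_{j_{r-1}}\}\,\bigr].
\]
Since the previously deleted $u$- and $\ell$-elements have original indices strictly smaller than $i_r$ and $j_r$ respectively, $u_{i_r}$ occupies position $i_r-(r-1)$ inside $U\setminus\{u_{i_1},\ldots,u_{i_{r-1}}\}$ and $\ell_{j_r}$ occupies position $j_r-(r-1)$ inside $L\setminus\{\ell_{j_1},\ldots,\ell_{j_{r-1}}\}$. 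Once plugged into Definition~\ref{def:good_pair}, the shift by $r-1$ cancels from both sides, so regardless of $r$ the two conditions boil down to
\[
j_r\leq i_r\qquad\text{and}\qquad u_{i_r}-i_r\leq \ell_{j_r}-j_r.
\]

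The first inequality is exactly Lemma~\ref{lem:greedyprimal}. For the second, the plan is to apply Lemma~\ref{lem:greedydual} to the dual quotient $M^*\leq_Q {M'}^*$, which holds by~\cite[Proposition~7.4.7]{Bry-86}. By Lemma~\ref{lem:containment} we have $U'\subseteq U$, so $U'\cap[u_{i_r}]$ consists of $\{u_1,\ldots,u_{i_r}\}\setminus\{u_{i_1},\ldots,u_{i_r}\}$ and therefore has exactly $i_r-r$ elements; hence the position of $u_{i_r}$ inside $\overline{U'}$ equals $u_{i_r}-(i_r-r)=u_{i_r}-i_r+r$. An analogous computation yields position $\ell_{j_r}-j_r+r$ for $\ell_{j_r}$ inside $\overline{L'}$. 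The $r$-th inequality in the Gale comparison provided by Lemma~\ref{lem:greedydual} then reads $u_{i_r}-i_r+r\leq \ell_{j_r}-j_r+r$, which is exactly the inequality we need.

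Finally, a short induction on $r$ closes the argument: $M^1=M$ is an LPM, and once we know $(\ell_{j_r},u_{i_r})$ is a good pair of the LPM $M^r$, Proposition~\ref{prop:goodpair} guarantees that $M^{r+1}$ is itself a well-defined LPM (and is a quotient of $M^r$), allowing us to pass to the next step. The only point requiring genuine care is the index translation into the dual, where the identity $|U'\cap [u_{i_r}]|=i_r-r$ is what converts the dual Gale comparison of Lemma~\ref{lem:greedydual} into the primal inequality $u_{i_r}-i_r\leq \ell_{j_r}-j_r$; everything else is bookkeeping.
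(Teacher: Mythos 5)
Your proof is correct and follows essentially the same strategy as the paper's: condition (1) of Definition~\ref{def:good_pair} comes from Lemma~\ref{lem:greedyprimal}, condition (2) from Lemma~\ref{lem:greedydual} applied to the dual quotient. You spell out two pieces of bookkeeping that the paper's terse proof leaves implicit — the observation that the $r-1$ index shifts inside $M^r$ cancel, so the good-pair inequalities are the same in every intermediate LPM as in $M$ itself, and the exact translation $u_{i_r}-(i_r-r)$ of the index of $u_{i_r}$ inside $\overline{U'}$ that converts the Gale comparison of Lemma~\ref{lem:greedydual} into $u_{i_r}-i_r\leq\ell_{j_r}-j_r$ — which makes your version more self-contained and easier to verify.
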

\begin{proof}
 Let $(\ell,u)$ be an element of the greedy pairing. We want to show that $(\ell,u)$ satisfies Definition \ref{def:good_pair}. By Lemma~\ref{lem:greedyprimal} it follows that $(\ell,u)=(\ell_{j_y},u_{i_y})$ for some $\ell_{j_y}\in L, u_{i_y}\in U'$ where $i_y\geq j_y$.

 Now, using Lemma~\ref{lem:greedydual} we have that $(\ell,u)=(\overline{\ell}_{j_r},\overline{u}_{i_r}) $ for some $\overline{\ell}_{j_r}\in\overline{L'}, \; \overline{u}_{i_r}\in\overline{U'} $ with $i_r\leq j_r$. Thinking of $L'$ as a lattice path, this means, that starting from $(0,0)$, there are as many east steps in $L'$ before $\ell$  as there are east steps before $u$ in $U'$. 
Then by the choice of the greedy pairing, we have that $\ell$ is (weakly) to the right of $u$ in $M$. We conclude that $(\ell,u)$ is good.
\end{proof}

The next result will be the remaining ingredient towards the proof of the main theorem in this section.
\begin{lemma}\label{lem:removing}
 Let $M=M[U,L]$, $\ell_i<\ell_{i'}\in L$ and $u_j<u_{j'}\in U$. If $(\ell_i,u_j)$ and $ (\ell_{i'},u_{j'})$ are good then $(\ell_{i},u_{j})$ is good in $M[U\setminus\{u_{j'}\},L\setminus\{\ell_{i'}\}]$ and $(\ell_{i'},u_{j'})$ is good in $M[U\setminus\{u_{j}\},L\setminus\{\ell_{i}\}]$.
\end{lemma}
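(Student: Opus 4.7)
The plan is that this lemma should follow directly from the definitions, since the conditions for being a good pair (Definition~\ref{def:good_pair}) only involve the positions within $U$ and $L$ and the difference in values, and these quantities are tracked easily through the removals. I first need to verify that the two LPMs in the statement are well-defined, and then check the two conditions of Definition~\ref{def:good_pair} one by one.

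For the first claim, I would set $\widetilde{M} = M[U\setminus\{u_{j'}\}, L\setminus\{\ell_{i'}\}] = M[\widetilde{U},\widetilde{L}]$. Since $(\ell_{i'},u_{j'})$ is good, Proposition~\ref{prop:goodpair} guarantees that $\widetilde{M}$ is indeed an LPM of rank $k-1$ (so in particular $\widetilde{U}\leq_G \widetilde{L}$). Because $i<i'$, the element $\ell_i$ still sits in position $i$ of $\widetilde{L}$; because $j<j'$, the element $u_j$ still sits in position $j$ of $\widetilde{U}$. Consequently the two defining inequalities of a good pair for $(\ell_i,u_j)$ in $\widetilde{M}$, namely $i\leq j$ and $u_j-\ell_i\leq j-i$, are literally the same as the good-pair inequalities for $(\ell_i,u_j)$ in $M$, which hold by hypothesis.

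For the second claim, I set $\widehat{M}=M[U\setminus\{u_j\},L\setminus\{\ell_i\}]=M[\widehat{U},\widehat{L}]$, which is an LPM of rank $k-1$ by applying Proposition~\ref{prop:goodpair} to the good pair $(\ell_i,u_j)$. Now because $i<i'$, the element $\ell_{i'}$ appears in position $i'-1$ of $\widehat{L}$; similarly $u_{j'}$ appears in position $j'-1$ of $\widehat{U}$. Hence the good-pair inequalities for $(\ell_{i'},u_{j'})$ in $\widehat{M}$ read $i'-1\leq j'-1$ and $u_{j'}-\ell_{i'}\leq (j'-1)-(i'-1)=j'-i'$, which are exactly the good-pair inequalities for $(\ell_{i'},u_{j'})$ in $M$ that we assumed.

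There is no real obstacle here: the entire argument is a bookkeeping exercise that amounts to observing that removing an element of strictly larger index leaves the positions of smaller-indexed elements unchanged, while removing an element of strictly smaller index shifts every larger-indexed element's position down by one, preserving differences of positions. The only thing one must be careful about is distinguishing the \emph{value} $\ell_i$ (or $u_j$) from its \emph{position} in the sequence, since good pairs are defined with reference to positions.
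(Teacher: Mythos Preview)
Your proof is correct and follows essentially the same approach as the paper's. The paper's proof is a two-sentence geometric paraphrase of exactly the index bookkeeping you carry out: it observes that removing $(\ell_{i'},u_{j'})$ ``does not change the positions of $(\ell_i,u_j)$'' for the first claim, and that removing $(\ell_i,u_j)$ ``shifts both segments $(\ell_{i'},u_{j'})$ one unit to the right and downwards'' for the second; your explicit verification that positions go from $(i',j')$ to $(i'-1,j'-1)$, preserving both defining inequalities, is precisely the content of that sentence.
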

\begin{proof}
The first statement follows since removal of $(\ell_{i'},u_{j'})$ does not change the positions of $(\ell_{i},u_{j})$. The second statement follows because the removal of $(\ell_{i},u_{j})$ shifts both segments $(\ell_{i'},u_{j'})$ one unit to the right and downwards, so if they were good before they are still good afterwards.
\end{proof}
Note that the condition of the comparability of the pairs is necessary (see Figure~\ref{fig:intervalP3}).
Now we are ready to state the main result of this section.

\begin{theorem}\label{thm:lpm_quotients}[Characterizing quotients of LPMs]
Let $M'=M[U',L']$  and $M=M[U,L]$ be LPMs on the ground set $[n]$. We have that $M' \leq_Q M$ if and only if $U'\subseteq U$, $L'\subseteq L$ and the greedy pairing of $(L\setminus L',U\setminus U')$ is good.
\end{theorem}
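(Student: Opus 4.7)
The plan is to split the proof into the forward and backward implications, both of which fall out of the earlier lemmas with little additional work. For the forward direction, I would invoke Lemma~\ref{lem:containment} to obtain $U'\subseteq U$ and $L'\subseteq L$, and then invoke Lemma~\ref{lem:greedypairing} to conclude that the greedy pairing of $(L\setminus L', U\setminus U')$ is good. No further argument is required for this implication.

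For the backward direction, my strategy is induction on $z := |U\setminus U'| = |L\setminus L'|$. The base case $z=0$ forces $M' = M$, so $M'\leq_Q M$ trivially. For the inductive step, write the assumed good greedy pairing as $((\ell_{j_1},u_{i_1}),\ldots,(\ell_{j_z},u_{i_z}))$. By Definition~\ref{def:good_pairing}(b) applied at $r=1$, its first entry $(\ell_{j_1},u_{i_1})$ is a good pair of $M$, so Proposition~\ref{prop:goodpair} produces an LPM $M_1 := M[U\setminus\{u_{i_1}\}, L\setminus\{\ell_{j_1}\}]$ which is a quotient of $M$. The tail $((\ell_{j_2},u_{i_2}),\ldots,(\ell_{j_z},u_{i_z}))$ is, by construction, the greedy pairing of $(L_1\setminus L', U_1\setminus U')$ where $L_1 = L\setminus\{\ell_{j_1}\}$ and $U_1 = U\setminus\{u_{i_1}\}$; it is good in $M_1$ by the definition of a good pairing applied at steps $r=2,\ldots,z$. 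The inductive hypothesis applied to $M'$ and $M_1$ gives $M'\leq_Q M_1$, and composing with $M_1\leq_Q M$ via the transitivity of the quotient relation yields $M'\leq_Q M$. Transitivity itself is immediate from Lemma~\ref{lem:quotient_def}: given $B\in\mathcal{B}(M)$ and $p\notin B$, one first selects $B_1\in\mathcal{B}(M_1)$ with $B_1\subseteq B$ and $(B_1)_p\subseteq B_p$, then $B'\in\mathcal{B}(M')$ with $B'\subseteq B_1$ and $B'_p\subseteq (B_1)_p$, so that $B'\subseteq B$ and $B'_p\subseteq B_p$.

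The conceptual heavy lifting, namely identifying good pairs as the correct combinatorial witness for quotients via Proposition~\ref{prop:goodpair} and conversely extracting them from any quotient relation via Lemmas~\ref{lem:containment},~\ref{lem:greedyprimal},~\ref{lem:greedydual}, and~\ref{lem:greedypairing}, has already been carried out, so the theorem amounts to assembling these pieces. The only small verification inside the induction is that the greedy pairing of the truncated data inside $M_1$ really is the tail of the original greedy pairing of $M$; this is clear because discarding one element from each of $L\setminus L'$ and $U\setminus U'$ does not alter the Gale order of the surviving elements. I do not anticipate any genuine obstacle beyond this bookkeeping.
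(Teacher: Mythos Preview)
Your proposal is correct and follows essentially the same route as the paper: the forward direction is identical (Lemmas~\ref{lem:containment} and~\ref{lem:greedypairing}), and the backward direction is the same induction on $z$ via Proposition~\ref{prop:goodpair} and transitivity. The one minor difference is that the paper invokes Lemma~\ref{lem:removing} in the inductive step, whereas you observe (correctly) that the recursive form of Definition~\ref{def:good_pairing}(b) already makes the tail a good greedy pairing in $M_1$, so that lemma is not strictly needed here.
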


\begin{proof}~

 \noindent $``\Rightarrow"$: This follows as a consequence of Lemmas~\ref{lem:containment} and~\ref{lem:greedypairing}.
 
 \noindent $``\Leftarrow"$: 
 We can induct on the size of $U\setminus U'$. We take a first good pair and get a quotient $N$ of $M$ by  Proposition~\ref{prop:goodpair}. Now, since we had a greedy pairing by Lemma~\ref{lem:removing} all previously good pairs remain good. Moreover, the pairing remains greedy. So we can apply induction and get $M'\leq_Q N$. By transitivity of the quotient relation we get $M'\leq_Q M$.
\end{proof}

\begin{remark}
    Note that there is a diagrammatic characterization of connected flats of LPMs in~\cite[Theorem 3.11]{Bon-06}. Since matroid quotients are well understood at the level of flats~\cite{Bry-86} this might yield another description of LPM quotients.
\end{remark}

\subsection{The quotient poset of LPMs}

Theorem \ref{thm:lpm_quotients} allows us to construct a graded poset $\mathcal P_n$ whose elements are LPMs on $[n]$ and whose ordering relation is $\leq_Q$. The left side of Figure~\ref{fig:fullP3} displays $\mathcal P_3$. It is worth mentioning that the set of matroids $\mathcal M_n$ over the set $[n]$ is endowed with a graded poset structure using the order $\leq_Q$ (see~\cite[Prop. 8.2.5]{Kun86}). However, this construction does not guarantee that the matroids obtained as quotients of a given one remain LPMs. Thus, the properties of the poset $\mathcal P_n$ that we analyze now are not obtained for free. 

\begin{proposition}
The poset $\mathcal P_n$ is graded with minimum $U_{0,n}$ and maximum $U_{n,n}$.  
\end{proposition}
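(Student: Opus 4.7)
The plan is to verify three statements: $U_{0,n}$ is the unique minimum of $\mathcal P_n$, $U_{n,n}$ is the unique maximum, and the matroid rank gives a grading on $\mathcal P_n$.

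For the two extremes I would argue directly from Definition~\ref{def:quotient}. The matroid $U_{0,n}$ has unique basis $\emptyset$: for any LPM $M$, any $B\in\mathcal B(M)$, and any $p\notin B$, the set $B'=\emptyset$ is contained in $B$ and no $q\in B'$ exists, so the implication in Definition~\ref{def:quotient} is vacuously satisfied. Hence $U_{0,n}\leq_Q M$ for every LPM $M$. Symmetrically, $U_{n,n}$ has unique basis $[n]$ and there is no $p\in[n]\setminus[n]$, so the outer universal quantifier is vacuous and $M\leq_Q U_{n,n}$ for every LPM $M$. Since $U_{0,n}$ and $U_{n,n}$ are themselves LPMs, they furnish the minimum and maximum of $\mathcal P_n$.

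For gradedness, I would take $\rho(M):=r(M)$ as candidate rank function. The excerpt already notes that $M'\leq_Q M$ implies $r(M')\leq r(M)$, with equality forcing $M'=M$; hence $\rho$ is strictly monotone along every quotient chain. It therefore suffices to show that whenever $M$ covers $M'$ in $\mathcal P_n$ one has $r(M)=r(M')+1$. Assume for contradiction that $z:=r(M)-r(M')\geq 2$. By Theorem~\ref{thm:lpm_quotients} the greedy pairing of $(L\setminus L', U\setminus U')$ is good; let $(\ell_{j_1},u_{i_1})$ be its first pair. Proposition~\ref{prop:goodpair} produces the LPM $N:=M[U\setminus\{u_{i_1}\}, L\setminus\{\ell_{j_1}\}]$ of rank $r(M)-1$ that satisfies $N\leq_Q M$. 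The remaining $z-1$ pairs of the greedy pairing match $(L\setminus L')\setminus\{\ell_{j_1}\}$ with $(U\setminus U')\setminus\{u_{i_1}\}$, and by Lemma~\ref{lem:removing} they remain good in $N$; a second application of Theorem~\ref{thm:lpm_quotients} then yields $M'\leq_Q N$. Since $r(M')<r(N)<r(M)$, strict monotonicity of $\rho$ gives $M'<_Q N<_Q M$, contradicting that $M$ covers $M'$.

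The only mildly delicate point is verifying that the intermediate matroid $N$ is genuinely strictly between $M'$ and $M$ in the quotient order; this is immediate from the rank inequality $r(M')=r(M)-z\leq r(M)-2<r(N)=r(M)-1$ combined with the strict monotonicity of $\rho$. The main conceptual step I expect to be the crux is packaging the right reduction to covering relations via the greedy pairing; once that is in place, the rest follows directly from the structural results already obtained in Section~\ref{sec:LPMs}.
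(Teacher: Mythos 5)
Your proof is correct and takes essentially the same approach as the paper's own argument: both rely on Theorem~\ref{thm:lpm_quotients} (via the greedy pairing) together with Proposition~\ref{prop:goodpair} and Lemma~\ref{lem:removing} to insert an intermediate LPM whenever a quotient relation has rank gap at least two, and both treat the minimum and maximum via uniformity. The only cosmetic difference is that the paper phrases gradedness as "every chain can be refined until all steps have rank gap one, hence all maximal chains in an interval have the same length," while you phrase the identical fact as "matroid rank is a strictly monotone function whose covers drop by exactly one"; these are equivalent and use the same ingredients. Your explicit verification of the minimum and maximum from Definition~\ref{def:quotient} is slightly more detailed than the paper's "this is clear," but the substance is the same.
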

\begin{proof}
 Let $M'\leq_Q M$ and consider a chain $C=(M'=M_0\leq_Q \ldots \leq_Q M_z=M)$. If two consecutive elements $M_i=M[U_i,L_i]\leq_Q M_{i+1}=M[U_{i+1},L_{i+1}]$ have non-consecutive ranks, i.e., $r(M_{i+1})-r(M_i)>1$, then by Theorem~\ref{thm:lpm_quotients}, the greedy pairing given by $M_i$ and $M_{i+1}$ on $L_{i+1}\setminus L_i,U_{i+1}\setminus U_i)$ allows us to enlarge the chain $C$ by performing quotients pair by pair. Hence, each maximal chain in the interval $[M, M']_Q$ in $\mathcal P_n$ has length $r(M')-r(M)=|U'\setminus U|$. The statement about maximum and minimum is clear, since every matroid on $n$ elements is a quotient of $U_{n,n}$ and has $U_{0,n}$ as a quotient and both are uniform hence LPMs.
\end{proof}


The curious reader might wonder whether $\mathcal P_n$ is a lattice. This, however is not the case. For instance in $\mathcal P_3$ the matroids $M[12,23]$ and $M[13,23]$ are both coverings of the matroids $M[1,3]$ and $M[1,2]$, i.e., they do not have a unique meet (see Figure~\ref{fig:fullP3}). Since the four matroids in the previous example are on two consecutive ranks and $\mathcal P_3$ is a graded subposet of the graded poset $\mathcal M_3$, this also implies that $\mathcal M_3$ is not a lattice, which was probably known before. Since $\mathcal P_3$ and $\mathcal M_3$ are induced subposets on consecutive ranks of $\mathcal P_n$ and $\mathcal M_n$, respectively, $\mathcal P_n$ and $\mathcal M_n$ are not lattices for any $n\geq 3$. 

We also point out that the poset $\mathcal P_3$ considered here is a subposet from the one considered in \cite[Section 3]{BCT} where all positroids on $[3]$, not only LPMs, are considered.

\begin{figure}[htp]
    \centering
    \includegraphics[width=.9\textwidth]{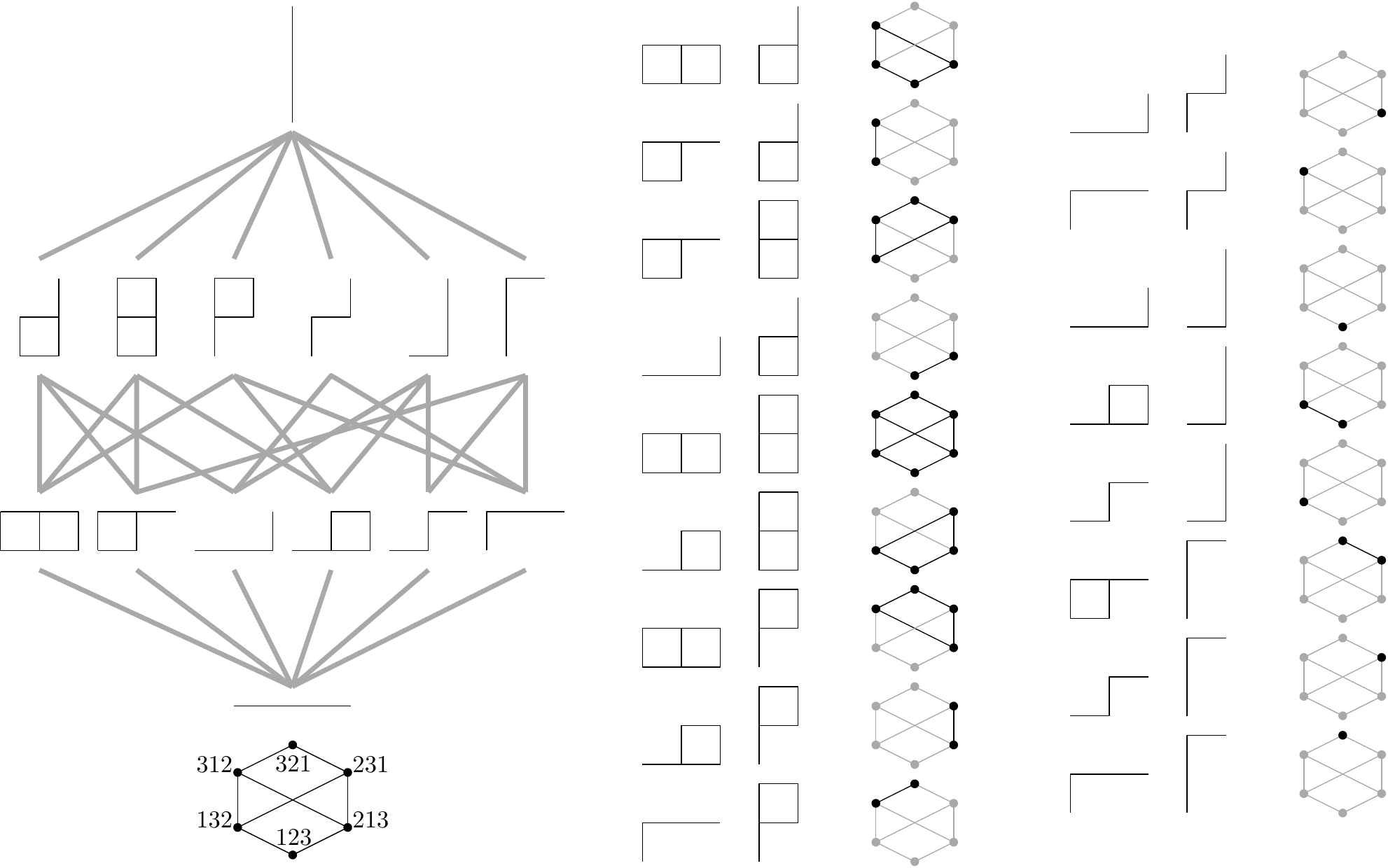}
    \caption{On the upper left the poset $\mathcal P_3$. On the lower left the strong Bruhat order $(S_3,\leq_B)$. On the right the corresponding intervals in $(S_3,\leq_B)$ given by each maximal chain, as explained in Section \ref{sec:Bruhat}.}\label{fig:fullP3}
\end{figure}


Let us explore a bit more the poset $\mathcal P_n$. For $k\in\{1,\dots,n\}$ denote by $a(n,k):=\frac{1}{n}{n \choose k}{n \choose {k-1}}$. The numbers $a(n,k)$ are known as \emph{Narayana numbers}, and count the number of Dyck paths from $0$ to $2n$ with $k$ peaks (see \cite[Exercise 6.36]{EC2} and the right of Figure~\ref{fig:dyck}). 

\begin{corollary}\label{cor:unimodular}
 The poset $\mathcal P_n$ has $a(n+1,z+1)$ elements of rank $k=n-z$, for  each  $z\in\{0,\dots,n\}$.
\end{corollary}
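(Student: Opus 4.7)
The plan is to identify rank-$k$ elements of $\mathcal{P}_n$ with pairs of non-crossing lattice paths in an $(n-k)\times k$ rectangle and count them using Lindström--Gessel--Viennot; the resulting count will equal $a(n+1, k+1) = a(n+1, n-k+1) = a(n+1, z+1)$, the last equality being the Narayana palindromicity $a(N, j) = a(N, N+1-j)$.

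First I would note that $M[U, L]$ determines $(U, L)$ uniquely: $U$ is the Gale-minimum basis and $L$ the Gale-maximum basis of $\mathcal{B}(M[U,L])$. Hence the rank-$k$ elements of $\mathcal{P}_n$ biject with pairs $(U, L) \in \binom{[n]}{k}^2$ satisfying $U \leq_G L$. Via the standard identification of $\binom{[n]}{k}$ with monotone $(N, E)$-paths from $(0,0)$ to $(n-k, k)$, this becomes the number of ordered pairs of such paths with the first weakly above the second.

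Next I would apply the Lindström--Gessel--Viennot lemma via the shift trick: replace the upper path $P_U$ by its translate by $(-1, +1)$, whose endpoints are $(-1, 1)$ and $(n-k-1, k+1)$. Monotonicity of $P_L$ implies that $P_U$ is weakly above $P_L$ if and only if the translate of $P_U$ and $P_L$ share no lattice point. Since a non-intersecting pair with these endpoints must match starts to ends in the only order-respecting way, LGV yields the count
\[
\det \begin{pmatrix} \binom{n}{k} & \binom{n}{k-1} \\ \binom{n}{k+1} & \binom{n}{k} \end{pmatrix} \;=\; \binom{n}{k}^2 - \binom{n}{k-1}\binom{n}{k+1}.
\]

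The final step is the algebraic identity
\[
\binom{n}{k}^2 - \binom{n}{k-1}\binom{n}{k+1} \;=\; \binom{n}{k}^2 \cdot \frac{n+1}{(k+1)(n-k+1)} \;=\; \frac{1}{n+1}\binom{n+1}{k+1}\binom{n+1}{k} \;=\; a(n+1, k+1),
\]
obtained by simplifying the ratios $\binom{n}{k+1}/\binom{n}{k}$ and $\binom{n}{k-1}/\binom{n}{k}$ on the left, together with $\binom{n+1}{k+1}/\binom{n}{k}$ and $\binom{n+1}{k}/\binom{n}{k}$ on the right. I expect the main delicate point to be the LGV shift verification (checking that the translation really converts "weakly above" into "no common lattice point"); a more conceptual but longer alternative would be to construct an explicit bijection between LPMs on $[n]$ of rank $k$ and Dyck paths of semilength $n+1$ having exactly $k+1$ peaks, which are known to be counted by the same Narayana number.
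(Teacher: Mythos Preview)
Your argument is correct and complete: the identification of rank-$k$ LPMs with Gale-ordered pairs $(U,L)$, the shift trick turning ``weakly above'' into ``vertex-disjoint'', the LGV determinant, and the algebraic simplification to $a(n+1,k+1)$ all go through as written, and the Narayana symmetry finishes it.

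However, this is not the route the paper takes. The paper's proof uses the machinery of Theorem~\ref{thm:lpm_quotients}: it realizes each rank-$k$ LPM as a quotient of $U_{n,n}$ via a greedy good pairing $((\ell'_{j_1},u'_{i_1}),\ldots,(\ell'_{j_z},u'_{i_z}))$ on $([n]\setminus L,[n]\setminus U)$, and then bijects these pairings with Dyck paths of semilength $n+1$ having $z+1$ peaks by reading the index pairs $(j_r,i_r)$ as peak positions in the grid. So the paper gives precisely the ``more conceptual but longer alternative'' you mention in your last sentence---an explicit bijection with peak-counted Dyck paths---rather than the determinantal count. The paper in fact acknowledges your LGV approach in a remark immediately following its proof, citing it as an alternative argument. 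What your approach buys is independence from the quotient-characterization theorem and a cleaner computation; what the paper's approach buys is that it exercises the greedy-pairing description just established and suggests how one might attack rank generating functions of arbitrary intervals $[M',M]_Q$ in $\mathcal{P}_n$, not just the whole poset.
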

\begin{proof}
Our proof will be based on two observations:
\begin{itemize}
\item[(a)] By Theorem~\ref{thm:lpm_quotients}, every LPM $M=M[U,L]$ of rank $k=n-z$ corresponds to a greedy pairing $((\ell'_{j_1},u'_{i_1}),\ldots, (\ell'_{j_z},u'_{i_z}))$ on  $([n]\setminus L, [n]\setminus U)$ of length $k$ obtained from $M'=M[\{1,\ldots, n\},\{1,\ldots, n\}]= U_{n,n}$.
\item[(b)] There is a bijection between such greedy pairings $((\ell'_{j_1},u'_{i_1}),\ldots, (\ell'_{j_z},u'_{i_z}))$ and the Dyck paths from $0$ to $2(n+1)$ with $z+1$ peaks.
\end{itemize}

For part (a), if $M=M[U,L]\in\mathcal P_n$  then $M\leq_Q M'$ by Theorem~\ref{thm:lpm_quotients}, $M$ corresponds to the greedy pairing on $([n]\setminus L,[n]\setminus U)$.

For part (b) given a greedy pairing $((\ell'_{j_1},u'_{i_1}),\ldots, (\ell'_{j_k},u'_{i_k}))$, consider the sequence of points $({j_1},{i_1}),\ldots, ({j_z},{i_z})\in [n]\times[n]$. Since this is a greedy pairing we have $j_1<\cdots<j_z$, $i_1<\cdots<i_z$ and $i_r\geq j_r$ for all $1\leq r\leq z$ since all pairs are good. This is, the points sit weakly above the skew diagonal in the grid $[n]\times[n]$ and the upper left quadrant of each point is empty. Note that the properties $i_r\geq j_r$ characterizes all good pairs since we are in $U_{n,n}$. Now, adding points $(0,0)$ and $(n+1,n+1)$ allows to associate $M$ with a Dyck path from $0$ to $2(n+1)$ with $z+1$ peaks. See Figure~\ref{fig:dyck}.

%
%

\end{proof}

\begin{figure}[htp]
    \centering
    \includegraphics[width=.8\textwidth]{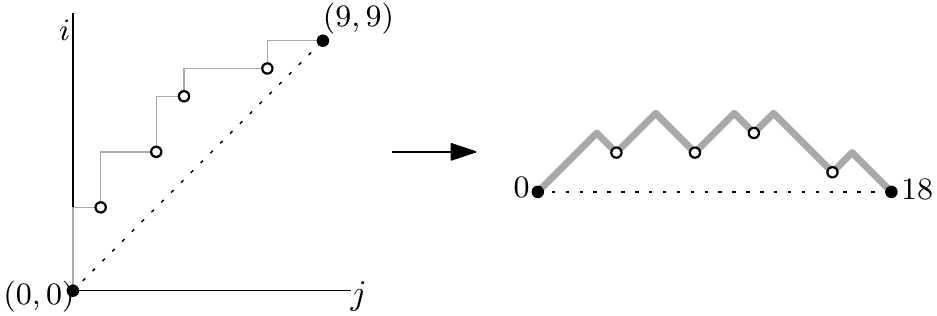}
    \caption{The LPM $M[1246,3568]$ as quotient of $U_{8,8}$ with greedy pairing $(1,3),(3,5),(4,7),(7,8)$ and the corresponding Dyck path.}\label{fig:dyck}
\end{figure}


\begin{remark}
The proof of Corollary~\ref{cor:unimodular} provides an idea of how to analyze the ranks of general intervals in the poset $\mathcal P_n$. However, this Corollary could also be argued as follows.
In order to see that the number of lattice path matroids on $[n]$ having rank $n-k$
is $a(n+1,k+1)$ follows by the fact that the number of pairs of non-crossing lattice paths
from $(0, 0)$ to $(k, n-k)$ with steps $+(1, 0)$ and $+(0, 1)$ can be calculated as a
determinant of a $2 \times 2$ matrix of binomial coefficients using the Lindström–
Gessel–Viennot lemma (with a small tweak to count those lattice path matroids
that have loops/coloops, i.e., those for which the two paths do intersect with
“overlaps”). See~\cite{Kra15}.    
\end{remark}

%

\section{LPMs and the nonnegative flag variety}\label{sec:Bruhat}

In this section we will study maximal chains in the interval $[U_{0,n},U_{n,n}]_Q$ of the poset $\mathcal P_n$. That is, we study {(full) lattice path flag matroids, LPFMs}. Recall that following Definition~\ref{def:flag_matroid} an LPFM is a sequence $\mathcal F:(M_0, M_1, \ldots, M_n)$ of LPMs where $M_0\leq_Q M_1\leq_Q\cdots\leq_Q M_n$ is a maximal chain in $\mathcal P_n$. That is, each $M_i$ is an LPM on $[n]$ and for $i=0,\dots,n-1$:
\begin{itemize}
\item[(a)] $M_i$ is a quotient of $M_{i+1}$
\item[(b)] $r(M_i)+1=r(M_{i+1})$. 
 \end{itemize}

One of the main results of our paper will show us that the family of LPFMs is included in $\mathcal F\ell_n^{\geq 0}$. That is, every  LPFM can be represented by a point in $\mathcal F\ell_n^{\geq 0}$ and thus we can think of the family of LPFMs as properly contained inside $\mathcal F\ell_n^{\geq 0}$. In order to achieve this, we will make use of \emph{matroid polytopes}, defined next. 

\begin{definition}\label{def:polytopes}Let $\{e_1,\dots,e_n \}$ be the canonical basis of $\mathbb R^n$.
\begin{itemize}
\item[(1)] Let $M$ be a matroid on $[n]$ of rank $k$ and let $\mathcal B$ its set of bases. The \emph{matroid polytope of $M$} is the polytope $\Delta_M$ in $\mathbb R^n$ given as the convex hull 
$\Delta_M:=conv\{ e_B\, |\, B\in\mathcal B\}$ where $e_B=\sum_{i\in B}e_i$.

\item[(2)] Let $\mathcal F:(M_0,\dots,M_r)$ be a flag matroid whose constituents $M_i$ are matroids on $[n]$. The \emph{flag matroid polytope} $\Delta_{\mathcal F}$ is  the polytope in $\mathbb R^n$ given by $$\Delta_{\mathcal F}:=conv\{e_{B_0}+\cdots+e_{B_r}\mid \mathcal{B}=(B_0,B_1,\dots,B_r)\text{ is a flag of bases of }\mathcal F\}.$$
\end{itemize}
\end{definition}

For those familiar with polytopes, if $\Delta_i$ denotes the matroid polytope of $M_i$ for each $M_i$ as in (2) of Definition \ref{def:polytopes}, then the  polytope $\Delta_{\mathcal F}$ is the Minkowski sum $\Delta_1+\cdots+\Delta_n$ (see \cite[Cor. 1.13.5]{BGW}). Also, notice that Definition \ref{def:polytopes}(2) does not assume the flag is full, as $r\leq n$. When $r=n$ then $\Delta_{\mathcal F}$ is such that each of its vertices is a permutation of the point $(1,2\dots,n)$. In particular if $\mathcal F$ is the \emph{uniform flag matroid} $\mathfrak{U}_n=(U_{0,n},U_{1,n},\dots,U_{n,n})$ then $\Delta_{\mathcal F}$ has $n!$ vertices, given by all the permutations of $(1,2\dots,n)$. That is, the polytope $\Delta_{\mathcal F}$ is the permutahedron. Now, notice that since $U_{n,n}$ has only one basis $B=\{12\dots n \}$ then $e_B=(1,1,\dots,1)$. Thus, 
any full flag  matroid $\mathcal F=(M_0,M_1,\dots,M_{n-1},U_{n,n})$ is such that its polytope $\Delta_{\mathcal F}$ is a translation of the polytope $\Delta_{\mathcal F'}$, by $(1,\dots,1)$, where $\mathcal F'=(M_0,M_1,\dots,M_{n-1})$, and the latter polytope has vertices which are permutations of $(0,1,\dots,n-1)$. 

\begin{example}\label{ex:pol_in_3}
Consider the LPFM given by $\mathcal F:M_0\leq_Q M_1\leq_Q M_2\leq_Q M_3$ where $M_1=U_{1,3}$, $M_2=M[13,23]$ and $M_3=U_{3,3}$. Then the flags of bases of $\mathcal F$ are
\begin{align*}
1\subset 13\subset123 \qquad & 2\subset 23\subset123\\
3\subset 13\subset123 \qquad & 3\subset 23\subset123.
\end{align*}
Each of these flags gives rise, respectively, to the points $(3,1,2),(1,3,2),(2,1,3),(1,2,3)$ in $\mathbb R^3$. Thus the polytope $\Delta_{\mathcal F}$ is the convex hull of these four points and it is depicted in Figure~\ref{fig:fullP3} along with all the polytopes arising from full flags of LPMs over the set $[3]$.
\end{example}

\begin{definition}\label{def:strong_bruhat}
Let $u,v\in S_n$. We say that \emph{$v$ covers $u$} in the (strong) \emph{Bruhat order}, denoted $u\prec_B v$ if $v=u(i,j)$ for some transposition $(i,j)$ with $i<j$ such that if $i<k<j$ then $u(k)<u(i)$ or $u(k)> u(j)$. The Bruhat order of $S_n$ is the transitive closure of this covering relation.
\end{definition} 

The next main result in this paper shows that every flag matroid polytope $\Delta_\mathcal F$  over $[n]$, where $\mathcal F:(M_0, M_1, \ldots, M_n)$ is an LPFM, is such that (its $1$-skeleton) is an interval in the strong {Bruhat order} $\leq_B$ of $S_n$. The importance of this result is that, every interval in the Bruhat order can be thought of as the 1-skeleton of a flag matroid that arises as a point of $\mathcal F\ell_n^{\geq 0}$. In Example \ref{ex:pol_in_3} the 1-skeleton of $\Delta_{\mathcal F}$ corresponds to the interval $[123,312]_B$ in $S_3$. 
Conversely, as shown in \cite[Proposition 2.7]{TW15} and~\cite[Theorem 6.10]{KW15}, every flag matroid $\mathcal F$ arising from a point in $\mathcal F\ell_n^{\geq 0}$ is such that its flag matroid polytope is (its 1-skeleton) an interval in the (strong) Bruhat order $S_n$. This correspondence is found in terms of moment maps in the flag variety as follows.
\begin{theorem}\cite[Theorem 6.10]{KW15}
    Let $g\in \mathcal F\ell_{v,w}^{>0}$. Then its polytope image under the moment map is the polytope $P_{v,w}$ whose vertices are $\{z:v\leq_B z\leq_B w\}$.
\end{theorem}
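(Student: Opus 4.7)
The plan is to apply the Atiyah--Guillemin--Sternberg convexity theorem to the closed Richardson variety $X_v^w \subset \mathcal F\ell_n$. Since $\mathcal F\ell_{v,w}^{>0}$ sits as the top-dimensional totally positive stratum of $X_v^w$, the key will be to identify the $T$-fixed points of $X_v^w$ with the Bruhat interval $[v,w]_B$, and then verify that the torus orbit closure of a totally positive point $g$ already sees all of them.

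First I would recall that the moment map $\mu$ on $\mathcal F\ell_n$ associated to the diagonal torus $T$ sends each $T$-fixed point $zB$ (for $B$ the standard Borel) to a permuted weight $z\cdot\rho$, with $\rho$ a strictly dominant regular weight; the $T$-fixed points of $\mathcal F\ell_n$ are thus in bijection with $S_n$ via $z\mapsto zB$. Next, using the Bruhat decomposition $\mathcal F\ell_n=\bigsqcup_{z}C_z$ together with $X_w=\bigsqcup_{z\leq_B w}C_z$ and the opposite version $X^v=\bigsqcup_{z\geq_B v}C^z$, one obtains that the $T$-fixed points of $X_v^w = X_w\cap X^v$ are exactly the permutations $z$ with $v\leq_B z\leq_B w$. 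In particular their moment images are exactly the candidate vertices of $P_{v,w}$.

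With these two ingredients, the Atiyah--Guillemin--Sternberg theorem guarantees that for any $h\in X_v^w$ the image $\mu(\overline{T\cdot h})$ is a convex polytope whose vertex set is contained in $\{z\cdot\rho:v\leq_B z\leq_B w\}$. The remaining, and main, obstacle is to show that for $g\in \mathcal F\ell_{v,w}^{>0}$ \emph{every} such fixed point actually lies in $\overline{T\cdot g}$, so that no vertices of $P_{v,w}$ are lost. For this I would invoke a Marsh--Rietsch type parametrization of the totally positive cell by positive real parameters, and argue that sending any chosen subset of these parameters to $0$ or $\infty$ through an appropriate one-parameter subgroup of $T$ yields precisely the $T$-fixed point $zB$ for the $z\in[v,w]_B$ prescribed by that subset; this is where total positivity of $g$ is essential, since it ensures that all such degenerations remain inside $X_v^w$ rather than escaping to a proper sub-Richardson. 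Carrying out this limit procedure for every $z$ in the interval then shows that $\overline{T\cdot g}$ contains every vertex of $P_{v,w}$, whence $\mu(\overline{T\cdot g})=P_{v,w}$.
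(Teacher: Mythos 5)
The paper does not prove this statement: it is imported verbatim as a citation (\cite[Theorem 6.10]{KW15}) and used as a black box, so there is no ``paper's own proof'' to compare against line by line. I will therefore assess your proposal on its own merits.

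Your high-level strategy --- realize the Bruhat interval polytope as the moment image of the torus orbit closure $\overline{T\cdot g}$, identify $(X_v^w)^T$ with $[v,w]_B$ via the Bruhat/Richardson decomposition, and then argue total positivity forces $\overline{T\cdot g}$ to contain \emph{all} of these fixed points --- is the right geometric picture, and the first two steps are sound (Atiyah's convexity theorem does extend to possibly singular projective $T$-varieties, so applying it to $\overline{T\cdot g}\subseteq X_v^w$ is legitimate, and $(X_v^w)^T=\{zB: v\leq_B z\leq_B w\}$ is standard). The genuine gap is in the third step, which you yourself flag as the ``main obstacle'' but then only sketch. The issue is that the degenerations you need are along one-parameter subgroups of $T$, whereas Marsh--Rietsch parametrizes $\mathcal F\ell_{v,w}^{>0}$ by free positive parameters attached to factored reduced-word data; these two coordinate systems are not the same, and ``sending a chosen subset of parameters to $0$ or $\infty$'' does not a priori correspond to a $T$-limit of the fixed point $g$. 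You would need to analyze how $T$ acts on the Marsh--Rietsch coordinates (the torus rescales them by characters that depend on the chosen reduced subword) and exhibit, for each $z\in[v,w]_B$, a cocharacter $\lambda$ of $T$ with $\lim_{t\to 0}\lambda(t)\cdot g = zB$; that this can always be done, and that no fixed point is skipped, is precisely the content that the cited theorem packages. Without that bookkeeping, your argument only yields $\mu(\overline{T\cdot g})\subseteq P_{v,w}$, not equality.

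An alternative route that sidesteps the torus-limit analysis, and is closer in spirit to how these results are typically established, is to pass through matroids: Gelfand--Goresky--MacPherson--Serganova identifies $\mu(\overline{T\cdot g})$ with the flag matroid polytope of the flag matroid represented by $g$, whose vertices are indexed by flags of bases, i.e.\ permutations $z$. The claim then reduces to showing that a permutation $z$ is a flag of bases iff $v\leq_B z\leq_B w$, which can be checked directly on Pl\"ucker coordinates: total positivity of $g$ pins down exactly which Pl\"ucker coordinates of each $g_j$ are nonzero. This combinatorial route avoids the need to exhibit explicit degenerations and connects more directly to the Bruhat-interval description.
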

    
Polytopes of the form $P_{v,w}$ are referred to as \emph{Bruhat interval polytopes} in \cite{TW15}.

Let $\mathcal{F}:(M_0, M_1, \ldots, M_n)$ be an LPFM. Given two flags of bases of $\mathcal F$, namely $\mathfrak B:(B_0, B_1, \ldots, B_n)$ and $\mathfrak B':(B'_0, B'_1, \ldots, B'_n)$, we say that $\mathfrak B$ is \emph{smaller} than $\mathfrak B'$ if and only if $B_i\leq_G B'_i$ for all $i\in[n]$. We denote this as  $\mathfrak B\leq_G\mathfrak B'$. We say that the permutation $\pi=\pi_{\mathfrak B}$ associated to the flag of bases $\mathfrak B$ is the permutation in $S_n$ such that $\pi(i)=B_i\setminus B_{i-1}$, for $i=1,\dots,n$. We refer to $\pi$
 as the \emph{Gale permutation of $\mathfrak B$}. On the other hand, the \emph{Bruhat permutation of $\mathfrak B$} is the permutation $\tau=\tau_{\mathfrak B}$ in $S_n$ such that $\tau(i)=\overline\pi^{-1}(i)$ where $\overline\pi(i)=\pi(n-i+1)$. It is worth pointing out that such $\mathfrak B$ determines $\pi$ (and thus $\tau$) uniquely. Thus  we will say that  $\pi=\pi_{\mathfrak{B}}\leq_G\pi_{\mathfrak{B}'}=\pi'$ if and only if $\mathfrak B\leq_G\mathfrak B'$, where  $\mathfrak B$ and  $\mathfrak B'$ are flags of bases of the uniform flag matroid $\mathfrak U_n=(U_{0,n},U_{1,n},\dots,U_{n,n})$.




 \begin{example}
 Consider the LPFM $\mathcal F:M_0\leq M_1\leq_Q M_2\leq_Q M_3\leq_Q U_{4,4}$ where $M_1=M[1,3]$, $M_2=M[14,34]$ and $M_1=M[124,134]$. The polytope $\Delta_{\mathcal F}$ is the convex hull of 6 points in $\mathbb R^4$. Each point arises from each flag of bases of $\mathcal F$, which the reader can compute.\footnote{We have omitted $U_{4,4}$ as it provides no further information.}
In Figure \ref{fig:exP4} we depict on the left the constituents of $\mathcal F$. Below each of them appear their bases set. Also, each covering relation $\prec_q$ is labelled by the corresponding good pair.
On the right hand side appears the interval $[1243,4213]_B$ in $S_4$ whose permutations correspond, bijectively, to the vertices of $\Delta_{\mathcal F}$, i.e. to the collection of flags of bases of $\mathcal F$. For instance the flag of bases $\mathfrak B:(3,34,134,1234)$ is such that its Bruhat permutation $\tau=2143$ corresponds to the vertex $(2,1,4,3)$. On the other hand, its Gale permutation is  $\pi=3412$. If $\mathfrak B':(3,34,234,1234)$ then its corresponding Gale and Bruhat permutations are $\pi'=3421$, $\tau'=1243$. Moreover,  $\pi'\geq_G\pi$ and $\tau'\leq_B\tau$.

\begin{figure}[htp]
    \centering
    \includegraphics[width=.9\textwidth]{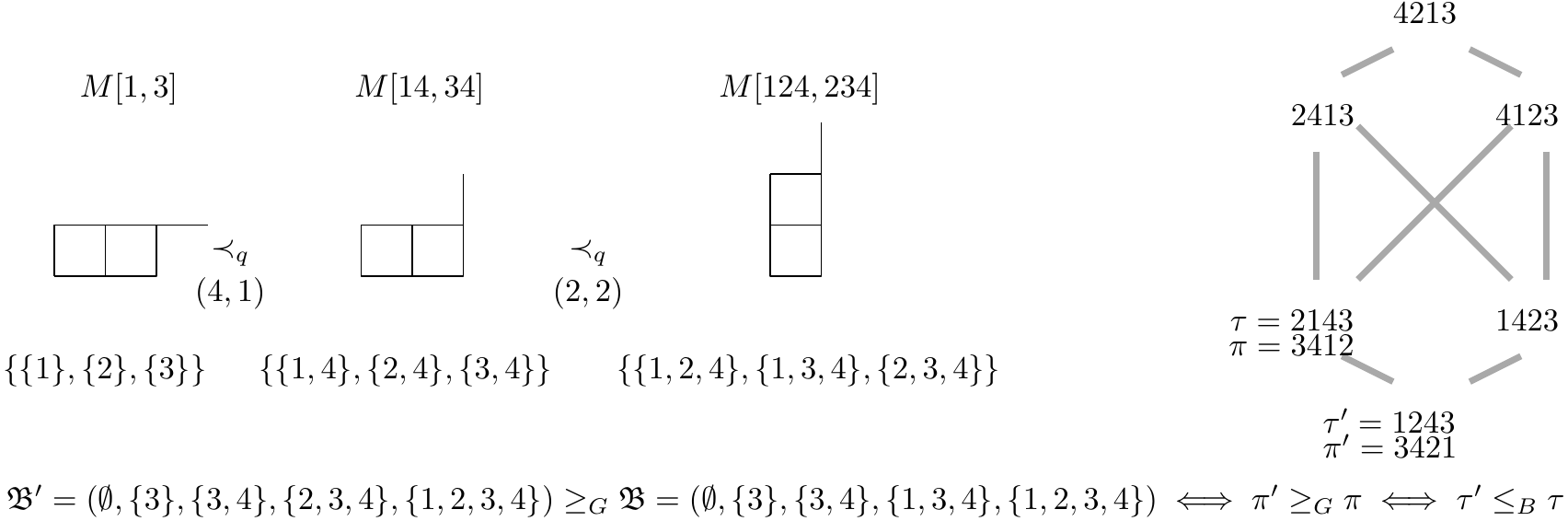}
    \caption{An LPFM and its flag matroid polytope. Its vertices constitute the interval $[1243,4213]_B$ in the Bruhat order.}\label{fig:exP4}
\end{figure}
\end{example}

\begin{lemma}\label{lemma:bruhat_lpm}
Let $\tau', \tau\in S_n$ the Bruhat permutations of flags $\mathfrak B', \mathfrak B$, respectively. If $\tau'\prec_B\tau$ then $\mathfrak B'\geq_G \mathfrak B$.\end{lemma}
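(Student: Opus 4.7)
The plan is to unpack the cover relation $\tau' \prec_B \tau$ in terms of the Gale permutations $\pi'$ and $\pi$ via the correspondence $\tau = \overline{\pi}^{-1}$, and then read off the resulting change on the initial segments $B_k = \{\pi(1),\ldots,\pi(k)\}$ that determine the flag.

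First I would expand Definition~\ref{def:strong_bruhat}: the cover $\tau' \prec_B \tau$ provides positions $i<j$ with $\tau(i) = \tau'(j) > \tau'(i) = \tau(j)$ and $\tau(\ell) = \tau'(\ell)$ for every other $\ell$. Since $\tau(\alpha) = \overline{\pi}^{-1}(\alpha) = n - \pi^{-1}(\alpha) + 1$, this translates to $\pi^{-1}(i) = \pi'^{-1}(j) =: a$, $\pi^{-1}(j) = \pi'^{-1}(i) =: b$ with $a<b$ (coming from $\tau'(i) < \tau'(j)$), and $\pi^{-1}(\alpha) = \pi'^{-1}(\alpha)$ for every other $\alpha \in [n]$. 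One-line-diagram-wise this says that $\pi'$ has $j$ at position $a$ and $i$ at position $b$, while $\pi$ swaps those two values and leaves every other entry fixed; equivalently, $\pi$ places the smaller of $\{i,j\}$ earlier.

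The Gale comparison is then immediate. For $k<a$ the length-$k$ prefixes of $\pi$ and $\pi'$ are literally equal, so $B_k = B'_k$; the same holds for $k \geq b$, since both prefixes then contain $\{i,j\}$ together with identical remaining entries. For $a \leq k < b$ the prefix of $\pi'$ contains $j$ but not $i$, while the prefix of $\pi$ contains $i$ but not $j$, so $B_k = (B'_k \setminus \{j\}) \cup \{i\}$. Since $i<j$, sorting the two sets and comparing entry by entry yields $B_k \leq_G B'_k$. Hence $\mathfrak{B} \leq_G \mathfrak{B}'$, as claimed.

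The only delicate point is the bookkeeping in the translation $\tau = \overline{\pi}^{-1}$; once this is executed, no further property of $\leq_B$ is invoked. In particular, the restriction on intermediate indices $i<k<j$ from Definition~\ref{def:strong_bruhat} plays no role, so the same argument in fact upgrades the conclusion to: $\tau' \leq_B \tau$ implies $\mathfrak{B}' \geq_G \mathfrak{B}$, by iterating along any saturated chain in $(S_n,\leq_B)$.
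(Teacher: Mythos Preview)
Your proof is correct and follows essentially the same route as the paper's: unpack the Bruhat cover $\tau=\tau'(i,j)$, translate via $\tau(\alpha)=n-\pi^{-1}(\alpha)+1$ to see that $\pi$ and $\pi'$ differ by a single swap of values $i<j$ between positions $a<b$, and conclude that $B_k=(B'_k\setminus\{j\})\cup\{i\}$ for $a\le k<b$ and $B_k=B'_k$ otherwise. Your closing remark that the intermediate-index condition from Definition~\ref{def:strong_bruhat} is not used, so the implication extends to all of $\leq_B$ by iterating along a saturated chain, is a valid and worthwhile observation.
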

\begin{proof} Let $\tau'=i_1\cdots i_n$.
Now, $\tau'\prec_B\tau$ if and only if $\tau=\tau'(r,s)$ for some $r<s$ such that $i_r<i_s$ and if $r<t<s$ then $i_t\notin[i_r,i_s]$.  Thus $\tau$ is obtained from $\tau'$ by exchanging positions $r$ and $s$. In view of this, we get that $B_j$, the $j$-th component of $\mathfrak B$, satisfies
$$\begin{cases}
B_{j}=B_{j}'\setminus\{s\}\cup\{r\} & \text{ if }j\in\{n-i_s+1,\dots,n-i_r\}\\
B_{j}=B_{j}'& \text{ otherwise}
\end{cases}.$$
The result follows. 
\end{proof}


\begin{lemma}\label{lem:coveringG}
Let $\mathfrak B, \mathfrak B'$ flags of bases of $\mathfrak{U}_n$ and $\pi:=\pi_{\mathfrak B}\prec_G\pi_{\mathfrak B'}=:\pi'$. Then there are $i<j\in[n]$ such that $\pi(k)=\pi'(k)$ for all $k\in[n]\setminus\{i,j\}$, $\pi(i)<\pi'(i)$, $\pi(i,j)=\pi'$ and $\pi'(l)\notin[\pi(i),\pi'(i)]$ for all $i<l<j$.
\end{lemma}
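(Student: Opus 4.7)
The plan is to reduce the lemma to the classical characterization of covering relations in the strong Bruhat order on $S_n$. The key observation is that, for Gale permutations $\pi$ of flags of $\mathfrak{U}_n$, the Gale order $\leq_G$ on flags coincides with the strong Bruhat order $\leq_B$ on $S_n$ under the identification $\mathfrak{B}\leftrightarrow\pi_{\mathfrak B}$. Indeed, since $B_k=\{\pi(1),\ldots,\pi(k)\}$, the condition $B_k\leq_G B'_k$ for all $k$ says exactly that the sorted initial segments of $\pi$ are componentwise $\leq$ those of $\pi'$, which is Ehresmann's tableau criterion for $\pi\leq_B\pi'$ (see, e.g., Bj\"orner--Brenti, \emph{Combinatorics of Coxeter Groups}, Theorem~2.6.3).

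With this identification, $\pi\prec_G\pi'$ is equivalent to $\pi\prec_B\pi'$, and the classical description of Bruhat covers---which is the content of Definition~\ref{def:strong_bruhat}---yields directly the required transposition $(i,j)$: there exist $i<j$ with $\pi'=\pi\circ(i,j)$, $\pi(i)<\pi(j)=\pi'(i)$, and no $i<l<j$ satisfies $\pi(l)\in(\pi(i),\pi(j))$. Since $\pi(l)=\pi'(l)$ for $l\notin\{i,j\}$ and the endpoints $\pi(i),\pi'(i)$ appear elsewhere in $\pi'$, the open--closed distinction is immaterial, and the condition $\pi'(l)\notin[\pi(i),\pi'(i)]$ of the lemma follows.

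If one prefers a self-contained proof avoiding the Ehresmann criterion, the outline would be: (i) let $i$ be the smallest index with $\pi(i)\neq\pi'(i)$ and deduce $\pi(i)<\pi'(i)$ by comparing $B_i\leq_G B'_i$ given $B_{i-1}=B'_{i-1}$; (ii) define $j$ as the unique index with $\pi(j)=\pi'(i)$, necessarily $j>i$; (iii) set $\sigma=\pi\circ(i,j)$ and show $\pi<_G\sigma\leq_G\pi'$, whence the cover hypothesis forces $\sigma=\pi'$; (iv) deduce the interval condition by observing that any $\pi'(l)\in(\pi(i),\pi'(i))$ with $i<l<j$ would yield a strict intermediate obtained from a partial swap, contradicting the cover hypothesis. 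The main obstacle is step~(iii), specifically proving $\sigma\leq_G\pi'$: this inequality is \emph{not} automatic from $\pi\leq_G\pi'$ alone (counterexamples arise as soon as the Bruhat-length gap of $\pi<_G\pi'$ exceeds one), so the cover hypothesis must be used substantively, essentially via a lifting-property argument in the Bruhat order. This is precisely what the Ehresmann criterion encapsulates, which is why the route via the Bruhat identification is cleanest.
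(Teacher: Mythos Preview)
Your argument via the Ehresmann tableau criterion is correct and considerably cleaner than the paper's proof. The paper does not invoke the identification of $\leq_G$ on flags with $\leq_B$ on the Gale permutations $\pi$; instead it argues the contrapositive directly by a somewhat laborious case analysis: assuming $\pi$ and $\pi'$ differ in at least three positions (or that some intermediate value $\pi(\ell)$ lies in $[\pi(i),\pi(j)]$), it exhibits an explicit intermediate permutation witnessing that $\pi'$ does not cover $\pi$. Your route trades this case work for a single well-known citation (B\"orner--Brenti, Theorem~2.6.3) and the cover description already recorded as Definition~\ref{def:strong_bruhat}.

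It is worth noting what your approach buys beyond this lemma. Since you have identified $(\mathfrak{U}_n,\leq_G)\cong(S_n,\leq_B)$ via $\mathfrak{B}\mapsto\pi_{\mathfrak{B}}$, and since $\tau_{\mathfrak{B}}=w_0\pi_{\mathfrak{B}}^{-1}$ with $w_0$ the longest element, the standard facts that $\sigma\mapsto\sigma^{-1}$ preserves and $\sigma\mapsto w_0\sigma$ reverses Bruhat order immediately yield the equivalence of (i)--(iii) in Theorem~\ref{thm:iso_posets} as well; Lemmas~\ref{lemma:bruhat_lpm} and~\ref{lemma:lpm_bruhat} then become redundant. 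The paper's self-contained combinatorial arguments avoid this external dependence, which may be the authors' intent, but your reduction makes transparent that the whole of Theorem~\ref{thm:iso_posets} is a reformulation of classical Bruhat-order automorphisms.

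Your third paragraph is commentary rather than proof; the difficulty you flag in step~(iii) is real, and the paper sidesteps it precisely by working contrapositively rather than constructing the intermediate $\sigma$ and comparing it to $\pi'$.
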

\begin{proof} Let $\pi=a_1\cdots a_n\in S_n$ and let $\pi'\neq \pi$ and denote by $B_{r,\pi}$ the $r$th set of $\mathfrak B$, and similar for $\pi'$. That is, there is at least an index $i$ such that $a_i\neq \pi'(i)$. Notice then that there must be another index $j\neq i$ with the same property and we may assume $i<j$. Now we prove the contrapositive. That is we assume that if $\pi,\pi'$ differ in more than 2 positions, or, if there exists $\ell$ such that $i<\ell<j$ and $a_\ell\in[a_i,a_j]$, then $\pi'$ does not cover $\pi$ in the Gale order. 
Suppose that $\pi'$ and $\pi$ differ in at least 3 places. Pick the first 3 positions where they differ, say $i<k<j$. Thus in one-line notation the first $j$ values in $\pi'$ are $b_1\cdots b_i\cdots b_k\cdots b_j$ where $b_r=a_r$ for all $r\in[j]\setminus\{i,k,j \}$. Moreover, $b_ib_kb_j=a_ka_ja_i$ or $b_ib_kb_j=a_ja_ia_k$, otherwise the choice of $i,k,j$ is contradicted. If $a_i<a_k<a_j$ then $\pi<_G\pi'$ but is not a covering since, for instance, $\pi<_G\pi(i,k)<_G\pi'$. This can be seen by noticing that $B_{r,\pi}\leq_G B_{r,\pi'}$ for all $r$, by the relative order of $a_i<a_k<a_j$. If $a_i<a_k>a_j$ then $\pi,\pi'$ are not comparable if $a_j<a_i$, since $B_{r,\pi}\leq_G B_{r,\pi'}$ for $r<i$ but $B_{i,\pi}>_G B_{i,\pi'}$. A similar analysis holds for the remaining cases that compare the relative order of the triple $a_ia_ka_j$, leading us to either of the two conclusions displayed here. Hence, if $\pi,\pi'$ differ in more than 2 positions then $\pi'$ does not cover $\pi$. 
Now, we assume that there exists $\ell$ such that $i<\ell<j$, $a_\ell\in[a_i,a_j]$, and $\pi(i,j)=\pi'$. One checks that for all $r\in [n]$ $B_{r,\pi}\leq_G B_{r,\tau}\leq_G B_{r,\pi'}$ where $\tau=\pi(i,l)$. Therefore $\pi'$ does not cover $\pi$. The claim follows. 
 \end{proof}

\begin{lemma}\label{lemma:lpm_bruhat}
Let $\mathfrak B, \mathfrak B'$ be flags of bases of $\mathfrak{U}_n$ and let $\pi:=\pi_{\mathfrak B},\pi':=\pi_{\mathfrak B'}$ their respective Gale permutations, and $ \tau:=\tau_{\mathfrak B},\tau':=\tau_{\mathfrak B'}$ their corresponding Bruhat permutations. Suppose that there are $i<j\in[n]$ such that $\pi(k)=\pi'(k)$ for all $k\in[n]\setminus\{i,j\}$, $\pi(i)<\pi'(i)$, $\pi(i,j)=\pi'$ and $\pi'(k)\notin[\pi(i),\pi'(i)]$ for all $i<k<j$. Then $\tau'\prec_B\tau$.
\end{lemma}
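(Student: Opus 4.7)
The plan is to translate the hypothesis, which concerns the Gale permutations $\pi, \pi'$, directly into the cover condition for the Bruhat permutations $\tau, \tau'$ by unpacking the identity $\tau(c) = \overline{\pi}^{-1}(c) = n+1 - \pi^{-1}(c)$, which follows immediately from the definitions in the paragraph preceding Lemma~\ref{lemma:bruhat_lpm}. To set notation I would put $a := \pi(i) = \pi'(j)$ and $b := \pi(j) = \pi'(i)$; since $\pi(i) < \pi'(i)$, one has $a < b$.

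First I would observe that $\pi^{-1}$ and $\pi'^{-1}$ agree outside the two values $a, b$, with $\pi^{-1}(a) = i = \pi'^{-1}(b)$ and $\pi^{-1}(b) = j = \pi'^{-1}(a)$. Applying $\tau(c) = n+1-\pi^{-1}(c)$ (and likewise for $\tau'$), this shows $\tau(c) = \tau'(c)$ for every $c \notin \{a,b\}$, while $\tau(a) = n+1-i = \tau'(b)$ and $\tau(b) = n+1-j = \tau'(a)$. Hence $\tau = \tau'(a,b)$ with $a<b$, and moreover $\tau'(a) = n+1-j < n+1-i = \tau'(b)$ since $i<j$.

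It then remains to verify the ``no intermediate values'' condition of Definition~\ref{def:strong_bruhat}: for every $c$ with $a<c<b$, one needs $\tau'(c) \notin (\tau'(a), \tau'(b)) = (n+1-j,\ n+1-i)$. Rewriting via $\tau'(c) = n+1-\pi'^{-1}(c)$, this is equivalent to $\pi'^{-1}(c) \notin (i,j)$ for each such $c$, that is, no index $k$ with $i<k<j$ satisfies $\pi'(k) \in (a,b)$. The hypothesis $\pi'(k) \notin [\pi(i),\pi'(i)] = [a,b]$ for $i<k<j$ gives exactly this, so the cover condition holds and $\tau' \prec_B \tau$. The proof is essentially bookkeeping once the formula $\tau(c) = n+1-\pi^{-1}(c)$ is in hand; the one point to be careful about is that the positions of $\tau,\tau'$ that get swapped are $a=\pi(i)$ and $b=\pi'(i)$ rather than $i$ and $j$, and the cover condition on $\tau'$ between these positions translates precisely (not approximately) to the hypothesis on $\pi'$ between positions $i$ and $j$.
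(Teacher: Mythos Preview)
Your proof is correct and follows essentially the same approach as the paper: both arguments translate the hypothesis on $\pi,\pi'$ into the Bruhat cover condition for $\tau',\tau$ via the relation between Gale and Bruhat permutations. Your use of the closed formula $\tau(c)=n+1-\pi^{-1}(c)$ makes the bookkeeping cleaner than the paper's version, which writes $\pi$ out in one-line notation and splits into cases according to whether the intermediate values $b_\ell$ lie below $a_i$ or above $a_j$, but the substance is the same.
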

\begin{proof} Set $r=i-1$ and $s=j-1$.
By assumption we can write $\pi=a_1\cdots a_r a_i b_1\cdots b_s a_j c_1\cdots c_t$ and $\pi'=a_1\cdots a_r a_j b_1\cdots b_s a_i c_1\cdots c_t$, in one-line notation, where $b_{\ell}\notin[a_i,a_j]$. Therefore $\tau$ and $\tau'$ coincide for every $k\in[n]\setminus\{a_i,a_j\}$. It holds that $\tau(a_i)=n-r$, $\tau(a_j)=n-(r+s)$ and $\tau'(a_i,a_j)=\tau$. Thus we only need to show that $\tau(k)\notin[n-r, n-(r+s)]$ for $a_i<k<a_j$.
There are two cases to consider. 

\emph{Case 1:} $b_{\ell}<i$. The values belonging to the interval $[n-r, n-(r+s)]$ in $\tau$ correspond precisely to the positions $b_{\ell}$, as $\tau$ records the order of appearance of each element from $\pi$. Hence, the values in $[n-r, n-(r+s)]$ are assigned to positions to the left of $a_i$ in $\tau$. We conclude that $\tau'\prec_B\tau$.

\emph{Case 2:}  $b_{\ell}>j$. This is analogous to Case 1. In this situation  the values in $[n-r, n-(r+s)]$ are assigned to positions to the right of $a_j$ in $\tau$. The result is proven.
\end{proof}

The following result asserts that there is an order-reversing (or antitone) map between the Bruhat order $(S_n,\leq_B)$ and the Gale order $(\mathfrak U_n,\leq_G)$.

\begin{theorem}\label{thm:iso_posets}
Let $\mathfrak B,\mathfrak B'$ be flags of bases of $\mathfrak{U}_n$ and $\pi_{\mathfrak B},\pi_{\mathfrak B'}$ and $\tau_{\mathfrak B},\tau_{\mathfrak B'}$ their Gale and Bruhat  permutations as above. The following are equivalent:
\begin{itemize}
 \item[(i)]  $\mathfrak B\leq _G \mathfrak B'$,
 \item[(ii)] $\pi_{\mathfrak B}\leq _G\pi_{\mathfrak B'}$,
 \item[(iii)] $\tau_{\mathfrak B}\geq_B\tau_{\mathfrak B'}$.
\end{itemize}
\end{theorem}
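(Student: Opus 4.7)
The plan is to chain the three preceding lemmas along saturated chains in the Bruhat and Gale orders on $S_n$. The equivalence (i) $\Leftrightarrow$ (ii) requires no real argument: by the definition given just before the theorem, $\pi_{\mathfrak B}\leq_G \pi_{\mathfrak B'}$ is declared to mean precisely $\mathfrak B \leq_G \mathfrak B'$, so these two statements are literally synonymous. Hence the content of the theorem is the two-way equivalence (i)/(ii) $\Leftrightarrow$ (iii), and I would prove the two implications separately, in each direction reducing to a single covering relation.

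For (iii) $\Rightarrow$ (i), I would use that $(S_n,\leq_B)$ is graded by the length function, so the relation $\tau_{\mathfrak B'}\leq_B\tau_{\mathfrak B}$ refines to a sequence of Bruhat covers $\tau_{\mathfrak B'}=\sigma_0\prec_B\sigma_1\prec_B\cdots\prec_B\sigma_m=\tau_{\mathfrak B}$. The assignment $\mathfrak C\mapsto\tau_{\mathfrak C}$ is a bijection between flags of $\mathfrak{U}_n$ and $S_n$ (it is the composition of the Gale-permutation bijection with the involutions $\pi\mapsto\bar\pi\mapsto\bar\pi^{-1}$), so each $\sigma_k$ lifts to a unique flag $\mathfrak D_k$. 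Applying Lemma~\ref{lemma:bruhat_lpm} at each covering yields $\mathfrak D_k\geq_G\mathfrak D_{k+1}$, and composing gives $\mathfrak B'=\mathfrak D_0\geq_G\mathfrak D_m=\mathfrak B$, which is (i).

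For (ii) $\Rightarrow$ (iii), I would dually refine $\pi_{\mathfrak B}\leq_G \pi_{\mathfrak B'}$ into a sequence of Gale covers $\pi_{\mathfrak B}=\rho_0\prec_G\cdots\prec_G\rho_m=\pi_{\mathfrak B'}$; such a refinement always exists in a finite poset by iteratively picking a minimal element of the nonempty finite set $\{\rho : \rho_k<_G\rho\leq_G\pi_{\mathfrak B'}\}$. At each cover, Lemma~\ref{lem:coveringG} produces indices $i<j$ satisfying exactly the structural hypotheses of Lemma~\ref{lemma:lpm_bruhat}, and the latter lemma concludes $\tau_{k+1}\prec_B\tau_k$ for the associated Bruhat permutations. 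Stringing these coverings together gives $\tau_{\mathfrak B'}=\tau_m\leq_B\tau_0=\tau_{\mathfrak B}$, which is (iii).

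The main (and only mild) obstacle is the bookkeeping of three parallel encodings: the flag $\mathfrak B$, its Gale permutation $\pi_{\mathfrak B}$, and its Bruhat permutation $\tau_{\mathfrak B}$. One must be sure that each intermediate permutation in a saturated chain in one poset corresponds to a genuine flag of $\mathfrak U_n$, so that the three lemmas may be applied with their stated hypotheses. Because the maps $\mathfrak B\leftrightarrow \pi_{\mathfrak B}\leftrightarrow\tau_{\mathfrak B}$ are bijections on all of $S_n$, this lifting is automatic, and all the genuine combinatorial work has already been done in the three preceding lemmas.
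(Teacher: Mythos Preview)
Your proposal is correct and follows the same approach as the paper: (i)$\Leftrightarrow$(ii) is by definition, (iii)$\Rightarrow$(i) via Lemma~\ref{lemma:bruhat_lpm}, and (ii)$\Rightarrow$(iii) via Lemmas~\ref{lem:coveringG} and~\ref{lemma:lpm_bruhat}. The only difference is that you make explicit the refinement into saturated chains (and the bijectivity of $\mathfrak B\leftrightarrow\pi_{\mathfrak B}\leftrightarrow\tau_{\mathfrak B}$ needed to lift intermediate permutations back to flags), whereas the paper leaves this implicit in its one-line invocations of the lemmas.
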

\begin{proof}
 The equivalence of $(i)$ and $(ii)$ is just by definition. Lemma~\ref{lemma:bruhat_lpm} shows $(iii)\implies (i)$. Finally, $(ii)\implies (iii)$ follows by first applying Lemma~\ref{lem:coveringG} and then Lemma~\ref{lemma:lpm_bruhat}.
\end{proof}

In Figure \ref{fig:fullP3} we see that all but 2 intervals in the Bruhat order $S_3$ come from an LPFM. The ones that do not arise this way are $[132,231]$ and $[213,312]$. The former gives rise to $2\subset 23\subset 123\geq_G 2\subset 12\subset 123$ which is not an LPFM since $(3,1)$ is not a good pair for the matroid $U_{2,3}$. The reader can verify that the latter is neither an LPFM.

\begin{corollary}\label{cor:lpmint_bruhatint} 
Every lattice path flag matroid polytope is a Bruhat interval polytope.
\end{corollary}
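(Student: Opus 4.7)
The plan is to produce a minimum and maximum flag of bases of $\mathcal F$ in Gale order and invoke Theorem~\ref{thm:iso_posets} to translate the resulting Gale interval of vertex-indexing flags into a Bruhat interval in $S_n$.

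First I would set $\mathfrak B_U:=(U_0,U_1,\ldots,U_n)$ and $\mathfrak B_L:=(L_0,L_1,\ldots,L_n)$, where $M_i=M[U_i,L_i]$. Applying Lemma~\ref{lem:containment} to each covering quotient $M_i\leq_Q M_{i+1}$ yields $U_i\subseteq U_{i+1}$ and $L_i\subseteq L_{i+1}$; since consecutive ranks differ by one, both sequences are genuine flags of bases of $\mathcal F$. Because $M_i$ is the LPM $M[U_i,L_i]$, a flag $\mathfrak B=(B_0,\ldots,B_n)$ of bases of $\mathfrak U_n$ is a flag of bases of $\mathcal F$ if and only if $U_i\leq_G B_i\leq_G L_i$ for each $i$, equivalently if and only if $\mathfrak B_U\leq_G\mathfrak B\leq_G\mathfrak B_L$.

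Next, set $\tau_U:=\tau_{\mathfrak B_U}$ and $\tau_L:=\tau_{\mathfrak B_L}$. The order-reversing bijection $\mathfrak B\mapsto\tau_{\mathfrak B}$ from Theorem~\ref{thm:iso_posets} carries the Gale interval $[\mathfrak B_U,\mathfrak B_L]_G$ bijectively onto the Bruhat interval $[\tau_L,\tau_U]_B$. To finish, I would verify the coordinate match: the $j$-th coordinate of $e_{B_0}+\cdots+e_{B_n}$ counts the $B_i$ containing $j$, which equals $n-\pi_{\mathfrak B}^{-1}(j)+1=\tau_{\mathfrak B}(j)$; hence the vertex of $\Delta_{\mathcal F}$ coming from $\mathfrak B$ is the point $(\tau_{\mathfrak B}(1),\ldots,\tau_{\mathfrak B}(n))$. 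Combining these identifies the vertex set of $\Delta_{\mathcal F}$ with $\{(z(1),\ldots,z(n))\mid z\in[\tau_L,\tau_U]_B\}$, i.e., with $P_{\tau_L,\tau_U}$.

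Since the substance is already packaged in Theorem~\ref{thm:iso_posets} together with the Gale-interval characterization of bases of an LPM, there is no genuine obstacle in this argument. The only care required is in checking that $\mathfrak B_U$ and $\mathfrak B_L$ are legitimate flags of bases of $\mathcal F$ via Lemma~\ref{lem:containment}, and in tracking the coordinate convention that sends $\tau_{\mathfrak B}$ to the physical vertex of $\Delta_{\mathcal F}$.
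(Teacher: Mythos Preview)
Your argument is correct and follows essentially the same route as the paper: identify the set of flags of bases of $\mathcal F$ with the Gale interval $[\mathfrak B_U,\mathfrak B_L]_G$ and then invoke Theorem~\ref{thm:iso_posets} to convert this into a Bruhat interval. You are slightly more explicit than the paper in two places---you appeal to Lemma~\ref{lem:containment} to confirm that $\mathfrak B_U$ and $\mathfrak B_L$ are genuine nested flags, and you verify the coordinate identity $e_{B_0}+\cdots+e_{B_n}=(\tau_{\mathfrak B}(1),\ldots,\tau_{\mathfrak B}(n))$---whereas the paper leaves both points implicit and cites Observation~\ref{obs:weak} for the reverse inclusion; but the skeleton of the proof is the same.
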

\begin{proof}

Let $\mathcal F:(M_0, M_1, \ldots, M_n)$ be an LPFM, with $M_i=M[U_i,L_i]$ for all $0\leq i\leq n$. By Theorem~\ref{thm:iso_posets} we can argue directly in the order $\leq_G$ on the flags. We show that the set of flags of bases $\mathcal{F}$ coincides with the interval $[(U_0,\ldots,U_n),(L_0,\ldots,L_n)]_G$. The inclusion $``\subseteq''$, follows since by definition every flag $(B_0,B_1,\dots,B_n)$ of bases of $\mathcal{F}$ must be such that $U_i\leq_G B_i\leq_G L_i$, for all $i=0,1,\dots,n $.

To see the reverse inclusion $``\supseteq''$, let $\mathfrak{B}=(B_0,\ldots, B_n)\in[(U_0,\ldots,U_n),(L_0,\ldots,L_n)]_G$. Thus, $B_i\in [U_i,L_i]_G$ for all $0\leq i\leq n$. Now, by Observation~\ref{obs:weak} this simply means that $B_i$ is a base of $M_i=M[U_i,L_i]$. Hence, $\mathfrak{B}$ is a flag of bases of $\mathcal{F}$.
\end{proof}

In~\cite{BEW22} one of the results claims that it is possible to characterize when a flag matroid
polytope comes from a Bruhat interval, by just checking a condition on all the
2-dimensional faces. In light of Corollary \ref{cor:lpmint_bruhatint} it would be interesting what these faces should look like for a LPFM.
We can rephrase Corollary \ref{cor:lpmint_bruhatint} by saying that if $\mathcal F$ is an LPFM then its matroid polytope $\Delta_{\mathcal F}$ is such that its $1$-skeleton corresponds to an interval in $(S_n,\leq_B)$.
It is however not that easy to decide which intervals arise from LPFMs. 

The following Theorem establishes in terms of Gale permutations and Bruhat permutations, the condition for a sequence of LPMs to be a flag matroid. To this end we will make use of Definition \ref{def:good_pair} and translate it in terms of the aforementioned permutations. We will make use of the \emph{standardization map} $\text{st}_S:S\rightarrow[\ell]$ where $S$ is a $\ell$-subset of positive integers. The map $\text{st}_S$ is the unique bijection from $S$ to $[\ell]$ that preserves order. We also denote by $\pi([k])=\{\pi(1),\dots,\pi(k)\}$ whenever $\pi\in S_n$ and $1\leq k\leq n$.

\begin{theorem}\label{thm:quotients_asgoodpairs}
Let $\mathfrak B\leq_G\mathfrak B'$ be flags of $\mathfrak{U}_n$ and $\pi_{\mathfrak B}\leq_G\pi_{\mathfrak B'}$ and $\tau_{\mathfrak B}\geq_B\tau_{\mathfrak B'}$ the permutations associated as above. The following are equivalent:
\begin{itemize}
 \item[(i)]  the order-interval $[\mathfrak B,\mathfrak B']_G$ constitutes the set of flags of bases of an LPFM,
 \item[(ii)] for all $1\leq k\leq n$ the maps $\text{st}_{\pi_{\mathfrak{B}}([k])}:\pi_{\mathfrak{B}}([k])\rightarrow[k]$ and $\text{st}_{\pi_{\mathfrak{B}'}([k])}:\pi_{\mathfrak{B}'}([k])\rightarrow[k]$ are such that $\max\{0, \pi_ {\mathfrak B}(k)-\pi_{\mathfrak B'}(k)\}\leq \text{st}_{\pi_{\mathfrak{B}([k])}}(\pi_{\mathfrak{B}}(k))- \text{st}_{\pi_{\mathfrak{B}'}([k])}(\pi_{\mathfrak{B}'}(k))$,

 \item[(iii)] for every $1\leq k\leq n$ let $a_k=\tau_{\mathfrak B}^{-1}(n-k+1)$, $a'_k=\tau_{\mathfrak B'}^{-1}(n-k+1)$. Then $\max\{0, a_k-a_k'\}\leq \text{st}_{\{a_1,\ldots, a_k\}}(a_k)-\text{st}_{\{a'_1,\ldots, a'_k\}}(a'_k)$.
 \end{itemize}
\end{theorem}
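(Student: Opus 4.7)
The plan is to reduce all three conditions to Theorem~\ref{thm:lpm_quotients} applied step by step along the flag. Given $\mathfrak B\leq_G\mathfrak B'$, I would start by defining, for each $k\in\{0,\ldots,n\}$, the LPM $M_k:=M[\pi_{\mathfrak B}([k]),\pi_{\mathfrak B'}([k])]$; this is well defined because $\mathfrak B\leq_G\mathfrak B'$ forces $\pi_{\mathfrak B}([k])\leq_G\pi_{\mathfrak B'}([k])$. By Observation~\ref{obs:weak} and the proof of Corollary~\ref{cor:lpmint_bruhatint}, condition (i) is equivalent to $(M_0,\ldots,M_n)$ being an LPFM, and since the ranks increase by one at each step this reduces further to verifying $M_{k-1}\leq_Q M_k$ for every $1\leq k\leq n$. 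Note also that, once these quotient relations hold, Corollary~\ref{cor:lpmint_bruhatint} gives not just inclusion but equality between $[\mathfrak B,\mathfrak B']_G$ and the set of flags of bases of the LPFM, which is the precise statement in (i).

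Next I would prove (i)$\Leftrightarrow$(ii). By construction $U_k\setminus U_{k-1}=\{\pi_{\mathfrak B}(k)\}$ and $L_k\setminus L_{k-1}=\{\pi_{\mathfrak B'}(k)\}$, so Theorem~\ref{thm:lpm_quotients} reduces the relation $M_{k-1}\leq_Q M_k$ to the statement that the single greedy pair $(\pi_{\mathfrak B'}(k),\pi_{\mathfrak B}(k))$ is good in $M_k$. The index $i$ from Definition~\ref{def:good_pair}, i.e.\ the position of $\pi_{\mathfrak B'}(k)$ among the sorted elements of $L_k=\pi_{\mathfrak B'}([k])$, is exactly $\text{st}_{\pi_{\mathfrak B'}([k])}(\pi_{\mathfrak B'}(k))$, and likewise $j=\text{st}_{\pi_{\mathfrak B}([k])}(\pi_{\mathfrak B}(k))$. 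The reformulation $\max\{0,u_j-\ell_i\}\leq j-i$ of good pair noted just after Definition~\ref{def:good_pair} then translates verbatim into the inequality in (ii), across all $k$.

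Finally, (ii)$\Leftrightarrow$(iii) is a bookkeeping check using the definitions of the Gale and Bruhat permutations. From $\tau_{\mathfrak B}(i)=\overline{\pi}_{\mathfrak B}^{-1}(i)$ and $\overline{\pi}_{\mathfrak B}(i)=\pi_{\mathfrak B}(n-i+1)$ one obtains $\tau_{\mathfrak B}^{-1}(j)=\pi_{\mathfrak B}(n-j+1)$, so
\[
a_k=\tau_{\mathfrak B}^{-1}(n-k+1)=\pi_{\mathfrak B}(k),\qquad a'_k=\tau_{\mathfrak B'}^{-1}(n-k+1)=\pi_{\mathfrak B'}(k).
\]
Consequently $\{a_1,\ldots,a_k\}=\pi_{\mathfrak B}([k])$ and $\{a'_1,\ldots,a'_k\}=\pi_{\mathfrak B'}([k])$, and the inequalities in (ii) and (iii) are literally identical once written in these terms.

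Honestly, the hard work has already been done in Theorem~\ref{thm:lpm_quotients} and in the Gale/Bruhat dictionary of Theorem~\ref{thm:iso_posets}; the main thing to be careful about is matching the indices $i,j$ of Definition~\ref{def:good_pair} with the standardization maps (rather than with the raw values $\pi_{\mathfrak B}(k),\pi_{\mathfrak B'}(k)$), and ensuring that the single-step argument really does characterize an entire LPFM, which is precisely what the "greedy pairing" half of Theorem~\ref{thm:lpm_quotients} delivers when applied successively. No novel combinatorial input beyond this translation is needed.
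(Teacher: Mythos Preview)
Your proposal is correct and follows essentially the same approach as the paper: define $M_k=M[B_k,B'_k]$ (equivalently $M[\pi_{\mathfrak B}([k]),\pi_{\mathfrak B'}([k])]$), reduce (i) to the chain of single-step quotients $M_{k-1}\leq_Q M_k$, translate each via Theorem~\ref{thm:lpm_quotients} into the good-pair condition for $(\pi_{\mathfrak B'}(k),\pi_{\mathfrak B}(k))$ expressed through standardization maps, and then identify (ii) with (iii) via $a_k=\tau_{\mathfrak B}^{-1}(n-k+1)=\pi_{\mathfrak B}(k)$. Your write-up is in fact a bit more careful than the paper's in making explicit why $[\mathfrak B,\mathfrak B']_G$ equals (and not merely contains) the set of flags of bases, but the argument is the same.
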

\begin{proof} 
 ``(i)$\iff$ (ii)'': 
 This equivalence boils down to translating Definition~\ref{def:good_pair} in terms of the Gale permutations $\pi_{\mathfrak B'}$ and  $\pi_{\mathfrak B'}$. Let $\mathfrak B=(B_0,\ldots, B_n)$ and $\mathfrak B'=(B'_0,\ldots, B'_n)$. Let 
 $M_k:=M[B_k,B'_k]$ for $1\leq k\leq n$. Then by Theorem~\ref{thm:lpm_quotients} we have that
$M_{k-1}\leq_QM_k$ if and only if $(\pi_{\mathfrak B'}(k),\pi_{\mathfrak B}(k))$ is a good pair of $M_k$. Now, the map $\text{st}_{\pi_{\mathfrak{B}}([k])}$ tells us the ordering of the elements in the set $\pi_{\mathfrak{B}}([k])$, and similarly $\text{st}_{\pi_{\mathfrak{B'}}([k])}$. Thus using Definition~\ref{def:good_pair} we have that $(\pi_{\mathfrak B'}(k),\pi_{\mathfrak B}(k))$ is a good pair of $M_k$
 if and only if $0\leq\text{st}_{\pi_{\mathfrak{B}([k])}}(\pi_{\mathfrak{B}}(k))- \text{st}_{\pi_{\mathfrak{B}'}([k])}(\pi_{\mathfrak{B}'}(k))$ and $\pi_ {\mathfrak B}(k)-\pi_{\mathfrak B'}(k)\leq \text{st}_{\pi_{\mathfrak{B},k}}(\pi_{\mathfrak{B}}(k))- \text{st}_{\pi_{\mathfrak{B}'}([k])}(\pi_{\mathfrak{B}'}(k))$ which in turn is equivalent to $\max\{0, \pi_ {\mathfrak B}(k)-\pi_{\mathfrak B'}(k)\}\leq \text{st}_{\pi_{\mathfrak{B},k}}(\pi_{\mathfrak{B}}(k))- \text{st}_{\pi_{\mathfrak{B}'}([k])}(\pi_{\mathfrak{B}'}(k))$.


 \noindent ``(ii)$\iff$ (iii)'': 
 For this equivalence we recall that $\pi_B(k)=\tau_{\mathcal B}^{-1}(n-k+1)=a_k $, and similarly for $\mathcal B'$, for $1\leq k\leq n$. Thus $\max\{0, \pi_ {\mathfrak B}(k)-\pi_{\mathfrak B'}(k)\}\leq \text{st}_{\pi_{\mathfrak{B}([k])}}(\pi_{\mathfrak{B}}(k))- \text{st}_{\pi_{\mathfrak{B}'}([k])}(\pi_{\mathfrak{B}'}(k))$ if and only if $\max\{0, a_k-a_k'\}\leq \text{st}_{\{a_1,\ldots, a_k\}}(a_k)-\text{st}_{\{a'_1,\ldots, a'_k\}}(a'_k)$. The result follows.
\end{proof}

\begin{example} We illustrate Theorem \ref{thm:quotients_asgoodpairs} with two flags  $\mathfrak B$ and $\mathfrak B'$ in $\mathfrak{U}_4$ whose Gale permutations are, respectively, $\pi_{\mathfrak B}=2413$, $\pi_{\mathfrak B'}=4321$. Thus, the Bruhat permutations are, respectively, $\tau_{\mathfrak B}=2413$ and $\tau_{\mathfrak B'}=1234$. Notice that $\pi_{\mathfrak B}\leq_G\pi_{\mathfrak B'}$ and $\tau_{\mathfrak B}\geq_B\tau_{\mathfrak B'}$. Following the notation in the proof of the theorem, setting $k=3$ we summarize as follows the calculations needed to verify the condition to be a good pair. 
However notice that in order to verify $\mathfrak B\leq_G\mathfrak B'$ one needs to do the corresponding calculations for every $k\in[n]$.

\begin{table}[h]
{\footnotesize
\begin{tabular}{c|c|c|c|c|c|c}

 $\pi_{\mathfrak B}([3])$ &  $\pi_{\mathfrak B}(3)$ & $\text{st}_{\pi_{\mathfrak{B}}([3])}(\pi_{\mathfrak{B}}(3))$ &   $\pi_{\mathfrak B'}([3])$ &  $\pi_{\mathfrak B'}(3)$ & $\text{st}_{\pi_{\mathfrak{B'}}([3])}(\pi_{\mathfrak{B}}(3))$ & $u_j-\ell_i\leq j-i$ \\
 \hline
 $\{1,2,4\}$ & 1 & 1 &  $\{2,3,4\}$ & 2 & 1 & $1-2\leq 1-1$\\
 \hline
 \hline
  $\tau^{-1}_{\mathfrak B}(\{4,3,2\})$ &  $\tau^{-1}_{\mathfrak B}(2)$ & $\text{st}_{\tau^{-1}_{\mathfrak{B}}([2])}(\tau^{-1}_{\mathfrak{B}}(3))$ &   $\tau^{-1}_{\mathfrak B'}([\{4,3,2\})$ &  $\tau^{-1}_{\mathfrak B'}(2)$ & $\text{st}_{\tau^{-1}_{\mathfrak{B'}}([3])}(\tau^{-1}_{\mathfrak{B}}(2))$ & $u_j-\ell_i\leq j-i$ \\
 \hline
 $\{1,2,4\}$ & 1 & 1 &  $\{2,3,4\}$ & 2 & 1 & $1-2\leq 1-1$
\end{tabular}}
\end{table}

\end{example}

Our next result establishes that some particular intervals in the Bruhat order come from LPFMs. 

\begin{proposition}\label{thm:cubes_are_lpm}
Let $s_i=(i,i+1)$ be a simple transposition in $S_n$. Let $\tau,\tau'$ be permutations of $S_n$ such that $\tau\leq_B\tau'$  where $\tau'=\tau s_{i_1}\cdots s_{i_m}$ for some $i_1,\dots,i_m\in[n-1]$. If the $s_{i_j}$ commute pairwise then $[\mathfrak B', \mathfrak B]_G$ constitute the set of flags of bases of an LPFM on $[n]$.
\end{proposition}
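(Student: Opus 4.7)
My plan is, by Theorem~\ref{thm:iso_posets}, to identify the Gale interval $[\mathfrak B',\mathfrak B]_G$ with the Bruhat interval $[\tau,\tau']_B$ and to verify, for every $k\in[n]$, the Bruhat-side inequality from Theorem~\ref{thm:quotients_asgoodpairs}(iii),
\[
\max\{0,\,a_k-a_k'\}\ \leq\ \mathrm{st}_{\{a_1,\ldots,a_k\}}(a_k)-\mathrm{st}_{\{a_1',\ldots,a_k'\}}(a_k'),
\]
where, under the matching $\tau_{\mathfrak B'}=\tau'$ (Bruhat larger, Gale smaller) and $\tau_{\mathfrak B}=\tau$ (Bruhat smaller, Gale larger), one has $a_k=(\tau')^{-1}(n-k+1)$ and $a_k'=\tau^{-1}(n-k+1)$, the positions of the $k$-th largest value in $\tau'$ and $\tau$ respectively.

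The commuting hypothesis tells us that the adjacent pairs $P_r=\{i_r,i_r+1\}$ are pairwise disjoint, so $\tau'$ differs from $\tau$ only by independently swapping the two entries in each $P_r$, and $\tau\leq_B\tau'$ forces $\alpha_r:=\tau(i_r)<\beta_r:=\tau(i_r+1)$. For each $k$ the value $n-k+1$ falls into one of three regimes: (a) $n-k+1\notin\bigcup_r\{\alpha_r,\beta_r\}$, giving $a_k=a_k'$; (b) $n-k+1=\alpha_r$ for some $r$, giving $a_k=i_r+1$ and $a_k'=i_r$; or (c) $n-k+1=\beta_r$ for some $r$, giving $a_k=i_r$ and $a_k'=i_r+1$. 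In parallel, the sets $S:=\{a_1,\ldots,a_k\}$ and $S':=\{a_1',\ldots,a_k'\}$ differ precisely by the replacements $i_{r'}\in S\leftrightarrow i_{r'}+1\in S'$ for those indices $r'$ with $\alpha_{r'}<n-k+1\leq\beta_{r'}$, i.e.\ the pairs for which only $\beta_{r'}$ lies in the top-$k$ window of values.

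The heart of the argument is then a cancellation enabled by the pairwise disjointness of the $P_{r'}$: since no integer lies strictly between $i_{r'}$ and $i_{r'}+1$, each individual replacement $i_{r'}\leftrightarrow i_{r'}+1$ contributes equally (either $+1$ or $0$) to both standardizations whenever the probe elements $a_k,a_k'$ lie outside $P_{r'}$, so these contributions cancel in the right-hand side. What remains is the effect of the ``own'' pair $P_r$ in cases (b) and (c); a direct count shows the inequality reduces to $0\leq 0$ in case (a), to $1\leq 1$ in case (b) (where both $i_r$ and $i_r+1$ lie in each of $S$ and $S'$, producing the extra $+1$ needed to match $a_k-a_k'=1$), and to $0\leq 0$ in case (c). The only genuinely delicate step will be the bookkeeping in case (b); the pairwise disjointness of the $P_r$ is exactly what makes every off-diagonal contribution cancel, so no case ever produces an obstruction to the inequality.
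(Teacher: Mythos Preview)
Your proposal is correct and follows essentially the same route as the paper: verify condition~(iii) of Theorem~\ref{thm:quotients_asgoodpairs} by computing $a_k-a_k'$ in the three cases (value outside all swapped pairs, value equals $\alpha_r$, value equals $\beta_r$) and checking that the standardization difference matches, using that the disjointness of the $P_{r'}$ makes each extraneous swap $i_{r'}{+}1\to i_{r'}$ invisible to the rank of the probe element. The only cosmetic difference is a labeling flip: because the proposition's interval is $[\mathfrak B',\mathfrak B]_G$ while Theorem~\ref{thm:quotients_asgoodpairs} is stated for $[\mathfrak B,\mathfrak B']_G$, your $(a_k,a_k')$ is the paper's $(a_k',a_k)$, but both end up checking the same inequality and arriving at the same table of values $(0,0)$, $(1,1)$, $(-1,0)$.
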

\begin{proof}
Let $I_1=\{i_1,...,i_m\}$ and $I_2=\{i_1+1,...,i_m+1\}$. Then 
$$\tau'(i)=\begin{cases}
    \tau(i) & i\in[n]\setminus(I_1\sqcup I_2)\\
    \tau(i+1) & i\in I_1\\
    \tau(i-1) & i\in I_2\\
\end{cases}\Rightarrow\
a'_k-a_k=\begin{cases}
   0  & \tau^{-1}(n-k+1) \in[n]\setminus(I_1\sqcup I_2)\\
   1 &\tau^{-1}(n-k+1)  \in I_1\\
   -1 & \tau^{-1}(n-k+1) \in I_2\\
\end{cases}.$$
On the other hand, since $\{a_1,\dots,a_k\}=\{\tau^{-1}(n),\dots,\tau^{-1}(n-k+1)\}$ then $\text{st}_{\{a_1,\dots,a_k\}}(a_k)=\text{st}_{\{a'_1,\dots,a'_k\}}(a'_k)$ if $\{a_1,\dots,a_k\}\subseteq[n]\setminus(I_1\sqcup I_2)$, or, $i_r\in\{a_1,\dots,a_k\}$ implies $i_r+1\notin\{a_1,\dots,a_k\}$. That is, the relative position of $i_r$ in $\{a_1,\dots,a_k\}$ is the same as that of $i_{r}+1$ in $\{a'_1,\dots,a'_k\}$ as long as $i_r+1$ has not been added yet to $\{a_1,\dots,a_k\}$ (and thus $i_r$ has not been added yet to $\{a'_1,\dots,a'_k\}$).
Otherwise, $\text{st}_{\{a'_1,\ldots, a'_k\}}(a'_k)-\text{st}_{\{a_1,\ldots, a_k\}}(a_k)=1$. Summarizing we have

\begin{table}[h]
\begin{tabular}{|c|c|}
\hline
$a_k'-a_k $&$ \text{st}_{\{a'_1,\ldots, a'_k\}}(a'_k)-\text{st}_{\{a_1,\ldots, a_k\}}(a_k)$ \\
\hline
$0$&$ 0$ \\
\hline
$-1$&$ 0$ \\
\hline
$1$&$ 1 $\\
\hline
\end{tabular}
\caption{\label{tab:prop_cubes}Proof of Proposition \ref{thm:cubes_are_lpm}.}
\end{table}
The result then follows from Theorem~\ref{thm:quotients_asgoodpairs}.
\end{proof}

We close this section proving that LPM quotients are realizable.
In \cite{MOX19} the authors consider \emph{realizable quotients}, which we denote with $\trianglelefteq_Q$. Namely, if $M'$ and $M$ are positroids over $[n]$ of ranks $k<\ell$, respectively, then $M'\trianglelefteq_Q M$ if there exists a point $A\in Gr_{\ell,n}^{\geq 0}$ such that $A$ represents $M$ and the submatrix  $A'$ obtained from $A$ by keeping its top $k$ rows is such that $A'$ represents $M'$ and $A'\in Gr_{k,n}^{\geq 0}$. 

\begin{corollary}\label{cor:realizable}
    LPM quotients are realizable. That is, if $M'$ and $M$ are LPMs on $[n]$ and $M'\leq_Q M$ then $M'\trianglelefteq_Q M$.
\end{corollary}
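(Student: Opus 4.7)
The plan is to leverage the main result of Section~\ref{sec:Bruhat}, namely that every LPFM is represented by a point of the nonnegative flag variety. The first step is to extend the given quotient $M'\leq_Q M$ to a full LPFM $\mathcal F=(M_0,M_1,\ldots,M_n)$ with $M_{r(M')}=M'$ and $M_{r(M)}=M$. Since $\mathcal P_n$ is graded with minimum $U_{0,n}$ and maximum $U_{n,n}$, the interval $[M',M]_Q$ completes to a chain of consecutive ranks inside $\mathcal P_n$, and it remains to prolong this chain downward from $M'$ to $U_{0,n}$ and upward from $M$ to $U_{n,n}$. Downward prolongation follows from Proposition~\ref{prop:goodpair}: the pair $(\ell_1,u_1)$ is always good since $u_1\leq\ell_1$, so a good pair can always be removed whenever the rank is positive. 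Upward prolongation follows symmetrically from LPM duality, using that $N\leq_Q M$ if and only if $M^*\leq_Q N^*$ and that the dual of an LPM is an LPM.

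With $\mathcal F$ in hand, Corollary~\ref{cor:lpmint_bruhatint} identifies the polytope $\Delta_{\mathcal F}$ with a Bruhat interval polytope $P_{v,w}$. The cited Theorem~6.10 of~\cite{KW15} then provides a point $g\in\mathcal F\ell_{v,w}^{>0}\subseteq\mathcal F\ell_n^{\geq 0}$ whose moment-map image is $P_{v,w}$. Fix any full-rank $n\times n$ matrix $A$ whose successive row spans realize $g$ as a flag. By the defining property of $\mathcal F\ell_n^{\geq 0}$, for every $j\in[n]$ the submatrix $A_j$ consisting of the top $j$ rows of $A$ lies in $\Gr_{j,n}^{\geq 0}$ and represents the constituent $M_j$ of $\mathcal F$. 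Specializing to $j=r(M)$ and $j=r(M')$ yields $A_{r(M)}\in\Gr_{r(M),n}^{\geq 0}$ representing $M$ and $A_{r(M')}\in\Gr_{r(M'),n}^{\geq 0}$ representing $M'$, with $A_{r(M')}$ obtained from $A_{r(M)}$ by keeping only its top $r(M')$ rows. This is precisely the realizability condition $M'\trianglelefteq_Q M$ as defined in~\cite{MOX19}.

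The main difficulty is really encapsulated in the first step, namely the claim that every consecutive LPM quotient cover in $\mathcal P_n$ arises by removing a single good pair (Theorem~\ref{thm:lpm_quotients}); without this combinatorial description the intermediate matroids in the chain might fail to be LPMs, and our appeal to the LPFM–flag variety correspondence would break down. Once this is in place, the rest of the argument is a direct dictionary translation between the combinatorial objects (LPFMs) and the geometric ones (points of $\mathcal F\ell_n^{\geq 0}$).
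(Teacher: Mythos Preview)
Your proof is correct and follows essentially the same route as the paper's: extend the quotient pair to a full LPFM, invoke Corollary~\ref{cor:lpmint_bruhatint} to identify the flag with a Bruhat interval, and then cite~\cite{KW15,TW15} to realize it as a point of $\mathcal F\ell_n^{\geq 0}$. Your argument is somewhat more explicit than the paper's in two places---the extension to a full chain (where the paper simply appeals to Theorem~\ref{thm:lpm_quotients} and the gradedness of $\mathcal P_n$, while you spell out the downward step via the always-good pair $(\ell_1,u_1)$ and the upward step via duality) and the translation from a point of $\mathcal F\ell_n^{\geq 0}$ back to a matrix witnessing $M'\trianglelefteq_Q M$---but these are elaborations, not a different strategy.
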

\begin{proof}
If $M'$ and $M$ are LPMs on $[n]$ and $M'\leq_Q M$, then by Theorem~\ref{thm:lpm_quotients} there is an LPFM $\mathcal{F}:(M_0\leq_Q\ldots\leq_Q M_n)$ with $M'=M_i$ and $M=M_j$ for some $0\leq i<j\leq n$. By Corollary~\ref{cor:lpmint_bruhatint}, $\mathcal{F}$ corresponds to an interval of the (strong) Bruhat order $S_n$. Now, by~\cite[Proposition 2.7]{TW15} or~\cite[Theorem 6.10]{KW15}, $\mathcal{F}$ can be thought of as a point of $\mathcal F\ell_n^{\geq 0}$. In particular, $M'\trianglelefteq_Q M$.
\end{proof}

We have shown that given two LPMs $M'\leq_Q M$ there exists a representable flag matroid that has them as constituents. This is not true for realizable matroids in general, see~\cite[1.7.5 Example 7]{BGW}. Our results moreover show that there exits a point $A\in \mathcal F\ell_n^{\geq 0}$ that realizes simultaneously $M$ and $M'$. This is not true for positroids in general, as pointed out in Example~\ref{xmpl:postroids} and not even if they are quotients in the more restrictive setting of oriented matroids, see~\cite[Example 4.6]{BEW22}.

\section{On a conjecture of Mcalmon, Oh, and Xiang}

In this section we will prove a conjecture made by Mcalmon, Oh, and Xiang~\cite{MOX19,OX22} which aims to characterize quotients of positroids (with no loops or coloops) combinatorially in the special case of LPMs. As we already know, LPMs are a subfamily of positroids and thus, our purpose now is to state and prove this conjecture for LPMs using the results we have developed already. Recall that if $A\subseteq [n]$ then $\overline A$ denotes the set $[n]\setminus A$. 

\begin{definition}\label{def:c_r_intervals_lpm}
Let $M=M[U,L]$ be an LPM over $[n]$ where $U=\{ u_1<\cdots<u_k\}$ and $L=\{ \ell_1<\cdots<\ell_k\}$. Let $\overline{L}=\{\overline{\ell}_1< \cdots <\overline{\ell}_{n-k}\}$ and $\overline{U}=\{\overline{u}_1< \cdots <\overline{u}_{n-k}\}$ and assume that $M$ has no loops nor coloops.
\begin{itemize}
\item[(RI)] A \emph{row-interval} of $M$ is a cyclic interval of the form $\{\ell_i,\ell_i+1,\dots,n,1,\dots,u_i \}$, for every $i\in\{1,\dots,k \}$. We denote such an interval by $[\ell_i,u_i]$.
\item[(CI)] A \emph{column-interval} of $M$ is an interval of the form $\{\overline{\ell_i},\overline{\ell_i}+1,\dots,\overline{u_i} \}$, for every $i\in\{1,\dots,n-k \}$. We denote such an interval by $[\overline{\ell_i},\overline{u_i}]$. 
\end{itemize}
An \emph{interval} of $M$ is either a row or a column interval of $M$.
\end{definition}

There is a bijective correspondence between positroids on $[n]$ and \emph{decorated permutations} on $[n]$  (see \cite{P06}), i.e, bijections from $[n]$ to $[n]$, where fixed points are additionally \emph{decorated} with an underline $\pi(a)=\underline a$ or not.
Let now $M$ be an LPM as in Definition \ref{def:c_r_intervals_lpm}. The {decorated permutation $\pi_M$}, or simply $\pi$, associated to $M$ is the permutation on the set $[n]$ given by 
$$\begin{cases}
\pi(u_i)=\ell_i & \text{ for }i\in\{1,\dots,k \}\\
\pi(\overline{u_i})=\overline{\ell_i} & \text{ for }i\in\{1,\dots,n-k \}.
\end{cases}$$
If $a\in[n]$ is a loop of $M$, then $\pi(a)=\underline a$. If $a\in[n]$ is a coloop of $M$ then $\pi(a)=a$. That is, loops and coloops are the only fixed points of $\pi$ and they are either decorated with an underline or not decorated, respectively. However since we are considering $M$ to be loop-free and coloop-free, then no fixed points will arise in the corresponding permutation $\pi$.

We illustrate these concepts with an example. For our purposes the definition we are providing here for such permutations, has been adapted to LPMs. Also, sometimes in the literature the definition given for decorated permutation would differ from ours by taking the inverse  $\pi^{-1}$, of the one we provided here.

As an example, consider the LPM given by $M'=[13,25]$ over the set $[5]$. Then its decorated permutation in one-line notation is $\pi=21534$. Also, the row-intervals of $M'$ are $[2,1]=\{2,3,4,5,1 \}$ and $[5,3]=\{5,1,2,3 \}$. On the other hand, its column-intervals are $[1,2]=\{ 1,2\}$, $[3,4]= \{ 3,4\}$ and $[4,5]=\{ 4,5\}$. In general, given any $M'$ as in Definition \ref{def:c_r_intervals_lpm}, it follows that each of its row-intervals contains the set $\{1,n\}$. On the other hand, the only column-interval that contains $n$ is the right-most column-interval $[\overline{\ell_{n-k}},n]$ and the only interval that contains $1$, is the left-most column-interval $[1,\overline{u_1}]$. Thus, if a column-interval of $M$ is expressed as a union containing a row-interval of $M'$, then it has to be simultaneously the left-most and right-most column-interval of $M$. Hence $M=U_{n-1,n}$.
This discussion leads us to the following observation which will be used throughout in the proof of Theorem~\ref{thm:suho}.

\begin{observation}\label{obs:columnsbecolumns}
 Let $M',M$ be LPMs on $[n]$ that are loop and coloop free. Also assume that $M\neq U_{n-1,n}$. If a column-interval of $M$ is expressed as union of intervals of $M'$, then these are all column-intervals.\end{observation}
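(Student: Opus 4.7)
The approach is a proof by contradiction exploiting the structural asymmetry between row-intervals (which are cyclic and wrap around $[n]$) and column-intervals (which are ordinary non-cyclic intervals). Suppose the column-interval $[\overline{\ell_j},\overline{u_j}]$ of $M$ is expressed as a disjoint union of intervals of $M'$, and suppose for contradiction that at least one of them, say $I$, is a row-interval $[\ell'_s,u'_s]$ of $M'$. Since $M'$ is coloop-free, the Gale inequality $u'_s\le\ell'_s$ is strict (otherwise $u'_s=\ell'_s$ forces this common value to lie in every basis of $M'$, a coloop). Hence $I=\{\ell'_s,\ell'_s+1,\ldots,n,1,\ldots,u'_s\}$ genuinely wraps around and, in particular, contains both $1$ and $n$.

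Next I would use $I\subseteq[\overline{\ell_j},\overline{u_j}]$ together with the fact that the latter is an ordinary interval of $[n]$: containing both $1$ and $n$ forces $\overline{\ell_j}=1$ and $\overline{u_j}=n$. Since $\overline{\ell_j}$ is the $j$-th smallest element of $\overline{L}$ and equals $1$, we must have $j=1$. Similarly $\overline{u_1}=n$ is the smallest element of $\overline{U}$, which forces $\overline{U}=\{n\}$ and hence $\overline{L}=\{1\}$ (as they have the same cardinality). Consequently $U=\{1,\ldots,n-1\}$ and $L=\{2,\ldots,n\}$, and the Gale bounds $b_i\le i+1$, $b_i\ge i$ show that the bases of $M$ are exactly the $(n-1)$-subsets of $[n]$. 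Thus $M=U_{n-1,n}$, contradicting the hypothesis $M\ne U_{n-1,n}$. Therefore no row-interval of $M'$ can occur in the decomposition, and all constituent intervals are column-intervals of $M'$.

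The main obstacle is really just making the wrapping argument airtight: one has to justify that the row-interval $[\ell'_s,u'_s]$ is a \emph{strictly} wrapping cyclic interval, and this is exactly what the coloop-free assumption on $M'$ buys us. The hypothesis $M\neq U_{n-1,n}$ is then used only at the very end to extract the contradiction, which explains why this single matroid must be excluded in the statement.
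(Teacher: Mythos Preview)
Your argument is correct and is essentially the same as the paper's: both hinge on the fact that a row-interval of a coloop-free LPM genuinely wraps around and hence contains $\{1,n\}$, forcing any column-interval of $M$ that contains it to have left endpoint $1$ and right endpoint $n$, which in turn pins down $M=U_{n-1,n}$. The paper phrases the last step as ``the column-interval must be simultaneously the leftmost and the rightmost,'' while you extract $j=1$ and $\overline{U}=\{n\}$ directly from the indexing, but these are the same deduction.
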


It is worth pointing out that Observation \ref{obs:columnsbecolumns} does not hold in general for row-intervals. For instance consider $M=[123,245]$ and $M'=[13,25]$. Then the row-interval $[4,2]=\{1,2,4,5\}$ of $M$ can only be represented as union of column-intervals $[1,2]\cup [4,5]$. Now, in \cite{MOX19}, what the authors call \emph{CCW}-arrows of an arbitrary positroid, correspond in the case of an LPM to its intervals as given in Definition \ref{def:c_r_intervals_lpm}. We can now state the following.

\begin{conjecture}[Mcalmon, Oh, Xiang '19]\label{conj:MOX}
 For positroids $M',M$ we have $M'\trianglelefteq_Q M$ if and only if every CCW-arrow of $M$ is the union of  CCW-arrows of $M'$. 
\end{conjecture}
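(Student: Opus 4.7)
The final statement is a conjecture for arbitrary positroids; I will address the case that both $M'$ and $M$ are loop- and coloop-free lattice path matroids, which is the setting in which the tools developed in this paper apply. In the LPM case, CCW-arrows specialize to the row- and column-intervals of Definition \ref{def:c_r_intervals_lpm}, and Corollary \ref{cor:realizable} tells us that $M' \trianglelefteq_Q M$ is equivalent to $M' \leq_Q M$. The plan is thus to prove that for such LPMs, $M' \leq_Q M$ holds if and only if every interval of $M$ decomposes as a union of intervals of $M'$.

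For the forward direction, assume $M' \leq_Q M$. By Theorem \ref{thm:lpm_quotients}, passing from $M$ to $M'$ corresponds to a sequence of rank-one good-pair moves, so it suffices to verify the decomposition claim for a single such move $M' = M[U\setminus\{u_j\},L\setminus\{\ell_i\}]$. A diagrammatic analysis (in the spirit of Figure \ref{fig:Bp}) shows that removing one North step from each of $U$ and $L$ has a very controlled effect on the intervals of Definition \ref{def:c_r_intervals_lpm}: the column-interval of $M$ corresponding to the modified region splits into two column-intervals of $M'$, every other column-interval is preserved, and each row-interval of $M'$ is either identical to a row-interval of $M$ or is the concatenation of two adjacent row-intervals of $M$ collapsed together. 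The good-pair inequality $\max\{0,u_j - \ell_i\}\leq j-i$ is exactly what is needed to ensure that the new interval endpoints fall into the correct cyclic positions.

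For the reverse direction, assume every interval of $M$ is a union of intervals of $M'$. By Observation \ref{obs:columnsbecolumns}, column-intervals of $M$ decompose into column-intervals of $M'$; the dual statement applied to $M^*$ and $M'^*$ (using $M^* = M[\overline{L},\overline{U}]$) yields the analogous conclusion for row-intervals. Reading the endpoints of these decompositions gives $U' \subseteq U$ and $L' \subseteq L$. I would then induct on $|U\setminus U'|$: the decomposition condition forces the innermost pair $(\ell_{j_r},u_{i_r})$ of the greedy pairing of $(L\setminus L', U\setminus U')$ to satisfy both $j_r \leq i_r$ and the length bound of Definition \ref{def:good_pair}, since otherwise the column-interval of $M$ straddling $\overline{u_{i_r}}$ and $\overline{\ell_{j_r}}$ could not decompose as a union of column-intervals of $M'$. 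Lemma \ref{lem:removing} then shows that the hypothesis persists on $M[U\setminus\{u_{i_r}\},L\setminus\{\ell_{j_r}\}]$, completing the induction; Theorem \ref{thm:lpm_quotients} yields $M' \leq_Q M$, which upgrades to $M' \trianglelefteq_Q M$ by Corollary \ref{cor:realizable}.

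The main obstacle is the reverse direction, where one must squeeze the full good-pair inequality out of a purely set-theoretic decomposition statement; the delicate bookkeeping concerns cyclic intervals that wrap around position $n$ and the interaction between row-intervals of $M$ and column-intervals of $M'$. Pushing the argument beyond LPMs to arbitrary positroids appears substantially harder: one loses both the diagrammatic handle on quotients supplied by Theorem \ref{thm:lpm_quotients} and the automatic realizability of quotients furnished by Corollary \ref{cor:realizable}, so both directions would need to be argued directly from the decorated-permutation or Grassmann-necklace description of CCW-arrows together with a genuine realizability argument inside $\mathcal{F}\ell_n^{\geq 0}$, which lies outside the reach of the methods developed here.
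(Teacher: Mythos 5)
Your scoping is right: the paper proves the conjecture only for LPMs (Theorem~\ref{thm:suho}), CCW-arrows specialize to the row/column-intervals of Definition~\ref{def:c_r_intervals_lpm}, and Corollary~\ref{cor:realizable} collapses $\trianglelefteq_Q$ to $\leq_Q$. Your forward direction (reduce to a single good-pair move and track how one North step removal on each of $U,L$ affects the intervals) is essentially what the paper does. The problem is the reverse direction.

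First, the appeal to duality is wrong. You claim that applying Observation~\ref{obs:columnsbecolumns} to $M^*,M'^*$ shows that row-intervals of $M$ must decompose into row-intervals of $M'$; the paper explicitly remarks, immediately after Observation~\ref{obs:columnsbecolumns}, that this is false, giving $M=M[123,245]$, $M'=M[13,25]$ where the row-interval $[4,2]=\{1,2,4,5\}$ of $M$ decomposes only as the union of \emph{column}-intervals $[1,2]\cup[4,5]$ of $M'$. (This detour is not needed anyway: the paper extracts both $U'\subseteq U$ and $L'\subseteq L$ from the column-interval observation alone, by reading off the two endpoints.)

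Second, and more seriously, your claim that a bad greedy pair forces some \emph{column}-interval of $M$ not to decompose is false. The paper's contradiction argument has two genuinely distinct cases, and the one where the index condition $i\leq j$ of Definition~\ref{def:good_pair} fails (paper's Case~1) is established by a \emph{row}-interval of $M$, not a column-interval. Concretely, take $M=M[1246,3568]$ and $M'=M[146,358]$ on $[8]$, both loop- and coloop-free. Here $U\setminus U'=\{2\}$, $L\setminus L'=\{6\}$, and the single greedy pair is $(\ell_3,u_2)=(6,2)$, which is bad since $3>2$, so $M'\not\leq_Q M$. The column-intervals of $M$ are $[1,3],[2,5],[4,7],[7,8]$ and those of $M'$ are $[1,2],[2,3],[4,5],[6,7],[7,8]$, and one checks that \emph{every} column-interval of $M$ is a union of column-intervals of $M'$. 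The violation of condition (iii) is visible only on the row-interval $[\ell_2,u_2]=[5,2]=\{5,6,7,8,1,2\}$ of $M$: the element $5$ belongs to no interval of $M'$ contained in this set, so it cannot be covered. Your argument, which only inspects column-intervals, would miss this entirely and wrongly conclude $M'\leq_Q M$.

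Finally, the claim that Lemma~\ref{lem:removing} makes hypothesis~(iii) persist down the induction is not what that lemma says: Lemma~\ref{lem:removing} is about good pairs staying good after removing a comparable good pair, not about interval decompositions persisting. The paper avoids all of this by arguing (iii)$\Rightarrow$(i) directly by contradiction, treating the first bad pair of the greedy pairing via a row-interval in one case and a column-interval in the other.
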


Now we are ready to prove and strengthen this conjecture for LPMs.


\begin{theorem}\label{thm:suho}
 Let $M'$ and $M$ be LPMs on $[n]$ without loops or coloops. The following are equivalent:
 \begin{itemize}
  \item[(i)] $M'\leq_Q M$,
  \item[(ii)] $M'\trianglelefteq_Q M$,
  \item[(iii)] every interval of $M$ can be expressed as union of intervals of $M'$.
 \end{itemize}
\end{theorem}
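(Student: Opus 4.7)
The equivalence $(i) \Leftrightarrow (ii)$ is already in hand: $(i) \Rightarrow (ii)$ is Corollary~\ref{cor:realizable}, and $(ii) \Rightarrow (i)$ is immediate from the definition of a realizable quotient. The real work is $(i) \Leftrightarrow (iii)$. Both conditions are transitive in $M$, for $\leq_Q$ classically and for the interval decomposition of $(iii)$ as the set-theoretic fact that a union of unions is a union. Hence, using Theorem~\ref{thm:lpm_quotients} to factor any LPM quotient as a chain of single-good-pair steps (via Proposition~\ref{prop:goodpair} and Lemma~\ref{lem:greedypairing}), both directions reduce to the case of a single good pair removal $M' = M[U \setminus \{u_j\}, L \setminus \{\ell_i\}]$. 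Intermediate matroids in the chain may carry loops or coloops, but the set-theoretic content of $(iii)$ is unaffected.

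For the forward direction $(i) \Rightarrow (iii)$ in the single-step case, I would set $p := u_j - j + 1$ and $q := \ell_i - i + 1$, the positions at which $u_j$ and $\ell_i$ are respectively inserted into $\overline{U'}$ and $\overline{L'}$; the good pair condition translates to $p \leq q$. A direct computation then gives the column-intervals of $M'$ as those of $M$ outside the index range $[p, q]$ together with new column-intervals $[\overline{\ell}_p, u_j], [\overline{\ell}_{p+1}, \overline{u}_p], \ldots, [\ell_i, \overline{u}_{q-1}]$ (collapsing to the single $[\ell_i, u_j]$ when $p = q$), and a parallel description for row-intervals around the index range $[i, j]$. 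Case-by-case one verifies that each column-interval of $M$ in the middle range chains as a union of adjacent column-intervals of $M'$, and that each row-interval $[\ell_m, u_m]$ of $M$ with $m \in [i, j]$ decomposes into one new row-interval of $M'$ plus a short run of column-intervals of $M'$ filling the gap between $\ell_m$ and $\ell_{m+1}$ in $\overline{L'}$. Loop-freeness of $M'$ supplies the chain condition $\overline{\ell}'_{t+1} \leq \overline{u}'_t + 1$ used at each step.

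For $(iii) \Rightarrow (i)$, setting aside the trivial case $M = U_{n-1,n}$, Observation~\ref{obs:columnsbecolumns} forces every decomposition of a column-interval of $M$ to use only column-intervals of $M'$. Matching left and right endpoints yields $\overline{L} \subseteq \overline{L'}$ and $\overline{U} \subseteq \overline{U'}$, equivalently $L' \subseteq L$ and $U' \subseteq U$, giving the inclusion half of the criterion in Theorem~\ref{thm:lpm_quotients}. To establish the goodness of the greedy pairing $((\ell_{j_r}, u_{i_r}))_{r=1}^{z}$, I plan to show that the chain condition inherited from decomposing each column-interval of $M$, together with $\overline{u}'_t > \overline{\ell}'_t$ (loop-freeness of $M'$), forces for each $r$ the position of $u_{i_r}$ in $\overline{U'}$ to be at most the position of $\ell_{j_r}$ in $\overline{L'}$. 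Setting $p_r := u_{i_r} - i_r + 1$ and $q_r := \ell_{j_r} - j_r + 1$, this is exactly $p_r \leq q_r$, the good pair condition on the $r$-th pair. Theorem~\ref{thm:lpm_quotients} then yields $M' \leq_Q M$.

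The main obstacle will be the detailed single-step case of $(i) \Rightarrow (iii)$, specifically the decomposition of the row-intervals $[\ell_m, u_m]$ of $M$ for $m \in [i, j]$: these must be assembled from both row- and column-intervals of $M'$, and identifying the column-intervals of $M'$ that fill the gap between $\ell_m$ and $\ell_{m+1}$ in $\overline{L'}$ requires careful bookkeeping of index shifts and a clear handling of the tight case $p = q$ versus the strict case $p < q$. On the reverse side, rigorously translating the combinatorial chain condition into the numerical inequality $p_r \leq q_r$ requires invoking loop-freeness at the correct places to rule out singleton column-intervals of $M'$, which would otherwise disrupt the matching between endpoints of $\overline{L}, \overline{U}$ and those of $\overline{L'}, \overline{U'}$.
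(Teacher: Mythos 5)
Your handling of $(i)\Leftrightarrow(ii)$ and the single-step reduction for $(i)\Rightarrow(iii)$ essentially mirror the paper. The gap is in $(iii)\Rightarrow(i)$: you assert that the chain condition from decomposing column-intervals of $M$ forces $p_r\leq q_r$, and that ``this is exactly the good pair condition on the $r$-th pair.'' It is not. By Definition~\ref{def:good_pair} a good pair $(\ell_{j_r},u_{i_r})$ requires \emph{two} things: the index condition $j_r\leq i_r$ and the position inequality $u_{i_r}-\ell_{j_r}\leq i_r-j_r$ (the latter is your $p_r\leq q_r$). Column-intervals alone do not recover the index condition, and the paper knows this: its Case~1 of the $(iii)\Rightarrow(i)$ argument treats a violation $j_s>i_s$ by examining the \emph{row}-interval $[\ell_{j_s},u_{j_s}]$ of $M$, reserving column-intervals for the case where the index condition holds but the position inequality fails.

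A concrete witness on $[8]$: let $M=M[1234,5678]=U_{4,8}$ and $M'=M[134,568]$. Both are loop- and coloop-free, $U'\subseteq U$, $L'\subseteq L$, and every column-interval of $M$ is a union of column-intervals of $M'$: $[1,5]=[1,2]\cup[2,5]$, $[2,6]=[2,5]\cup[3,6]$, $[3,7]=[3,6]\cup[4,7]$, $[4,8]=[4,7]\cup[7,8]$. Here $U\setminus U'=\{u_2\}$, $L\setminus L'=\{\ell_3\}$, so $p_1=u_2-2+1=1\leq 5=\ell_3-3+1=q_1$ and your test is passed; yet $j_1=3>2=i_1$, so $(\ell_3,u_2)$ is a bad pair and $M'\not\leq_Q M$ by Theorem~\ref{thm:lpm_quotients}. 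Indeed $(iii)$ really does fail, since the row-interval $[\ell_2,u_2]=\{6,7,8,1,2\}$ of $M$ cannot be written as a union of intervals of $M'$ --- but your column-interval-only plan would miss this and certify the implication falsely. You therefore need to invoke row-intervals of $M$ to establish the index condition, as the paper does. A secondary, smaller point: your single-step reduction for $(i)\Rightarrow(iii)$ leans on transitivity of $(iii)$ through intermediate matroids whose intervals may be undefined if they carry loops or coloops; the paper's ``it is sufficient to assume $\rank(M')=\rank(M)-1$'' faces the same implicit issue, so this is not unique to your proposal, but your dismissal of it as unaffecting ``the set-theoretic content'' would need actual justification.
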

\begin{proof}~

\noindent (i) $\Rightarrow$ (ii): This is the content of Corollary~\ref{cor:realizable}.

\noindent (ii) $\Rightarrow$ (i): This follows by definition.

 \noindent (iii) $\Rightarrow$ (i): 
 If $M=U_{n-1,n}$, then all coloop-free matroids on $[n]$ are quotients of $M$ and we are done. 
 Now let $M'=M[U',L']$, $M=M[U,L]$ with $M\neq U_{n-1,n}$ and assume that (iii) holds.  In order to prove (i) we will show that $U'\subseteq U, L'\subseteq L$ and that the greedy pairing, as given in Definition \ref{def:good_pairing}, is good.
 
Denote $[n]\setminus U=\overline{U}=\{\overline{u}_1<\cdots< \overline{u}_{n-k}\}$, $[n]\setminus L=\overline{L}=\{\overline{\ell}_1<\cdots <\overline{\ell}_{n-k'}\}$, $[n]\setminus U'=\overline{U'}=\{\overline{u'}_1<\cdots< \overline{u'}_{n-k'}\}$, and $[n]\setminus L'=\overline{L'}=\{\overline{\ell}_1'<\cdots <\overline{\ell}_{n-k'}'\}$. By hypothesis, and using Observation~\ref{obs:columnsbecolumns}, every column-interval $[\overline{\ell},\overline{u}]$ of $M$ can expressed as union of intervals $\bigcup_{s=1}^t[\overline{\ell'}_{i_s},\overline{u'}_{i_s}]$ in $M'$, where each of the intervals $[\overline{\ell'}_{i_s},\overline{u'}_{i_s}]$ is a column-interval of $M'$. In particular, $\overline{\ell'}_{i_s}=\overline{\ell}$ and $\overline{u'}_{i_t}=\overline{u}$. Hence, $\overline{\ell}\in\overline {L'}$ and $\overline{u}\in \overline {U'}$. Therefore, $\overline{U'}\supseteq \overline{U}$ and $\overline{L'}\supseteq \overline{L}$ and hence $U'\subseteq U$ and $L'\subseteq L$. Moreover, we have $\rank(M')\leq \rank(M)$ and $M=M'$ if $\rank(M)=\rank(M')$.

 
Now, letting $U\setminus U'=\{u_{i_1}<\cdots< u_{i_z}\}$ and $L\setminus L'=\{\ell_{j_1}<\cdots< \ell_{j_z}\}$, take the greedy pairing $((\ell_{j_1},u_{i_1}),\ldots, (\ell_{j_z},u_{i_z}))$. In order to prove (i), it suffices to show that the greedy pairing is good, by Theorem \ref{thm:lpm_quotients}. Suppose that this is not the case, and assume that $(\ell_{i_s},u_{j_s})$ is the first bad pair in this list. Following Definition~\ref{def:good_pair} there are two cases to consider that make $(\ell_{i_s},u_{j_s})$ bad.
 
\emph{Case 1}: If the step $\ell_{i_s}$ is above the step $u_{j_s}$, i.e., $j_s<i_s$, consider the row-interval $[\ell_{j_s},u_{j_s}]$ in $M$. By the choice of $(\ell_{i_s},u_{j_s})$ it follows that $\ell_{j_s}\in L'$. Hence, in $M'$ there is no column-interval beginning with $\ell_{j_s}$. Thus, in order to represent $[\ell_{j_s},u_{j_s}]$ as union of intervals in $M'$, the row-interval of $M'$ starting with $\ell_{j_s}=:\ell'_j\in L$ has to be used. This interval is the interval $[\ell'_j,u'_j]$. But then, since $u_{j_s}\notin U'$, we have that $u'_j>u_{j_s}$ and thus $[\ell'_j,u'_j]$ contains properly the interval $[\ell_{j_s},u_{j_s}]$. This contradicts the fact that $[\ell_{j_s},u_{j_s}]$ is union of intervals in $M'$.
 
\emph{Case 2}: If the step $\ell_{i_s}$ is to the left of the step $u_{j_s}$. Let $\overline{\ell}$ be the smallest element in $\overline{L}$ larger than $\ell_{i_s}$. Graphically, $\overline{\ell}$ is the first east step after $\ell_{i_s}$ in the southern boundary of the diagram of $M$. Thus $\overline{\ell}$ determines the column-interval $[\overline{\ell},\overline{u}]$ in $M$. In order to express this interval as union of intervals in $M'$, Observation~\ref{obs:columnsbecolumns} tells us that only column-intervals in $M'$ can be used. In particular, since $\overline{\ell}\in \overline{L'}$, the column-interval $[\overline{\ell}, {\overline u'}]$ of $M'$ has to be used, where $\overline{u'}\in\overline U'$. However, $\overline {u'}>\overline u$ as $\overline{\ell}>\ell_{i_s}$ and the step $\ell_{i_s}$ becomes horizontal in $M'$ making the containment $[\overline{\ell},\overline{u}]\subsetneq[\overline{\ell},\overline {u'}]$ proper.  As in Case 1, this contradicts the fact that $[\overline{\ell},\overline{u}]$ is a union of intervals in $M'$.

Thus we conclude that $M'$ is a quotient of $M$.
 
 \noindent (i) $\Rightarrow$ (iii): Let $M'\leq_Q M$. It is sufficient to assume that $\rank(M')=\rank(M)-1$. Hence, by Theorem~\ref{thm:lpm_quotients} there is a good pair $(\ell,u)$ such that $U'=U\setminus\{ u\}$ and $L'=L\setminus\{ \ell\}$. Let $[\overline{\ell},\overline{u}]$ be a column-interval of $M$ and let us prove that it can be written as union of intervals in $M'$. Since the pair $(\ell,u)$ is good, in the diagram of $M'$, steps $\ell$ and $u$ become horizontal and thus the horizontal step $\overline{u}$ appears weakly to the right of $\overline{\ell}$ in $M'$. Hence, we can write the interval  $[\overline{\ell},\overline{u}]$ as union of column-intervals $\bigcup_{s=1}^t[\overline{\ell'}_{i_s},\overline{u'}_{i_s}]$ in $M'$ in such a way that $\overline{\ell'}_{i_1}=\overline{\ell}$ and $\overline{u'}_{i_t}=\overline{u}$. In this way, every column-interval of $M$ can be written as required in $M'$. See the dark grey interval in Figure~\ref{fig:intervals}.

\begin{figure}[htp]
    \centering
    \includegraphics[width=.9\textwidth]{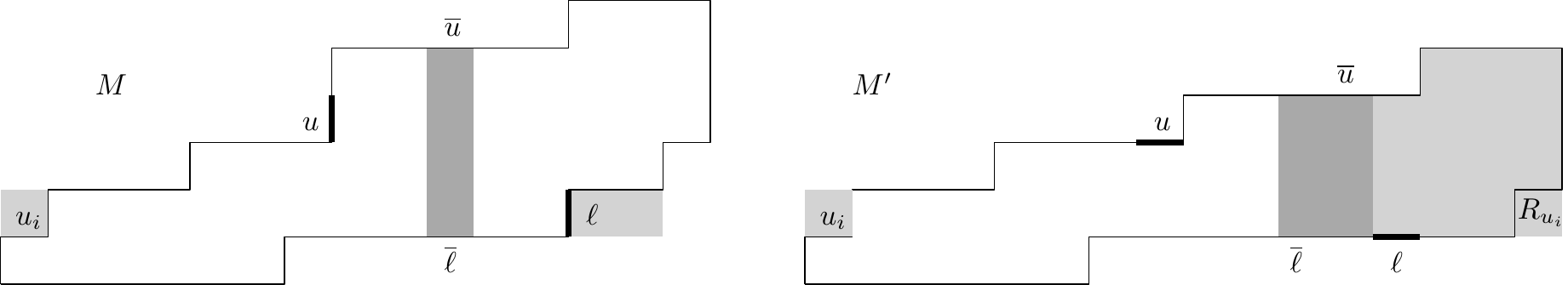}
    \caption{Representing an interval of $M$ as union of intervals of $M'$.}\label{fig:intervals}
\end{figure}

Now, consider a row-interval $[\ell_i,u_i]$ in $M$. If $\ell_i=\ell$ then $\ell_i\in\overline {L'}$ and we take the column-intervals in $M'$ of the form $[\overline {\ell'}, \overline {u'}]$ where $\overline {\ell'}\in\overline {L'}$ and $\overline{\ell'}\geq\ell_i$. Consider the union of these intervals and, if $u_i\neq u$, further join the unique row-interval $R_{u_i}$ of $M'$ with end-point $u'$. This yields $[\ell_i,u_i]$. See the light grey interval in Figure~\ref{fig:intervals}.
Reasoning in an analogous way, if $u_i=u$ we obtain $[\ell_i,u_i]$ as union of the column-intervals in $M'$ of the form $[\overline {\ell'}, \overline {u'}]$ where $\overline {u'}\in\overline {U'}$ and $\overline {u'}\leq u_i$, along with the unique  row-interval $R_{\ell_i}$ of $M'$ whose initial point is $\ell_i$. If $\ell'\neq\ell$ and $u'\neq u$, then we take in $M'$ the union of the row-intervals $R_{\ell_i}\cup R_{u_i}$.
The result follows.
 \end{proof}

 \section{Further remarks}

\subsection{Properties of $\mathcal P_n$}
We have already explored some properties of the poset $\mathcal P_n$. 
Often, the techniques developed to answer enumerative properties of a poset like $\mathcal P_n$  lead to unforseen connections in mathematics. 
Hence we are interested in the following questions.
\begin{question}\label{quest:unimodal}
   Are rank functions of intervals of $\mathcal P_n$ unimodal?                                \end{question}
Note that through Corollary~\ref{cor:unimodular} we know that the answer is positive for the entire poset $\mathcal P_n$. Theorem~\ref{thm:lpm_quotients} together with Lemma~\ref{lem:removing} shed some further light on the structure of the order complex of an interval $[M',M]_Q$ in $\mathcal P_n$. The idea is that if $((\ell_{i_1},u_{j_1}), \ldots, (\ell_{i_z},u_{j_z}))$ is the greedy pairing on  $(U\setminus U',L\setminus L')$ then any permutation of the set of pairs $\{(\ell_{i_1},u_{j_1}), \ldots, (\ell_{i_z},u_{j_z})\}$ gives rise to a sequence that is a good pairing. That is, every such permutation corresponds to a maximal chain in the interval $[M',M]_Q$.  However, not all maximal chains arise this way. For instance the interval $[U_{1,3},U_{3,3}]_Q$ in $\mathcal P_3$ has 3 maximal chains, two of which come as permutations of the set $\{(1,2),(2,3) \}$, The third chain corresponds to the sequence $((1,3), (2,2))$. Notice that $((2,2), (1,3))$ is not a good pairing on $(12,23)$ as $(1,3)$ is not a good pair of $M=M[13,13]$. See Figure~\ref{fig:intervalP3}.  Along these lines, a better understanding of maximal chains in $\mathcal P_n$ could allow us to understand and explore shellability and Whitney duality, as defined in~\cite{GH}.

\begin{question}
    Is $\mathcal P_n$ shellable or does it have a Whitney dual?
\end{question}

\begin{figure}[htp]
    \centering
    \includegraphics[width=.5\textwidth]{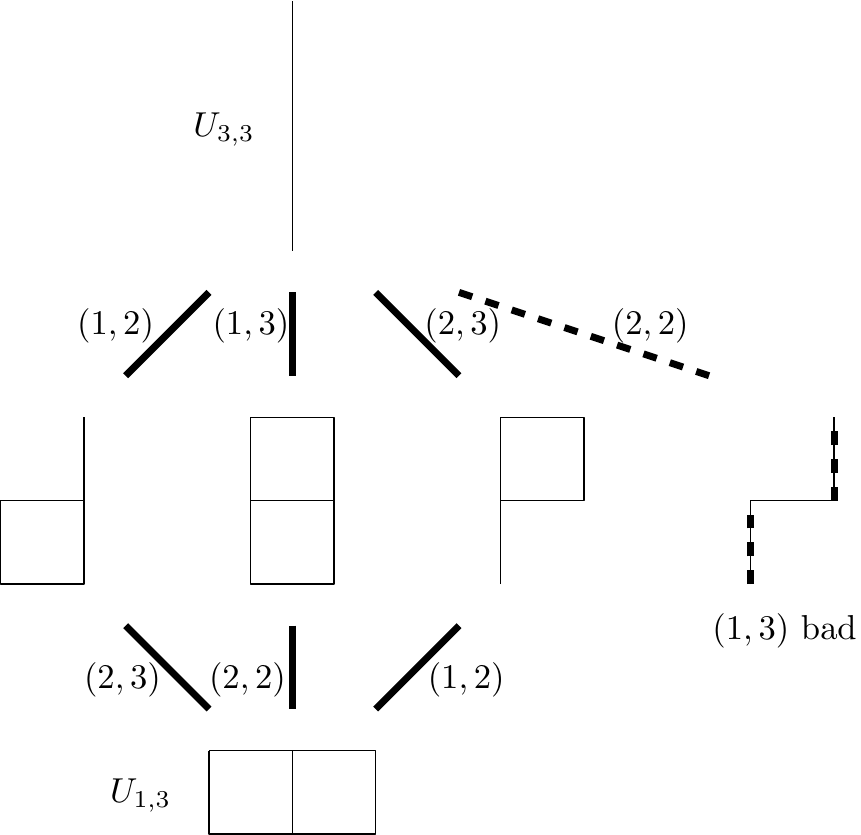}
    \caption{The interval $[U_{1,3},U_{3,3}]_Q$ in $\mathcal P_3$.}\label{fig:intervalP3}
\end{figure}

\subsection{Towards LPM flag diagrams}\label{sec:diagrams}
In~\cite{DeM-07} a certain class of (partial) LPFMs was studied, i.e., $\mathcal{F}:(M_0, M_1, \ldots, M_k)$ such that $U_{0,n}=M_0\leq_Q\ldots\leq_Q M_k=U_{n,n}$ where all components are LPMs and $k\leq n$. Given a flag of bases $\mathfrak{B}=(B_0, B_1, \ldots, B_k)$ in $\mathcal{F}$,  one can associate a monotone path $P$ of length $n$ in $\mathbb{Z}^{k}$ by setting the $i$th step to $e_j$ if $i\in B_j\setminus B_{j-1}$ for all $1\leq i\leq k$. Note that if $\mathcal{F}=(U_{0,n}, M, U_{n,n})$ where  $M=M[U,L]$, then the set of paths obtained this way just corresponds to the paths in the diagram of $M[U,L]$. It is thus natural to define 
the \emph{diagram} $D_{\mathcal{F}}$ of $\mathcal{F}$ as the set of points in $\mathbb{Z}^{k}$ that are on paths associated to flags of bases of $\mathcal{F}$. See Figure~\ref{fig:anna} for an example.
\begin{problem}
(a) Characterize the set of diagrams of LPFMs. (b) Characterize those paths in a diagram that correspond to flags. Are these all the monotone ones?
\end{problem}
This question is already present in~\cite[Figure 6]{DeM-07}, where an example shows that already pretty reasonable sets in $\mathbb{Z}^3$ are not the diagram of an LPFM. We hope that the results of the present paper allow to shed new light on this problem.

\begin{figure}[htp]
    \centering
    \includegraphics[width=.5\textwidth]{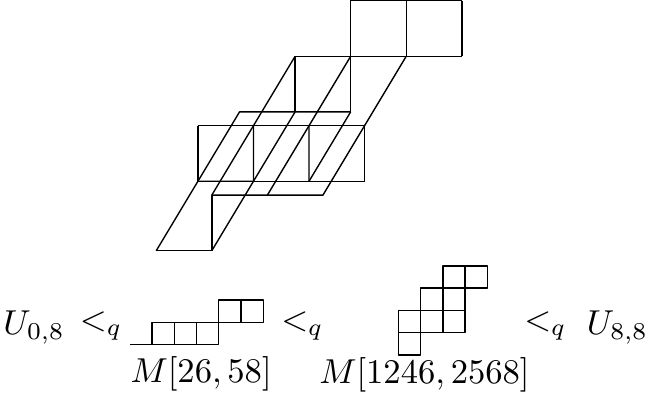}
    \caption{An LPFM and its diagram.}\label{fig:anna}
\end{figure}

{

\subsection{Weak order and Higgs lift}\label{sec:Higgs}

Let $\mathcal M_{k,n}$ be the collection of matroids over the set $[n]$ of fixed rank $k$. This collection is endowed with a partial ordering $\leq_W$, known as the \emph{(rank-preserving) weak order} given as follows: if $M',M\in\mathcal M_{k,n}$ then $M\leq_W M'$ if and only if  every basis of $M'$ is a basis of $M$. See \cite[Prop. 7.4.7]{Bry-86} for several cryptomorphic descriptions of the rank-preserving weak order relation.

In the case of LPMs, the weak order corresponds to diagram containment. That is, if $\mathcal{L}_{k,n}$ denotes the set of LPMs of rank $k$ over $[n]$ and  $M'=M[U',L'], M=M[U,L]\in \mathcal{L}_{k,n}$ then $M'\leq_W M$ if and only if $U'\geq_G U$ and  $L'\leq_G L$. 

Since $({[n] \choose k}, \leq_G)$ has a lattice structure, by Observation~\ref{obs:weak} we have that $(\mathcal{L}_{k,n},\leq_W)$ becomes an upper semilattice by setting the join $M[U,L]\vee M[U',L']:=M[U\wedge_G U',L\vee_G L']$. In particular, since $\leq_G$ is a distributive lattice, intervals in $(\mathcal{L}_{r,n},\leq_W)$ are distributive lattices, as well. Also maxima and minima are easily determined as we now state.

\begin{observation}\label{obs:orderonLPMS}
The poset $(\mathcal{L}_{k,n},\leq_W)$ is isomorphic to the upper semilattice of intervals of the Gale order $({[n] \choose k}, \leq_G)$ ordered by inclusion. Its unique maximum is $U_{k,n}$. It has ${n\choose k}$ minima corresponding to the elements of ${[n] \choose k}$. 
\end{observation}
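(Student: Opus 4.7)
The plan is to produce an order-preserving bijection
\[
\Phi:(\mathcal{L}_{k,n},\leq_W)\longrightarrow (\Ind_{k,n},\subseteq),
\]
where $\Ind_{k,n}$ denotes the collection of nonempty intervals $[U,L]_G$ of the Gale order $({[n]\choose k},\leq_G)$. The natural candidate is $\Phi(M[U,L])=[U,L]_G$. By Definition~\ref{def:lpm}, the pair $(U,L)$ appearing in the description of any LPM in $\mathcal{L}_{k,n}$ satisfies $U\leq_G L$, so $[U,L]_G$ is a nonempty interval; this shows $\Phi$ is well defined. Conversely, any interval $[U,L]_G$ in ${[n]\choose k}$ with $U\leq_G L$ is the base set of the LPM $M[U,L]$, and the pair $(U,L)$ is recovered from the LPM as the $\leq_G$-minimum and $\leq_G$-maximum of its base set. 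Hence $\Phi$ is a bijection.

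Next I would verify that $\Phi$ is an order isomorphism by invoking the characterization stated right before the observation: for $M'=M[U',L']$ and $M=M[U,L]$ in $\mathcal{L}_{k,n}$,
\[
M'\leq_W M \iff U\leq_G U' \text{ and } L'\leq_G L.
\]
Unwinding, the right-hand side says exactly that $[U',L']_G\subseteq [U,L]_G$, i.e.\ $\Phi(M')\subseteq \Phi(M)$. Thus $\Phi$ is an isomorphism of posets, which transports the upper semilattice structure on $\Ind_{k,n}$ (with join $[U,L]_G\vee[U',L']_G=[U\wedge_G U',L\vee_G L']_G$) to the upper semilattice structure on $(\mathcal{L}_{k,n},\leq_W)$ already exhibited in the paragraph preceding the statement.

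Finally I would read off the maximum and minima from the interval picture. The unique maximum in $(\Ind_{k,n},\subseteq)$ is the full set ${[n]\choose k}=[\{1,\ldots,k\},\{n-k+1,\ldots,n\}]_G$, which corresponds via $\Phi^{-1}$ to the LPM with all $k$-subsets as bases, namely $U_{k,n}$. The minimal elements of $(\Ind_{k,n},\subseteq)$ are the singleton intervals $[B,B]_G$ for $B\in{[n]\choose k}$, of which there are exactly ${n\choose k}$; each corresponds under $\Phi^{-1}$ to the LPM $M[B,B]$ whose only basis is $B$. No step here presents a real obstacle: the only content is in checking the equivalence $M'\leq_W M\iff U\leq_G U'\text{ and }L'\leq_G L$ already recorded in the excerpt, and the rest is a straightforward unpacking of definitions.
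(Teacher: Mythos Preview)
Your proposal is correct and follows exactly the approach the paper has in mind: the observation is stated without a separate proof, being an immediate consequence of the preceding paragraph, and your map $\Phi(M[U,L])=[U,L]_G$ together with the equivalence $M'\leq_W M\iff U\leq_G U'$ and $L'\leq_G L$ is precisely the content the authors are packaging. There is nothing to add.
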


One can wonder how the weak order and the quotient relation interact.
In Figure~\ref{fig:quotients} we illustrate all the LPMs that belong to the interval $[U_{0,8},M]_Q$, where $M=M[1246,2568]$. That is,  $N\in[U_{0,8},M]_Q$ if and only if $N$ is an LPM and $N\leq_Q M$. Matroids in this interval that have the same rank have been ordered using $\leq_W$. Notice also that although $M[12,58] <_W M[12,68]$ and $M[12,68]\leq_Q M[124,268]$, it does not follow that $M[12,58]$ is a quotient of $M[124,268]$. Thus, the union of quotient relation and rank preserving weak order is not an order relation.

Given two matroids $M'$ and $M$ such that $M'\leq_Q M$, we say that a matroid $N$ is the $i$th \emph{Higgs lift} of $M'$ towards $M$ if $N$ is the maximal matroid (with respect to $\leq_W$) such that $r(N)=r(M')+i$ and $M'\leq_Q N\leq_Q M$. See ~\cite[Propositions 2.2, 2.6]{BS11} and~\cite{BCN21} for the proof that the Higgs lift always exists. Notice that the $i$th {Higgs lift} of $U_{0,n}$ towards $M$ is simply the $r-i$-truncation of $M$ if $M$ has rank $r$.
With the above notation one can now wonder if a given class of matroids $\mathcal C$ is closed under taking Higgs lifts, where we would generalize the notion in the following sense. That is, if $M'\leq _q M$ are in $\mathcal C$ and $i\leq r(M)-r(M')$, then $N\in \mathcal{C}$ is a \emph{Higgs lift} of $M',M$ if $N$ is the unique maximal (with respect to $\leq_W$) $N\in\mathcal C$  such that $r(N)=r(M')+i$ and $M'\leq_Q N\leq_Q M$. However, in general there exists no Higgs lift within the class of LPMs: going back to Figure~\ref{fig:quotients}, we see that there is no unique maximum with respect to $ \leq_W$ among the rank $3$ LPMs in the interval $[U_{0,8},M[1246,2568]]$.

\begin{figure}[htp]
    \centering
    \includegraphics[width=\textwidth]{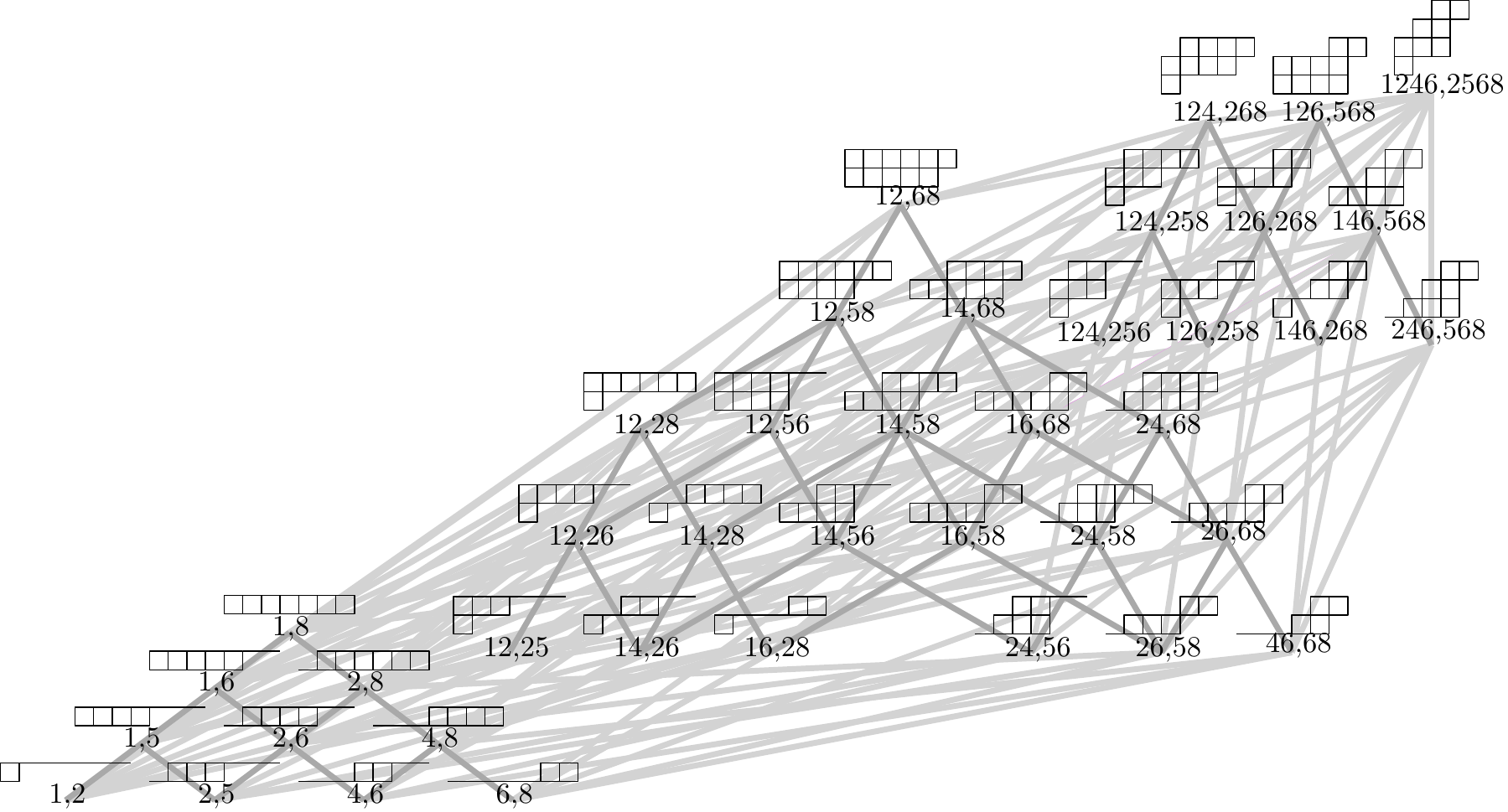}
    \caption{The LPM $M[1246,2568]$ and its (non-trivial) quotients. Each rank is ordered with respect to the weak order, the corresponding cover relations are in dark gray. Quotient cover relations are in light gray.}\label{fig:quotients}
\end{figure}

\begin{observation}
 The class of LPMs is not closed under Higgs lifts.
\end{observation}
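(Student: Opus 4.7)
The plan is to verify the observation by exhibiting the counterexample already suggested by the figure preceding it: take $M=M[1246,2568]$ and $M'=U_{0,8}$. Since $M'\leq_Q M$ trivially, a $3$rd Higgs lift of $M'$ towards $M$ inside the class of LPMs would have to be the unique $\leq_W$-maximum among rank-$3$ LPMs $N$ with $U_{0,8}\leq_Q N\leq_Q M$. I will show that this maximum does not exist by producing two incomparable $\leq_W$-maximal elements.

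By Theorem~\ref{thm:lpm_quotients}, the rank-$3$ LPM quotients of $M$ are precisely the matroids $M[U\setminus\{u_j\},L\setminus\{\ell_i\}]$ obtained by removing a good pair $(\ell_i,u_j)$. With $U=\{1,2,4,6\}$ and $L=\{2,5,6,8\}$, a direct check of the inequality $\max(0,u_j-\ell_i)\leq j-i$ in Definition~\ref{def:good_pair} shows that the only bad pair with $i\leq j$ is $(\ell_1,u_4)=(2,6)$, yielding nine rank-$3$ LPM quotients. Among these I would single out $N_1:=M[126,568]$ (removing the good pair $(\ell_1,u_3)=(2,4)$) and $N_2:=M[124,268]$ (removing $(\ell_2,u_4)=(5,6)$). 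Using the characterization $M[U,L]\leq_W M[U',L']\iff U\geq_G U'\text{ and }L\leq_G L'$, a short check shows that $N_1$ and $N_2$ are incomparable and that each of them dominates one branch of the remaining seven quotients under $\leq_W$, hence they are both maximal.

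The final step is to note that a common $\leq_W$-upper bound $M[U^\ast,L^\ast]$ inside $\mathcal{L}_{3,8}$ would have to satisfy $U^\ast\leq_G\{1,2,4\}=\{1,2,6\}\wedge_G\{1,2,4\}$ and $L^\ast\geq_G\{5,6,8\}=\{5,6,8\}\vee_G\{2,6,8\}$. Since moreover $U^\ast\subseteq\{1,2,4,6\}$ and $L^\ast\subseteq\{2,5,6,8\}$ are forced by Lemma~\ref{lem:containment}, the only candidate is $M[124,568]$. But its greedy pair is exactly $(2,6)$, the unique bad pair of $M$, so Theorem~\ref{thm:lpm_quotients} tells us $M[124,568]\not\leq_Q M$. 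Consequently no Higgs lift exists within $\mathcal{L}_{3,8}$, and the observation follows. I do not expect a serious obstacle: the argument is a finite verification already made transparent by Figure~\ref{fig:quotients}, and the conceptual content is simply that the single bad pair $(2,6)$ simultaneously forces the two maximal elements to be incomparable and denies them a common LPM upper bound.
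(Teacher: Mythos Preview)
Your argument is correct and follows exactly the paper's approach: the paper also uses $M=M[1246,2568]$ and $M'=U_{0,8}$, pointing to Figure~\ref{fig:quotients} to observe that the rank-$3$ LPM quotients have no unique $\leq_W$-maximum. You have simply made this verification explicit, correctly identifying the two maximal elements $M[126,568]$ and $M[124,268]$ and tracing the obstruction to the single bad pair $(\ell_1,u_4)=(2,6)$.
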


Recall that another question that remains open about the different ranks of the quotient order of LPMs is whether they are unimodal on any interval $[M', M]_Q$ (see Question~\ref{quest:unimodal}). Note that we have answered this in the positive for the entire poset $\mathcal{P}_n$ itself.

}

\subsubsection*{Acknowledgments.}
We thank Lauren Williams for insightful conversations and Suho Oh for sharing their conjecture with us.
Benedetti was supported by grant FAPA of the Faculty of Science at Universidad de los Andes.
Knauer was  supported by the Spanish State Research Agency
through grants RYC-2017-22701, PID2019-104844GB-I00 and the Severo Ochoa and María de Maeztu Program for Centers and Units of Excellence in R\&D (CEX2020-001084-M).
 We also thank the CIMPA-ICTP Research In Pairs program.

%
%

%
%

%
%
%

\bibliography{bibliografia}

\providecommand{\bysame}{\leavevmode\hbox to3em{\hrulefill}\thinspace}
\providecommand{\MR}{\relax\ifhmode\unskip\space\fi MR }
\providecommand{\MRhref}[2]{%
  \href{http://www.ams.org/mathscinet-getitem?mr=#1}{#2}
}
\providecommand{\href}[2]{#2}
\begin{thebibliography}{KMSA18}

\bibitem[AJK20]{An-17}
Suhyung {An}, JiYoon {Jung}, and Sangwook {Kim}, \emph{{Facial structures of
  lattice path matroid polytopes}}, {Discrete Math.} \textbf{343} (2020),
  no.~1, 11, Id/No 111628.

\bibitem[ARW17]{ARW17}
Federico {Ardila}, Felipe {Rinc\'on}, and Lauren {Williams}, \emph{{Positively
  oriented matroids are realizable}}, {J. Eur. Math. Soc. (JEMS)} \textbf{19}
  (2017), no.~3, 815--833.

\bibitem[BCN21]{BCN21}
Joseph~E. {Bonin}, Carolyn {Chun}, and Steven~D. {Noble}, \emph{{Delta-matroids
  as subsystems of sequences of Higgs lifts}}, {Adv. Appl. Math.} \textbf{126}
  (2021), 27, Id/No 101910.

\bibitem[BCT22]{BCT}
Carolina {Benedetti}, Anastasia {Chavez}, and Daniel {Tamayo}, \emph{{Quotients
  of uniform matroids}}, {Elec. J. Comb.} \textbf{29} (2022), no.~1.

\bibitem[Bd06]{Bon-06}
Joseph~E. {Bonin} and Anna {de Mier}, \emph{{Lattice path matroids: structural
  properties}}, {Eur. J. Comb.} \textbf{27} (2006), no.~5, 701--738.

\bibitem[BdMN03]{Bon-03}
Joseph~E. Bonin, Anna de~Mier, and Marc Noy, \emph{Lattice path matroids:
  enumerative aspects and {T}utte polynomials}, J. Combin. Theory Ser. A
  \textbf{104} (2003), no.~1, 63--94.

\bibitem[BEW22]{BEW22}
Jonathan Boretsky, Christopher Eur, and Lauren Williams, \emph{Polyhedral and
  tropical geometry of flag positroids}, arXiv:2208.09131 (2022).

\bibitem[BG07]{Bon-07}
Joseph~E. Bonin and Omer Gim{\'e}nez, \emph{Multi-path matroids}, Combin.
  Probab. Comput. \textbf{16} (2007), no.~2, 193--217.

\bibitem[BGW03]{BGW}
Alexandre~V. Borovik, Israel~M. Gel'fand, and Neil White, \emph{{Coxeter
  Matroids}}, vol. 216, London: Birkh\"auser, 2003.

\bibitem[{Bid}12]{Bid-12}
Hoda {Bidkhori}, \emph{{Lattice Path Matroid Polytopes}}, arXiv:1212.5705
  (2012).

\bibitem[BKVP23]{BKV23}
Carolina Benedetti, Kolja Knauer, and Jerónimo Valencia-Porras, \emph{On
  lattice path matroid polytopes: alcoved triangulations and snake
  decompositions}, arXiv:2303.10458 (2023).

\bibitem[{Blu}01]{B07}
Stefan {Blum}, \emph{{Base-sortable matroids and Koszulness of semigroup
  rings}}, {Eur. J. Comb.} \textbf{22} (2001), no.~7, 937--951.

\bibitem[Bon10]{Bon-10}
Joseph~E. Bonin, \emph{Lattice path matroids: the excluded minors}, J. Combin.
  Theory Ser. B \textbf{100} (2010), no.~6, 585--599.

\bibitem[Bor22]{Bor22}
Jonathan Boretsky, \emph{Positive tropical flags and the positive tropical
  {Dressian}}, S{\'e}min. Lothar. Comb. \textbf{86B} (2022), 12 (English),
  Id/No 86.

\bibitem[{Bry}86]{Bry-86}
Thomas {Brylawski}, \emph{{Constructions}}, {Theory of matroids, Encycl. Math.
  Appl. 26, 127-223 (1986).}, 1986.

\bibitem[BS11]{BS11}
Joseph~E. {Bonin} and William~R. {Schmitt}, \emph{{Splicing matroids}}, {Eur.
  J. Comb.} \textbf{32} (2011), no.~6, 722--744.

\bibitem[BS22]{black2022flag}
Alexander~E. Black and Raman Sanyal, \emph{Flag polymatroids}, arXiv:2207.12221
  (2022).

\bibitem[CDMS22]{am2018flag}
Amanda Cameron, Rodica Dinu, Mateusz Micha{\l}ek, and Tim Seynnaeve, \emph{Flag
  matroids: algebra and geometry}, Interactions with lattice polytopes.
  Selected papers based on the presentations at the workshop, Magdeburg,
  Germany, September 14--16, 2017, Cham: Springer, 2022, pp.~73--114 (English).

\bibitem[CO22]{corey2022initial}
Daniel Corey and Jorge~Alberto Olarte, \emph{Initial degenerations of flag
  varieties}, arXiv:2207.08094 (2022).

\bibitem[CRA11]{Cha-11}
Vanessa Chatelain and Jorge~Luis Ram{\'{\i}}rez~Alfons{\'{\i}}n, \emph{Matroid
  base polytope decomposition}, Adv. in Appl. Math. \textbf{47} (2011), no.~1,
  158--172.

\bibitem[DD15]{Del-12}
Emanuele {Delucchi} and Martin {Dlugosch}, \emph{{Bergman complexes of lattice
  path matroids.}}, {SIAM J. Discrete Math.} \textbf{29} (2015), no.~4,
  1916--1930.

\bibitem[{de }07]{DeM-07}
Anna {de Mier}, \emph{{A natural family of flag matroids}}, {SIAM J. Discrete
  Math.} \textbf{21} (2007), no.~1, 130--140.

\bibitem[dS87]{dS87}
Ilda~P.F. da~Silva (ed.), \emph{{Quelques propri\'et\'es des matroides
  orient\'es}}, Ph.D. Dissertation, Universit\'e Paris VI, 1987.

\bibitem[EHL23]{EHL23}
Christopher Eur, June Huh, and Matt Larson, \emph{Stellahedral geometry of
  matroids}, arXiv:2207.10605 (2023).

\bibitem[Fer22]{Fer22}
Luis Ferroni, \emph{On the {Ehrhart} polynomial of minimal matroids}, Discrete
  Comput. Geom. \textbf{68} (2022), no.~1, 255--273.

\bibitem[FJS22]{FJS22}
Luis Ferroni, Katharina Jochemko, and Benjamin Schr{\"o}ter, \emph{Ehrhart
  polynomials of rank two matroids}, Adv. Appl. Math. \textbf{141} (2022), 26,
  Id/No 102410.

\bibitem[FS22]{BS22}
Luis Ferroni and Benjamin Schröter, \emph{Valuative invariants for large
  classes of matroids}, arXiv:2208.04893 (2022).

\bibitem[GH21]{GH}
R.~{Gonzalez} and J.~{Hallam}, \emph{{The Whitney duals of a graded poset}},
  {JCTA} \textbf{177} (2021).

\bibitem[{Hig}68]{H68}
Denis~A. {Higgs}, \emph{{Strong maps of geometries}}, {J. Comb. Theory}
  \textbf{5} (1968), 185--191.

\bibitem[HP18]{HP18}
Chris {Heunen} and Vaia {Patta}, \emph{{The category of matroids}}, {Appl.
  Categ. Struct.} \textbf{26} (2018), no.~2, 205--237.

\bibitem[JL22]{jarra2022flag}
Manoel Jarra and Oliver Lorscheid, \emph{Flag matroids with coefficients},
  arXiv:2204.04658 (2022).

\bibitem[JLLO23]{Joswig_2023}
Michael Joswig, Georg Loho, Dante Luber, and Jorge~Alberto Olarte,
  \emph{Generalized permutahedra and positive flag dressians}, International
  Mathematics Research Notices (2023).

\bibitem[KMR18]{Kna-18}
Kolja {Knauer}, Leonardo {Mart\'{\i}nez-Sandoval}, and Jorge~Luis {Ram\'{\i}rez
  Alfons\'{\i}n}, \emph{{On lattice path matroid polytopes: integer points and
  Ehrhart polynomial}}, {Discrete Comput. Geom.} \textbf{60} (2018), no.~3,
  698--719.

\bibitem[KMSA18]{KMR}
Kolja Knauer, Leonardo Martínez-Sandoval, and Jorge Luis~Ramírez Alfonsín,
  \emph{A {T}utte polynomial inequality for lattice path matroids}, Advances in
  Applied Mathematics \textbf{94} (2018), no.~Supplement C, 23 -- 38, Special
  issue on the Tutte polynomial.

\bibitem[Kra15]{Kra15}
Christian Krattenthaler, \emph{Lattice path enumeration}, Handbook of
  enumerative combinatorics, Boca Raton, FL: CRC Press, 2015, pp.~589--678.

\bibitem[Kun86]{Kun86}
Joseph P.~S. Kung, \emph{Strong maps}, Theory of matroids, {Encycl}. {Math}.
  {Appl}. 26, 224-253 (1986)., 1986.

\bibitem[KW15]{KW15}
Yuji Kodama and Lauren Williams, \emph{The full {Kostant}-{Toda} hierarchy on
  the positive flag variety}, Commun. Math. Phys. \textbf{335} (2015), no.~1,
  247--283.

\bibitem[MSD19]{MOX19}
Robert {Mcalmon}, Oh~{Suho}, and Xiang {David}, \emph{{Flats of a positroid
  from its decorated permutation}}, {S\'emin. Lothar. Comb.} \textbf{82B}
  (2019), 82b.43, 12.

\bibitem[MT15]{Mor-13}
Jason {Morton} and Jacob {Turner}, \emph{{Computing the Tutte polynomial of
  lattice path matroids using determinantal circuits.}}, {Theor. Comput. Sci.}
  \textbf{598} (2015), 150--156.

\bibitem[{Nel}18]{N18}
Peter {Nelson}, \emph{{Almost all matroids are nonrepresentable}}, {Bull. Lond.
  Math. Soc.} \textbf{50} (2018), no.~2, 245--248.

\bibitem[NK09]{NK09}
Hirokazu {Nishimura} and Susumu {Kuroda} (eds.), \emph{{A lost mathematician,
  Takeo Nakasawa. The forgotten father of matroid theory}}, Basel:
  Birkh\"auser, 2009.

\bibitem[OX22]{OX22}
SuHo Oh and David Xiang, \emph{The facets of the matroid polytope and the
  independent set polytope of a positroid}, J. Comb. \textbf{13} (2022), no.~4,
  545--560.

\bibitem[{Oxl}11]{O11}
James~G. {Oxley}, \emph{{Matroid theory}}, vol.~21, Oxford: Oxford University
  Press, 2011.

\bibitem[Pad23]{padro2023efficient}
Carles Padró, \emph{Efficient representation of lattice path matroids},
  arXiv:2310.10489 (2023).

\bibitem[Pos06]{P06}
Alexander Postnikov, \emph{{Total positivity, Grassmannians, and networks}},
  {arXiv:0609764} (2006).

\bibitem[Sch10]{Sch-10}
Jay Schweig, \emph{On the {$h$}-vector of a lattice path matroid}, Electron. J.
  Combin. \textbf{17} (2010), no.~1, Note 3, 6.

\bibitem[Sch11]{Sch-11}
\bysame, \emph{Toric ideals of lattice path matroids and polymatroids}, J. Pure
  Appl. Algebra \textbf{215} (2011), no.~11, 2660--2665.

\bibitem[Sta01]{EC2}
Richard Stanley, \emph{{Enumerative Combinatorics Vol. 2}}, paperback ed. ed.,
  vol.~62, Cambridge: Cambridge University Press, 2001.

\bibitem[TW15]{TW15}
Emmanuel {Tsukerman} and Lauren {Williams}, \emph{{Bruhat interval polytopes}},
  {Adv. Math.} \textbf{285} (2015), 766--810.

\bibitem[{Whi}34]{W34}
Hassler {Whitney}, \emph{{On the abstract properties of linear dependence.}},
  {Bull. Am. Math. Soc.} \textbf{40} (1934), 663.

\end{thebibliography}
\bibliographystyle{amsalpha}

\end{document}